\documentclass{imsart}
	
	\startlocaldefs
	\usepackage{amsmath, amsthm, amsfonts, amssymb,bbm,dsfont,mathrsfs,yfonts}
\usepackage{boites} %boxes
\usepackage{graphicx,float}
\usepackage{mathalfa}
\usepackage[pagebackref=true]{hyperref,xcolor}

\usepackage{pagecolor}
\usepackage{enumerate}

	\newcommand{\td}{\widetilde}
	\renewcommand{\Re}{{\rm Re\ }}
	
	\newcommand{\maxx}{\vee}
	\newcommand{\minn}{\wedge}
	\usepackage{lipsum}
	\renewcommand{\phi}{\varphi}
	
	% Numbering
	\newcommand\numberthis{\addtocounter{equation}{1}\tag{\theequation}}

	\newcommand{\rt}[1]{\sqrt #1}
	
	% Undertilde
	% \usepackage{undertilde}
	% \newcommand{\ut}{\utilde}

	\newcommand\Item[1][]{%
	  \ifx\relax#1\relax  \item \else \item[#1] \fi
	  \abovedisplayskip=0pt\abovedisplayshortskip=0pt~\vspace*{-\baselineskip}}
% ------------------------------------------------------------------------
%fonts(fold)
%Script 

\newcommand{\scD}{{\mathscr D}}

% caligraphic

\newcommand{\cB}{{\mathcal B}}

\newcommand{\cF}{{\mathcal F}}

\newcommand{\cR}{{\mathcal R}}

\newcommand{\ba}{\begin{eqnarray}}
\newcommand{\na}{\end{eqnarray}}
\newcommand{\ban}{\begin{eqnarray*}}
	\newcommand{\nan}{\end{eqnarray*}}

%fraktur

% math blackboard

\newcommand{\C}{{\mathbb C}}
\newcommand{\bD}{{\mathbb D}}

\newcommand{\N}{\mathbb N}
\newcommand{\F}{\mathbb F}
\newcommand{\Em}{{\mathbb E}}

\newcommand{\Pm}{{\mathbb P}}
\newcommand{\Rm}{{\mathbb R}}
\newcommand{\R}{{\mathbb R}}

% \renewcommand{\A}{{\mathbb A}}
%(end)
%Special commands
\newcommand{\Var}{\mathrm{Var}}
\newcommand{\qand}{\quad\mathrm{and}\quad}

% greek

\newcommand{\Cov}{\mathrm{Cov}}

\newcommand{\norm}[1]{\lVert {#1} \rVert}
\newcommand{\normm}[1]{\left\lVert {#1} \right\rVert}

\newcommand{\<}{\langle}
\renewcommand{\>}{\rangle}

\newcommand{\ind}{\mathds{1}}
\newcommand{\indd}[1]{\mathds{1}_{\{#1\}}}

% Differentials

%Greeks

	\usepackage[numbers]{natbib}
	\usepackage{fullpage}
	\theoremstyle{plain}
	\newtheorem{theorem}{Theorem}[section]
	\newtheorem{lemma}[theorem]{Lemma}
	\newtheorem{proposition}[theorem]{Proposition}
	\newtheorem{corollary}[theorem]{Corollary}

	\newtheorem{assumption}{Assumptions}[section]

	\theoremstyle{definition}
	\newtheorem{definition}[theorem]{Definition}

	\theoremstyle{remark}
	\newtheorem{remark}{Remark}[section]
	\newtheorem*{remark*}{Remark}

	\numberwithin{equation}{section}

	\newcommand{\loc}{_{\rm loc}}
	
	\newcommand{\pf}[1]{\noindent{\text{#1}}}
	
	\newcommand{\map}{\mapsto}
	\renewcommand{\rt}[1]{\sqrt{#1}}

	\newcommand{\cS}{\mathcal S}

	\makeatletter
\newcommand*\bigcdot{\mathpalette\bigcdot@{.5}}
\newcommand*\bigcdot@[2]{\mathbin{\vcenter{\hbox{\scalebox{#2}{$\m@th#1\bullet$}}}}}
\makeatother
	\endlocaldefs

\begin{document}

	\begin{frontmatter}
		\title{A Continuous Time GARCH(p,q) Process with Delay}
		\runtitle{A Continuous Time GARCH(p,q) Process with Delay}
		
		\begin{aug}	
		\author{\fnms{Adam}  \snm{Nie}\corref{}\thanksref{t2}\ead[label=e1]{adam.nie@anu.edu.au}}
		\thankstext{t2}{RSFAS, ANU College of Business and Economics. Email:adam.nie@anu.edu.au }

		\runauthor{Adam Nie}

		\affiliation{The Australian National University}

		\address{RSFAS, CBE, The Australian National University,\\ 
		    \printead{e1}}
		\end{aug}

		\begin{abstract}	
			%!TEX root = ../main.tex

	We investigate the properties of a continuous time GARCH process as the solution to a L\'evy driven stochastic functional integral equation. 
	This process occurs as a weak limit of a sequence of discrete time GARCH processes as the time between observations converges to zero and the number of lags grows to infinity. The resulting limit generalizes the COGARCH process and can be interpreted as a COGARCH process with higher orders of lags.

	We give conditions for the existence, uniqueness and regularity of the solution to the integral equation, and derive a more conventional representation of the process in terms of a stochastic delayed differential equation. Path properties of the volatility process, including piecewise differentiability and positivity, are studied,  as well as second order properties of the process, such as uniform $L^1$ and $L^2$ bounds, mean stationarity and asymptotic covariance stationarity.
		\end{abstract}	

		\begin{keyword}[class=MSC]
			60G51, 60H10, 60H20, 62M10 
		\end{keyword}

		\begin{keyword}
			L\'evy process; COGARCH process; continuous time GARCH process;  stochastic functional differential equation; 
		\end{keyword}

	\end{frontmatter}

	%!TEX root = ../main.tex
\section{Introduction} \label{section/1/Introduction}
	The ARCH (autoregressive conditionally heteroscedasticity) and GARCH (generalized ARCH) models, introduced by \citet{Engle1982} and \citet{Bollerslev1986}, are widely used in financial econometrics to capture stylized features observed in asset return time series, including heteroscedasticity, volatility clustering and heavy tailedness.
	These classes of models aim to capture the conspicuous dependence between asset returns and their volatility, whereby a large fluctuation in the asset price typically causes a large fluctuation in the volatility, which persists for a period of time before reverting to a baseline level.
	For a review of the probabilistic properties, stationarity, and mixing properties of GARCH models, we refer to \citep{BougerolPicard1992} and \citep{Lindner2009a}. 

	For certain applications including option pricing and the modeling of irregularly spaced or high frequency data, a model in continuous time is often preferred. For a comprehensive review of continuous time GARCH processes, we refer to \citet{Lindner2009}.
	The first notable attempt was due to \citet{Nelson1990}, who constructed a diffusion limit from a suitably scaled sequence of GARCH(1,1) processes, akin to constructing a Wiener process from a sequence of scaled random walks.  
	However, the resulting diffusion process loses desirable properties possessed by the approximating GARCH processes in discrete time.
	In particular, the diffusion limit is driven by two independent Wiener processes whereas the GARCH process is driven by a single sequence of noise, and the feedback mechanism between the price and the volatility is lost in the limit. 
	Furthermore, \citet{Wang2002} shows that parameter estimation for the discrete time GARCH process is not equivalent to that of the diffusion limit.
	Nevertheless, numerous authors have studied the properties and applications of Nelson's limit, and we refer to  \citep{Corradi2000,Duan1997,KallsenTaqqu1998} and the references within.

	\citet*{KluppelbergLindnerMaller2004} took a significantly different approach and constructed the Continuous Time GARCH (COGARCH) process by replacing the innovation sequence in the original GARCH with the increments of a L\'evy process. 
	The resulting variance process is a generalized Ornstein-Uhlenbeck process (see \cite{LindnerMaller2005}) driven by a single L\'evy noise, and retains many of the desirable features of the original GARCH process. 
	\citet{KallsenVesenmayer2009} demonstrated that akin to the construction in \cite{Nelson1990}, the COGARCH process can indeed be obtained as a weak limit of a sequence of GARCH(1,1) processes, and argued heuristically that the diffusion limit and the COGARCH limit are the only possible limits of sequences of GARCH(1,1) processes.
	A different discrete approximation scheme was suggested by \citet*{MallerMullerSzimayer2008}, who provided a pseudo-maximum-likelihood estimator applicable to irregularly spaced data.
	Parameter estimation in the COGARCH process is considered in for instance \citep{HaugKluppelbergLindnerEtAl2007,MallerMullerSzimayer2008,Muller2010} and the COGARCH is applied in option pricing in numerous works including \citep{KluppelbergLindnerMaller2006,KluppelbergMallerSzimayer2010}. An analogous result to \cite{Wang2002} for the Nelson limit was obtained in \citet{BuchmannMuller2012}.

	Both the diffusion limit and the COGARCH process are Markovian processes, and the serial dependence in the variance process is implicit in the defining stochastic differential equation (SDE). 
	\citet{Lorenz2006} on the other hand constructed a non-Markovian continuous GARCH process that explicitly specifies the serial dependence in the volatility process. 
	Unlike the diffusion limit and the COGARCH process which are weak limits of GARCH(1,1) processes, 
	\cite{Lorenz2006} considered a sequence of GARCH$(pn+1,1)$ volatility processes defined sequence of uniform grids of mesh size $n^{-1}$ and took $n$ to infinity.
	A weak limit is obtained as the solution of a stochastic functional differential equation (SFDE) driven by a Wiener process, which is direct generalization of the Nelson limit.

	The same idea was explored in greater depth recently in the PhD thesis of \citet{Tran2013} and the working papers of \citet*{DunsmuirGoldysTran,DunsmuirGoldysTrana}, where the authors considered a more general situation in which both GARCH and ARCH orders are allowed to tend to infinity as the grid becomes finer. 
	The sequence of discrete GARCH price and volatility processes are shown to converge weakly in the Skorokhod topology to the solution of a pair of stochastic functional integral equations
	\begin{align*}
		Y_t & =  Y_0 + \int_0^t \sqrt{X_{s-}} dL_s,\\
		X_t & =  \theta_t + \int_{-p}^0 \int_{u}^{t+u} X_s ds \mu(du)
						+ \int_{-q}^0 \int_{u+}^{t+u} X_{s-} d[L,L]_s \nu(du),
	\end{align*}
	where $Y$ and $X$ are the return and variance processes respectively. Here, $\mu$ and $\nu$ are Borel measures representing the delay effects of higher order lags, $L$ and $\theta$ are semimartingales, and $[L,L]$ is the quadratic variation process of $L$. 
	This limit is referred to as the continuous time GARCH process with delays $p,q\ge 0$, or the CDGARCH$(p,q)$ process for short. 
	Depending on how the discrete noise sequence is constructed, 
	the limiting noise sequence $L$ could be a Brownian motion, in which case the CDGARCH generalizes Lorenz's limit in \cite{Lorenz2006}, or a L\'evy process, of which the COGARCH process is a special case. 
	In this paper, we focus on the latter case of a CDGARCH$(p,q)$ process driven by a L\'evy process, and compare its properties to the COGARCH and similar processes in the literature. 

	When $p>0$ and $q=0$, the CDGARCH$(p,0)$ variance process satisfies an SFDE with an affine (deterministic) drift coefficient and multiplicative noise. Although this class of equations appear in the existing literature, the usual assumptions made are generally too restrictive for our case. 
	Earlier work by \citet*{ReissRiedlevanGaans2006} focuses on the existence of an invariant measure of the solution to this class of SFDEs, while the uniqueness of the invariant distribution is discussed in \citet*{HairerMattinglyScheutzow2011}. 
	In the general case where $q\ne 0$, the resulting SFDE of the volatility process (derived in Section \ref{section/1/solution/existence}) has a non-deterministic drift coefficient that depends on past values of the driving noise as well as the volatility process, a case not considered in most works in the literature. 
	The monograph \citet*{BaoYinYuan2016} collected a series of recent papers, some of which have a close connection to the CDGARCH process and the question of strict stationarity.

	The rest of the paper is organized as follows. Section \ref{section/1/model} outlines the CDGARCH process constructed in \cite{DunsmuirGoldysTran}, motivated its resemblance to the GARCH process in discrete time. Section \ref{section/1/preliminaries} collects some preliminary results on stochastic integration, L\'evy processes, and deterministic  functional differential equations. We present the main results of this paper in Section \ref{section/1/solution} and Section \ref{section/1/moments}, but defer all proofs and technical lemmas to Section \ref{section/1/proofs} in order to maintain the flow of our exposition. 

	Section \ref{section/1/solution/existence} gives conditions for the existence, uniqueness and regularity of a strong solution to the CDGARCH equations, as well as differential representations of the solution in more conventional forms. Section \ref{section/1/solution/pathproperties} studies the path properties of the CDGARCH process under compound Poisson driving noise, while Section \ref{section/1/solution/compoundpoisson} shows that the solution in general can be approximated with solutions driven by compound Poisson processes. Section \ref{section/1/moments/uniform bounds} gives uniform $L^1$ and $L^2$ bounds for the solution process, and Section \ref{section/1/moments/mean function} studies the second order properties of the solution, including mean stationarity and covariance structure of the volatility process as well as the return process.

	%!TEX root = ../main.tex
\section{The CDGARCH Model}\label{section/1/model}
	The GARCH$(P,Q)$ model in discrete time, with GARCH order $P$ and ARCH order $Q$,  is defined in \cite{Bollerslev1986} as the bivariate process 
	\begin{subequations}
	\begin{align}
		Y_n:&= Y_{n-1}+ \sqrt{X_n} Z_n,\quad n\in\N, 			\label{equation/1/model/GARCH_Y}
			\\
		X_n:&= \eta + \sum_{k=1}^P \beta_k X_{n-k} + \sum_{k=1}^Q \alpha_k X_{n-k} Z_{n-k}^2, 
		\quad n\in\N. \label{equation/1/model/GARCH_X}
	\end{align} 
	\end{subequations}
	Here $\eta>0$ and $(Z_n)_{n\in\N}$ is a sequence of uncorrelated random variables with zero mean and unit variance. The non-negative real sequences $(\beta_i)_{1\le i\le P}$ and $(\alpha_i)_{1\le i\le  Q}$ are the GARCH and ARCH parameters respectively. The sequence $(Y_n)_n$ usually represents the log-return of a financial asset, while $X_n=\Var(Y_n|\cF_{n-1})$ models its conditional variance. 

	We now introduce a form of the CDGARCH equation to be studied in this paper, motivated by its resemblance with the GARCH process in discrete time. Writing $\td \beta_k = \beta_k - \ind_{{\{k=1\}}}$, for $P,Q>0$, the GARCH$(P,Q)$ process defined above can be rewritten as 
	\begin{subequations}
	\begin{align*}
		Y_n & =  Y_0 + \sum_{i=1}^n \sqrt{X_i} Z_i,\numberthis\label{equation/1/model/GARCH_aY_naogous}
			\\
		X_n
		 \numberthis\label{equation/1/model/GARCH_X_analogous}
		& =  X_0 + n \eta + \sum_{i=1}^n \sum_{k=1}^P (\beta_k-\indd{k=1}) X_{i-k} 
						+ \sum_{i=1}^n \sum_{k=1}^Q \alpha_k X_{i-k} Z_{i-k}^2
						   \\
		& =  X_0 + n \eta +\sum_{k=-P}^{-1} \td\beta_{-k} \sum_{i=1+k}^{n+k} X_{i} 
						+  \sum_{k=-Q}^{-1} \alpha_{-k} \sum_{i=1+k}^{n+k} X_{i} Z_{i}^2.
	\end{align*}
	\end{subequations}
	This formulation motivated \cite{DunsmuirGoldysTran,DunsmuirGoldysTrana,Tran2013} to study the following analogous setup in continuous time. Fix a filtered probability space $(\Omega, \cF, \mathbb F := (\cF_t)_{t\ge -r}, \Pm)$ that satisfies the usual assumptions (see \cite{Protter2004}, page 3). The CDGARCH$(p,q)$ equation with delays of length $p, q\ge 0$ is defined by the stochastic functional integral equations
	\begin{subequations}
	\begin{align}
		Y_t&= Y_0 + \int_0^t \sqrt{X_{s-}} dL_s, \quad t\ge 0,\label{equation/1/model/CDGARCH_Y}
			\\
		X_t&= \theta_t + \int_{-p}^0 \int_{u}^{t+u}  X_s ds \mu(du)						
		+ \int_{-q}^0 \int_{u+}^{t+u} X_{s-} d[L,L]_s \nu(du),\quad t\ge 0,
		 \label{equation/1/model/CDGARCH_X}
	\end{align}
	\end{subequations}
	with positive initial conditions $Y_0= \Psi$ and $X_u=\Phi_u$ for all $u\in[-(p\maxx q),0]$, where $\Psi$ is $\cF_0$-measurable and $(\Phi_u)_{u\in[-(p\maxx q),0]}$ is adapted. 
	Here, $\theta_t$ is  an adapted c\`adl\`ag process analogous to the term $X_0 + \eta n$ in \eqref{equation/1/model/GARCH_X_analogous}. The measures
	$\mu$ and $\nu$ are signed Borel measures with finite variations, supported on $[-p,0]$ and $[-q,0]$ respectively, analogous to the sequence of coefficients ${(\td \beta_k)}_{1\le k\le P}$ and $(\alpha_k)_{1\le k\le Q}$. 
	The process $L$ is a locally square integrable L\'evy process adapted to $\mathbb F$, 
	analogous to the white noise sequence $(Z_\nu)_n$, and $[L,L]$ is the quadratic variation process of $L$ analogous to the sequence $(Z_n^2)_n$. We will specify $L$ in detail in Section \ref{section/1/preliminaries/levyprocess}.

	In this paper we will assume the following specification of the process $\theta$ and the delay measures $\mu$ and $\nu$. The motivation behind these choices comes from their resemblance to the discrete time GARCH, as well as from the methodology in the construction of the CDGARCH process as a weak limit of discrete GARCH processes in \cite{DunsmuirGoldysTran,DunsmuirGoldysTrana,Tran2013}.

	Put $r:=p\maxx q$. Let $\scD_{[a,b]} := \scD([a,b])$ (resp. $\bD_{[a,b]}$) be the space of c\`adl\`ag functions (resp.  processes) on $[a,b]\subseteq\Rm$ and write $\scD:=\scD_{[-r,0]}$ and $\bD:=\bD_{[-r,0]}$. Given an initial process $\Phi\in\bD$, we extend it to $\bD_{[-r,\infty)}$ by setting $\Phi_t=\Phi_0$, for all $t>0$. 
	% We will further assume that $\Phi$ is such that $\Em[\Phi_t]$ is a continuous in $t$ for all $t\ge -r$.
	Fix a positive constant $\eta$. Throughout the paper we will assume $\theta$ takes the form
	\begin{align}
		\theta_t:= \Phi_t+ \eta t \ind_{[0,\infty)}(t), \quad t\in[-r,\infty),
		\label{equation/1/model/theta}
	\end{align}
	and assume the delay measures have point masses at $0$ and are absolutely continuous with respect to the Lebesgue measure on $[-r,0)$. That is, 
	for any $E\in\cB([-r,0])$, 
	\begin{align}
		\label{equation/1/model/munu}
		\mu(E)&:=
		\int_{E\cap [-p,0]} f_\mu(u) du
		-c_\mu\delta_0(E),
		\qand
		\nu(E):=
		\int_{E\cap [-q,0]} f_\nu(u) du
		+c_\nu\delta_0(E). 
	\end{align}
	Here $c_\mu$ and $c_\nu$ are positive constants, $f_\mu$ and $f_\nu$ are nonnegative and continuous functions supported on $[-p,0]$ and $[-q,0]$ respectively, and $\delta_0$ denotes the Dirac measure at zero. 
	% We will furthermore assume that the densities $f_\mu$ and $f_\nu$ are absolutely continuous on their supports, and (for technical convenience) are normalized to 
	% \begin{align*}
	% 	f_\mu(-p)=f_\nu(-q) =0.\numberthis\label{equation/1/model/f-normalization}
	% \end{align*}
	% This represents the delay effects decaying continuously to zero. 
	Since $f_\mu$ and $f_\nu$ are supported on $[-p,0]$ and $[-q,0]$, it is clear that $\int_{[-p,0]} f_\mu= \int_{[-r,0]} f_\mu $ and $\int_{[-q,0]} f_\nu= \int_{[-r,0]} f_\nu$. We will write $\int_{[-r,0]}$ for either integral throughout the paper when there is no ambiguity of the meaning.

	\begin{remark}\label{remark/1/model/choicesofmeasures}
		These choices, including the signs of the constants, are very natural in that they arise as the limit of the sequences $(\td \beta_k)_{1\le k\le P}$ and $(\alpha_k)_{1\le k\le Q}$ as a sequence of GARCH processes converges weakly to the CDGARCH limit. On the other hand, we may study the CDGARCH equation as a continuous time GARCH process in its own right without constraining it to be a proper weak limit, then we have more freedom in choosing $\theta$, $\mu$ and $\nu$. \qed
	\end{remark}

	\begin{remark}\label{remark/1/model/COGARCH}
		When $p=q=0$, $c_\mu, c_\nu >0$, and $\theta_t = X_0 + \eta t\ind_{[0,\infty)}(t)$, the CDGARCH variance equation \eqref{equation/1/model/CDGARCH_X} reduces to a stochastic differential equation
			\begin{align*}
				dX_t =  \eta dt -c_\mu X_t dt + c_\nu  X_{t-} d[L,L]_t,\quad t>0,\numberthis\label{equation/1/model/COGARCH}
			\end{align*}
		which (with a reparameterization) is the SDE specifying the COGARCH process (see \cite{KluppelbergLindnerMaller2004} Proposition 3.2). 
		On the other hand, taking $L$ to be a Brownian motion, it is possible to define a similar pair of SFDEs that generalizes Nelson's diffusion and Lorenz's limit. We do not pursue this setup. 
		
	\end{remark}
	
	\begin{remark}
		In the case where $p\ne0$ and $q=0$, by Fubini's Theorem, the variance process follows a stochastic delayed differential equation
			\begin{align*}
				dX_t&= \left(\eta + \int_{[-p,0]} X_{t+u} \mu(du)\right) dt
						+ c_\nu  X_{t-} d[L,L]_t ,\numberthis\label{equation/1/model/CDGARCH(p,0)_X}
			\end{align*}
			which has an affine (delayed) drift term and multiplicative noise. A similar equation was considered in \citet{ReissRiedlevanGaans2006}. Although more general in some respects, \cite{ReissRiedlevanGaans2006} assumed uniform boundedness of the diffusion coefficient, which does not apply in our situation.
	\end{remark}

	%!TEX root = ../main.tex
\section{Preliminaries}\label{section/1/preliminaries}
We first collect some preliminary results. We follow \citet{JacodShiryaev2013,Protter2004} semimartingale theory, \citet{Applebaum2009} for L\'evy processes, and \citet*{DiekmannGilsLunelEtAl2012} for deterministic delay differential equations. 

	\subsection{Driving L\'evy process}\label{section/1/preliminaries/levyprocess}
	Recall $r:=p\maxx q>0$ and suppose we have a filtered probability space $(\Omega, \cF, \F:=(\cF_t)_{t\ge -r}, \Pm)$ that satisfies the ``usual conditions'' (see Definition 12, \cite{JacodShiryaev2013}).  Given a stochastic process $Z$, we write $\sigma(Z):=(\sigma\{Z_u, u\le t\}  )_{t\ge -r}$ for the natural filtration of $Z$. 
	% Any c\`adl\`ag, adapted process is jointly measurable with respect to $\cF\otimes\cB([-r, \infty))$. 

	Let $(M_t)_{t\ge -r}$ be a c\`adl\`ag, adapted martingale with respect to $\mathbb F$. We follow \citet{Protter2004} and call $M$ a square integrable martingale if $\Em[M_t^2]<\infty$ for every $t\ge -r$. For a process $Z$ with finite second moments, i.e. $\Em[Z^2_t]<\infty$ for all $t$, 
	write $[Z]:=[Z,Z]$ (resp. $\langle Z \rangle:= \langle Z,Z \rangle$) for the quadratic variation (resp. predictable quadratic variation) process of $Z$. 
	Let $L^2(Z)$ be the set of all predictable processes $H$ such that the integral process $H^2 \cdot \langle Z \rangle$ is integrable, i.e. $\Em[\int_{-r}^T H^2_s d \langle Z\rangle]<\infty$ for each fixed $T$. The following lemma follows from Theorems I.4.31 - I.4.40 of \citet{JacodShiryaev2013}.
	\begin{lemma}\label{lemma/1/preliminaries/Jacod}
		Let $Z$ be a semimartingale and suppose $H$ is c\`adl\`ag and predictable. Then the integral process $H\cdot Z$ is a c\`adl\`ag, adapted process. If furthermore $Z$ is a square integrable martingale and $H\in L^2(Z)$, then $H\cdot Z$ is a square integrable martingale. 
	\end{lemma}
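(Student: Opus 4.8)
The plan is to present both statements as direct consequences of the semimartingale stochastic-integration theory of \cite{JacodShiryaev2013}, so that the real content is checking that our hypotheses place $H$ in the integrand classes used there; I keep in mind throughout that the index set is $[-r,\infty)$ and that ``square integrable'' means $\Em[M_t^2]<\infty$ for each fixed $t$, so that every integrability requirement need only hold on the compact intervals $[-r,T]$. First I would establish the first assertion by showing that a c\`adl\`ag predictable process is an admissible integrand against an arbitrary semimartingale. Each c\`adl\`ag path is bounded on every compact interval---on $[-r,T]$ it has only finitely many jumps larger than a fixed size and finite one-sided limits---so $\sup_{-r\le s\le t}|H_s|<\infty$ almost surely for each $t$; using predictability to stop strictly before the (predictable) jumps of $H$ turns this into a localizing sequence $\tau_n\uparrow\infty$ with each $H^{\tau_n}$ uniformly bounded. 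Hence $H$ is locally bounded and predictable, so it belongs to the integrand class $\mathbf{L}(Z)$, and Theorem I.4.31 of \cite{JacodShiryaev2013} gives that $H\cdot Z$ is a semimartingale, in particular c\`adl\`ag and adapted.

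For the second assertion I would add the hypotheses that $Z$ is a square integrable martingale and $H\in L^2(Z)$, i.e. $\Em\big[\int_{-r}^T H_s^2\,d\langle Z\rangle_s\big]<\infty$ for every $T$. Since $Z$ is then a locally square integrable local martingale and $H$ is predictable and $Z$-integrable, $H\cdot Z$ is a local martingale. The decisive input is the isometry supplied by the cited theorems, $\langle H\cdot Z\rangle=H^2\cdot\langle Z\rangle$ together with $\Em[(H\cdot Z)_t^2]=\Em\big[\int_{-r}^t H_s^2\,d\langle Z\rangle_s\big]$, whose right-hand side is finite for each $t$ precisely because $H\in L^2(Z)$. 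This uniform (along a localizing sequence) $L^2$ control upgrades the local martingale $H\cdot Z$ to a true martingale and simultaneously shows $\Em[(H\cdot Z)_t^2]<\infty$ for every $t$, which is the asserted square integrability.

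The hard part will be the admissibility step for the first assertion: unlike a c\`agl\`ad adapted integrand, whose jumps are automatically absorbed into the left limit, a merely c\`adl\`ag predictable $H$ could in principle jump to a value that is unbounded in $\omega$, so local boundedness is not obvious a priori. The point to get right is that predictability allows one to announce these jump times and stop immediately before them, so that the pathwise boundedness of c\`adl\`ag functions on compacts can be promoted to a uniform bound on each stopped process; the sole caveat is the value at the left endpoint $-r$, which is controlled by the adaptedness of the initial data. Once this admissibility is in place, both conclusions follow from Theorems I.4.31--I.4.40 of \cite{JacodShiryaev2013} as indicated.
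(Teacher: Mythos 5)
Your proposal is correct and matches the paper's treatment: the paper gives no separate argument for this lemma, simply deducing it from Theorems I.4.31--I.4.40 of \citet{JacodShiryaev2013}, and your write-up supplies exactly the details that citation suppresses (the standard fact that a c\`adl\`ag predictable process is locally bounded, hence an admissible integrand, plus the $L^2$ isometry and localization argument that upgrades the local martingale $H\cdot Z$ to a true square integrable martingale). No discrepancies to report.
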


	We assume that the space $(\Omega, \cF, \mathbb F, \Pm)$ supports a c\`adl\`ag, $\mathbb F$-adapted L\'evy process $(L_t)_{t\ge -r}$, such that $L_{-r}=0$ a.s.,  $L$ is stochastically continuous and for all $-r\le s<t<\infty$, $L_t-L_s$ is independent of $\cF_s$ and has the same distribution as $L_{t-s-r}$. 
	Put $\Rm_0 := \Rm\setminus\{0\}$ and write $\cB(\Rm_0)$ for the Borel sigma-algebra on $\Rm_0$. 
	When $U\in \cB(\Rm_0)$ with $0\notin \overline U$, write
	\begin{align*}
		N(t,U): = \sum_{-r\le s\le t} \ind_{U} \left(\Delta L_s\right), \quad t>0
	\end{align*}
	for the Poisson random measure on $\cB(0,\infty)\times \cB(\Rm_0)$ associated with $(L_t)_{t\ge -r}$ and write $\Pi_L(U):= \Em[N(-r+1,U)]$ for the corresponding L\'evy measure on $\cB(\Rm_0)$. 
	Write $\td N(dt,dz) := N(dt, dz) - \Pi_L(dz)dt$ for the compensated Poisson random measure.

	Recall that a L\'evy measure $\Pi_L$ always satisfies $\int_{\Rm_{_0}} (1\minn z^2) \Pi_L(dz)< \infty$. Throughout the paper, we will also assume that $\Pi_L$ has finite second moment and $L$ is centered, so that  $(L_t)_{t\ge -r}$ is a square integrable martingale with respect to $\mathbb F$, i.e., $\Em[L_t] =0$ and  $\Em[L_t^2]<\infty$ for all $t\ge -r$. 
	The Fourier transform of the law of $L_t$ is then given by the L\'evy-Khintchine formula
	\begin{align*}
		\Em\left[e^{iu L_t }\right] 
		& = \exp\left\{ (t+r) \left(- \frac{1}{2}\sigma_L^2 u^2 
		+ \int_{\Rm_0} (e^{iuz} - 1 -iuz)\Pi_L(dz) \right) \right\},\quad u\in\Rm,
	\end{align*}
	where $\sigma_L>0$. 
	Furthermore, the L\'evy-It\^o decomposition of $L$ gives
	\begin{align*}
		L_t = \sigma_L B_{t} + \int_{-r}^{t} \int_{\Rm_0} z \td N(dz, ds), t\ge -r,
		\numberthis\label{equation/1/preliminaries/drivingnoise/L}
	\end{align*}
	% \begin{align*}
	% 	L_t = \sigma_L B_{t} + \int_{-r}^{t} \int_{\Rm_0} z \td N(dz, ds),\quad t\ge -r,
	% 	% \numberthis\label{equation/1/preliminaries/drivingnoise/L}
	% \end{align*}
	where $(B_{t})_{t\ge -r}$ is a standard Brownian motion with respect to $\mathbb F$, having $B_{-r}=0$, a.s. 
	The quadratic variation process $S:=[L,L]$ of $L$ is the subordinator
	\begin{align*}
		S_t = \sigma_L^2 (t+r) + \int_{-r}^t\int_{\Rm_0} z^2 N(dz, dt), \quad t\ge -r. \numberthis\label{equation/1/preliminaries/drivingnoise/S}
	\end{align*}
	Put $\kappa_2: = \Em[S_{-r+1}] = \sigma_L^2 + \int_{\Rm_0} z^2 \Pi_L(dz)<\infty$, so that the process $(\td S_t)_{t\ge -r}$ defined by
	\begin{align*}
		\numberthis \label{equation/1/preliminaries/drivingnoise/tdS}  	
		\td S_t := S_t - \kappa_2(t+r)= \int_{-r}^t\int_{\Rm_0} z^2 \td N(dz, dt), \quad t\ge -r,
	\end{align*}  
	is a martingale with respect to $\mathbb F$ (see \cite{Applebaum2009}, Theorem  2.5.2). 
	If furthermore $L$ has finite fourth moments, then $\kappa_4:=\Em[\td S^2_{-r+1}]<\infty$ and $\td S$ is a square integrable martingale, with predictable quadratic variation process $d\langle S \rangle_t = \kappa_4 dt$.

	\subsection{Delay differential equations}\label{section/1/preliminaries/dde}
	Consider the deterministic functional differential equations
	\begin{align*}
		\frac{d}{dt}x(t) = \int_{[-r,0]} x(t+u) \mu (du), \quad t\ge 0,\numberthis\label{equation/1/preliminaries/dde/dde}
	\end{align*}
	with initial condition $x|_{[-r,0]}=\phi$ for some $\phi\in\scD$. Here $\mu$ is a signed Borel measure with finite total variation on $[-r,0]$. For each initial condition $\phi\in \scD$, there exists a unique  solution $t\mapsto x(t, \phi)$ on $[-r, \infty)$, i.e. $x(u, \phi) = \phi(u)$ for all $u\in [-r,0]$, $t\mapsto x(t, \phi)$  is continuously differentiable on $(0, \infty)$, and \eqref{equation/1/preliminaries/dde/dde} holds on $(0, \infty)$. 
	% The solution $\alpha(\cdot):= x(\cdot, \ind_{ \{0\}})$ to \eqref{equation/1/preliminaries/dde/dde} corresponding to the initial condition $\phi(u) = \ind_{ \{0\}}(u)$,
	% \begin{align*}
	% 	\alpha(t) = 1+ \int_0^t \int_{[-(r\minn s), 0]} \alpha (s+u) \mu(du) ds, \quad t\ge 0,
	% 	\numberthis\label{equation/1/preliminaries/dde/fundamental}
	% \end{align*}
	% is known as the {\textit {fundamental solution}} of \eqref{equation/1/preliminaries/dde/dde}. 
	% For an arbitrary initial condition $\phi\in \scD$, we could represent the solution $x(\cdot, \phi)$ via a variation of constants formula
	% \begin{align*}
	% 	x(t, \phi) = \phi(0)\alpha(t) + \int_{0}^t \alpha(t-s) \left(\int_{[-r, -s)} \phi(s+u) \mu(du)\right) ds,\quad t\ge 0.
	% \end{align*} 
	The asymptotic stability of this solution as $t\to \infty$ is governed by the roots of the so-called \textit{characteristic function} $\Delta:\C\to \C$ of $\mu$, defined as
	\begin{align*}
		\Delta(z) := z - \hat \mu(z) = z-\int_{[-r,0]} e^{zu} \mu(du).
		\numberthis\label{equation/1/preliminaries/dde/characteristicfunction}
	\end{align*}
	Let $x(\cdot, \phi)$ be a solution to \eqref{equation/1/preliminaries/dde/dde} and fix any $\lambda\in\Rm$ such that $\Delta(z)\ne0$ on the line $\Re z =\lambda$. Then \cite{DiekmannGilsLunelEtAl2012} gives the following asymptotic expansion of $t\mapsto x(t, \phi)$:
	\begin{align}
		x(t,\phi) = \sum_{j=1}^n p_j(t) e^{z_j t} + o(e^{\lambda t}), \quad t\to \infty,\label{equation/1/preliminaries/dde/asyExpansion}
	\end{align}
	where $z_1,\ldots, z_n$ are finitely many zeros of $\Delta(z)$ with real part exceeding $\lambda$, and $p_j(t)$ is a $\C$-valued polynomial in $t$ of degree less than the multiplicity of $z_j$ as a zero of $\Delta(z)$. 
	In particular, it's clear from \eqref{equation/1/preliminaries/dde/asyExpansion} that if $\Delta(z)$ is root free in the right half-plane $\{z|\Re z\ge 0\}$, then the zero solution is asymptotically stable, that is, 
	all solutions $x(\cdot, \phi)$ of the functional differential equation \eqref{equation/1/preliminaries/dde/dde} converge to the zero solution exponentially fast as $t\to\infty$. 
	
	% Finally, let $L$ be a L\'evy process, $r$ and $\mu$ be defined as above, and $c_1, c_2$ be constants with $c_2\ne0$.
	% For the stochastic delayed differential equation of the form 
	% \begin{align*}
	% 	dX_t = \left(\int_{[-r,0]} X_{t+u} \mu(du)\right) dt + \Big(c_1+ c_2X_{t-}\Big) dL_t, 
	% \end{align*}
	% with initial condition $X = \Phi$ on $[-r,0]$, \citet{ReissRiedleGaans2007} gives a variation of constant formula
	% \begin{align*}
	% 	X_t = x(t, \Phi) + \int_0^t \alpha(t-s) \Big(c_1+ c_2X_{s-}\Big) dL_s,\quad t\ge 0,\numberthis\label{equation/1/preliminaries/dde/sddevariationofconstants}
	% \end{align*}
	% where $x$ is a solution to \eqref{equation/1/preliminaries/dde/dde} corresponding to the initial condition $\Phi$ and $r$ is the  fundamental solution of the FDE \eqref{equation/1/preliminaries/dde/dde}.

 		%!TEX root = ../main.tex
\section{The solution process}\label{section/1/solution}
We wish to rewrite \eqref{equation/1/model/CDGARCH_X} into a form more commonly seen in the literature, but doing so requires some regularity of the solution $X$. We thus start with establishing the existence, uniqueness and regularity of a strong solution to \eqref{equation/1/model/CDGARCH_X}. All proofs and supporting lemmas are deferred to Section \ref{section/1/proofs} in order to maintain the flow of the exposition. 

\subsection{Existence, uniqueness and representations}\label{section/1/solution/existence} We define the appropriate space for the solution process and give sufficient conditions for the existence of a unique solution in this space.

%%%%%%%%%%%%%%%%%%%%%%%%%%%%%%%%%
\begin{definition}
	For $Z\in \bD_{[-r, \infty)}$, define the maximum process $Z^*_t:=\sup_{0\le s\le  t} |Z_s|$ and the random variable $Z^*:=\sup_{s\ge 0} |Z_s|$.
	Let $(\norm{\cdot}_t)_{t\ge -r}$ be a family of semi-norms given by
	\begin{align*}
		\norm{Z}_{t}:=\norm{Z^*_t}_{L^2(\Omega, \Pm)}
		=\left( \Em\left[\sup_{s\in[-r,t]} |Z_s|^2\right]\right)^{1/2}.
		\numberthis\label{equation/1/existence/normt}
	\end{align*}
	We denote by $\cS^2$ the class of c\`adl\`ag processes on $[-r,\infty)$ with finite $\norm{\cdot}_t$ for every $t\ge -r$. 
	\qed
\end{definition}

%%%%%%%%%%%%%%%%%%%%%%%%%%%%%%%%%%%%
\begin{definition}\label{definition/1/strongsolution}
	A process $X= (X^\Phi_t)_{t\ge -r}$ is called a strong solution to the variance equation \eqref{equation/1/model/CDGARCH_X} with $\bD$-valued initial condition $\Phi$ if $X$ is in $\cS^2$, is adapted to $\mathbb F$, satisfies $X|_{[-r,0]} = \Phi$, and \eqref{equation/1/model/CDGARCH_X} holds on $(0,\infty)$. We refer to this solution as the CDGARCH$(p,q)$ variance process. \qed
\end{definition}
%%%%%%%%%%%%%%%%%%%%%%%%%%%%%%
	\begin{assumption}\label{assumptions/1/existence}
		\begin{enumerate}[(a)]
		\item[] 

		\item The initial process $\Phi\in\bD$ is adapted to $\sigma(L)$, with $\norm{\Phi}_0<\infty$.
		\label{assumptions/1/existence/Phi}

		\item The process $S$ as defined in \eqref{equation/1/preliminaries/drivingnoise/S} is square integrable, i.e. $\Em[L^4_1]<\infty$.
		\label{assumptions/1/existence/S}
		\qed
		
		\end{enumerate}
	\end{assumption} 
%%%%%%%%%%%%%%%%%%%%%%%%%%%%%%%%%%%%%%
	\begin{theorem}\label{theorem/1/existence} 
	Suppose $S$ and $\Phi$ satisfy Assumptions \ref{assumptions/1/existence}.
	\begin{enumerate}[(a)]
		\item There exists a unique strong solution $X$ to \eqref{equation/1/model/CDGARCH_X} with initial condition $\Phi$. 
		\label{theorem/1/existence/1}
		\item For all $\alpha\in[0,2]$, the function $t\mapsto \Em[|X_t|^\alpha]$ is finite valued and c\`adl\`ag.
		\label{theorem/1/existence/2}
		\qed
	\end{enumerate}
	
	\end{theorem}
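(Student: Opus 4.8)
My plan is to recast the variance equation \eqref{equation/1/model/CDGARCH_X} as a fixed-point equation on the complete space $\cS^2$ and invoke the Banach contraction principle. For a process $Z$ that agrees with $\Phi$ on $[-r,0]$ define
\[
(\cT Z)_t := \theta_t + \int_{-p}^0\int_u^{t+u} Z_s\,ds\,\mu(du) + \int_{-q}^0\int_{u+}^{t+u} Z_{s-}\,d[L,L]_s\,\nu(du),\quad t\ge 0,
\]
and $(\cT Z)_t:=\Phi_t$ on $[-r,0]$; strong solutions of \eqref{equation/1/model/CDGARCH_X} are precisely the fixed points of $\cT$. Writing $[L,L]=S$ and using the martingale decomposition $dS_s=\kappa_2\,ds+d\td S_s$ implicit in \eqref{equation/1/preliminaries/drivingnoise/tdS}, the last integral splits into a finite-variation (drift) part $\kappa_2\int_{-q}^0\int_{u+}^{t+u}Z_{s-}\,ds\,\nu(du)$ and a martingale part $\int_{-q}^0\int_{u+}^{t+u}Z_{s-}\,d\td S_s\,\nu(du)$. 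Below $C$ denotes a constant depending only on $\eta$, $r$, $\kappa_2$, $\kappa_4$, the horizon $T$, and the total variations $|\mu|([-p,0])$, $|\nu|([-q,0])$ (which already absorb the masses $c_\mu,c_\nu$).

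First I would check that $\cT$ maps $\cS^2$ into itself. The two drift terms are continuous in $t$ and adapted, the martingale term is c\`adl\`ag and adapted by Lemma \ref{lemma/1/preliminaries/Jacod} once a stochastic Fubini theorem is used to interchange the $\nu(du)$-integration with the $d\td S_s$-integration, and $\theta_t=\Phi_t+\eta t\ind_{[0,\infty)}(t)$ is c\`adl\`ag adapted with $\norm{\theta}_t\le\norm{\Phi}_0+\eta t<\infty$ by Assumptions \ref{assumptions/1/existence}; the finiteness $\norm{\cT Z}_t<\infty$ then follows by bounding $\norm{\cT Z-\theta}_t$ with the estimates below and using $\norm{\theta}_t<\infty$. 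For the contraction, fix $X,Y\in\cS^2$ with the same initial condition, put $D:=X-Y$ (so $D\equiv0$ on $[-r,0]$), and bound $\norm{\cT X-\cT Y}_t$ term by term. Each drift term is controlled by pulling out the total mass of $|\mu|$ or $|\nu|$ and applying Cauchy--Schwarz in time, giving a contribution $\le C(t+r)\int_0^t\norm{D}_v^2\,dv$. For the martingale part I estimate, for each fixed $u$, the inner integral $M^u_s:=\int_{u+}^{s+u}D_{v-}\,d\td S_v$ via Doob's $L^2$-maximal inequality and the isometry $d\langle S\rangle_v=\kappa_4\,dv$, obtaining $\Em[\sup_{s\le t}|M^u_s|^2]\le 4\kappa_4\int_0^t\norm{D}_v^2\,dv$ uniformly in $u$, and then integrate over $u$ using $\sup_s|\int M^u_s\,\nu(du)|\le\int\sup_s|M^u_s|\,|\nu|(du)$ followed by Cauchy--Schwarz against $|\nu|$. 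Collecting the terms yields the Gronwall-ready inequality
\[
\norm{\cT X-\cT Y}_t^2\le C\int_0^t\norm{X-Y}_v^2\,dv,\qquad t\in[0,T],
\]
which upon iteration gives $\norm{\cT^nX-\cT^nY}_T^2\le\frac{(CT)^n}{n!}\norm{X-Y}_T^2$. Hence $\cT^n$ is a contraction on the complete affine subspace of $\cS^2$-processes on $[-r,T]$ with fixed initial value $\Phi$ for $n$ large, so $\cT$ has a unique fixed point there; since $T>0$ is arbitrary and the fixed points are consistent across horizons, this establishes the unique strong solution claimed in part (a).

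I expect the main difficulty to be the martingale term. Making rigorous sense of the iterated stochastic integral $\int_{-q}^0\int_{u+}^{t+u}Z_{s-}\,d\td S_s\,\nu(du)$ as a well-defined c\`adl\`ag adapted process requires a stochastic Fubini theorem, both to justify interchanging the deterministic $\nu$-integral with the stochastic $\td S$-integral and to secure the joint measurability; moreover its $L^2$ estimate must be handled through Doob's inequality and the isometry rather than pathwise, with the Dirac mass at $0$ in $\nu$ contributing an extra stochastic integral $c_\nu\int_{0+}^{t}Z_{s-}\,d\td S_s$ that is treated the same way. The finite-variation estimates are routine once the total-variation bounds on $\mu,\nu$ and Assumptions \ref{assumptions/1/existence} are in force.

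For part (b), finiteness is immediate from $X\in\cS^2$: by the Lyapunov (Jensen) inequality $\Em[|X_t|^\alpha]\le(\Em[|X_t|^2])^{\alpha/2}\le\norm{X}_t^\alpha<\infty$ for $\alpha\in[0,2]$. For the c\`adl\`ag property I would pass the path regularity of $X$ through the expectation by dominated convergence: for $t'$ in a bounded neighbourhood of $t$ one has $|X_{t'}|^\alpha\le (X^*_T)^\alpha\le 1+(X^*_T)^2$, which is integrable since $\norm{X}_T<\infty$. As $t'\downarrow t$, right-continuity of the paths gives $|X_{t'}|^\alpha\to|X_t|^\alpha$ a.s., whence $\Em[|X_{t'}|^\alpha]\to\Em[|X_t|^\alpha]$; as $t'\uparrow t$, the existence of left limits gives $|X_{t'}|^\alpha\to|X_{t-}|^\alpha$ a.s., whence $\Em[|X_{t'}|^\alpha]\to\Em[|X_{t-}|^\alpha]$. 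Thus $t\mapsto\Em[|X_t|^\alpha]$ is right-continuous with left limits, proving part (b).
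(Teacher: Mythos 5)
Your proposal is correct and follows essentially the same route as the paper: the paper also splits $S$ into $\kappa_2\,ds + d\td S_s$, derives the same Gronwall-ready estimate (its Lemma \ref{lemma/1/existence/norm}, via Cauchy--Schwarz, Doob's inequality and the It\^o isometry), and runs a Picard iteration with the factorial bound $\frac{(CT)^n}{n!}$ plus completeness of the space in $\norm{\cdot}_T$ --- your Banach-fixed-point-with-power-contraction phrasing is just a repackaging of that argument, and your part (b) (domination by $1+(X_T^*)^2$ and dominated convergence along right and left approaches) coincides with the paper's proof.
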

%%%%%%%%%%%%%%%%%%%%%%%%%%%%%%%%
	\begin{remark}
		At this point we do not require the solution $X$ to be bounded away from zero or even positive. We give a positive lower bound for $X$ in Theorem \ref{theorem/1/solution/positivity} of Section \ref{section/1/solution/compoundpoisson}, after deriving a more convenient representation.
		It is then immediate that the CDGARCH equation \eqref{equation/1/model/CDGARCH_Y} is also well defined and a semimartingale, with jumps given by $\Delta Y_t = X_{t-}^{1/2} \Delta L_t$. \qed
	\end{remark}
	
	We precede Theorem \ref{theorem/1/semimartingale} with the following useful results. Recall that the delay densities $f_\mu$ and $f_\nu$ from \eqref{equation/1/model/munu} are $L^1$ functions. Let $F_\mu, F_\nu:\Rm^2\to \Rm$ be kernels given by
	\begin{align*}
		F_\mu(t,s) & :=   \int_{[-p\vee (s-t), s\wedge 0]} f_\mu(u) du,
		\quad
		F_\nu(t,s)  :=   \int_{[-q\vee (s-t), s\wedge 0]} f_\nu(u) du.
		\numberthis\label{equation/1/semimartingale/kernelF}
	\end{align*}
	Then $F_\mu$ and $F_\nu$ are clearly Volterra type kernels on $\Rm^2$, i.e. $F(t,s)=0$ for all $s\ge t$. 
	It will turn out to be useful to consider the stochastic process $(\Xi(X)_t)_{t\ge 0}$ defined by 
	\begin{align*}
		\Xi(X)_t := \int_{[-p,t]} F_\mu(t,s)  X_{s}   d{s}
		+ \int_{(-q,t]} F_\nu(t,s)  X_{s-}   dS_{s},\quad t\ge 0.
		\numberthis\label{equation/1/semimartingale/Xi(X)VolterraKernel}
	\end{align*}
	
%%%%%%%%%%%%%%%%%%%%%%%%%%%%%%%%%%%%%%%%%%%%%%%%%%%%%%
	\begin{proposition} \label{proposition/1/semimartingale}
	Let $f_\mu$ and $f_\nu$ be non-negative and continuous on $[-p,0]$ and $[-q,0]$.
		\begin{enumerate}[(a)]
			\item \label{proposition/1/semimartingale/kernelF}
			The kernels $F_\mu$ and $F_\nu$ are non-negative Lipschitz continuous functions on $\Rm^2$. 
			
			\item \label{proposition/1/semimartingale/Xi} 
			The process $(\Xi(X)_t)_{t\ge 0}$ has locally Lipschitz continuous sample paths, 
			with derivative
			\begin{align*}
			\numberthis\label{equation/1/semimartingale/xi}
			\xi(X)_t:= \frac{d}{dt}\Xi(X)_t = \int_{-p}^0 f_\mu(u)X_{t+u}du + \int_{-q+}^0 f_\nu(u) X_{t+u-}dS_{t+u},
			\end{align*}
			for Lebesgue almost every $t\ge 0$.
			\qed

		\end{enumerate}
	\end{proposition}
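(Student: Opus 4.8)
The plan is to prove the two parts separately, and for part (b) to split $\Xi(X)$ into its Lebesgue and Stieltjes pieces and treat them in parallel. The decisive simplification is that the quadratic variation $S=[L,L]$ is a subordinator, hence of finite variation, so $\int X_{s-}\,\dS_s$ is a genuine pathwise Lebesgue--Stieltjes integral and the whole argument can be run $\omega$ by $\omega$ on the full-measure event where $X$ is c\`adl\`ag (hence locally bounded) and $S$ is nondecreasing and c\`adl\`ag.

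For part (a) I would write the endpoints of integration as $a(t,s)=-p\vee(s-t)$ and $b(t,s)=s\wedge 0$ (and analogously for $F_\nu$); these are $1$-Lipschitz in $(t,s)$ as compositions of the affine maps $(t,s)\mapsto s-t,\,s$ with $\max$ and $\min$. Setting $G_\mu(x):=\int_{-\infty}^x f_\mu(v)\,\dv$, which is bounded (as $f_\mu\in L^1$), nondecreasing (as $f_\mu\ge 0$) and $\norm{f_\mu}_\infty$-Lipschitz (as $f_\mu$ is continuous on the compact $[-p,0]$), one has $F_\mu(t,s)=\big(G_\mu(b(t,s))-G_\mu(a(t,s))\big)_+$. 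Non-negativity is then immediate, the Volterra property follows because $s\ge t$ forces $a\ge 0\ge b$, and Lipschitz continuity follows since $F_\mu$ is a composition of Lipschitz maps with the $1$-Lipschitz map $x\mapsto x_+$; the same applies to $F_\nu$.

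For part (b), writing $\Xi(X)_t=A_t+B_t$ for the $\ds$- and $\dS$-integrals, I would estimate the increment $B_{t+h}-B_t$ for $0\le t<t+h\le T$ by splitting it into an interior part $\int_{(-q,t]}[F_\nu(t+h,s)-F_\nu(t,s)]X_{s-}\,\dS_s$ and a boundary part $\int_{(t,t+h]}F_\nu(t+h,s)X_{s-}\,\dS_s$. The interior part is $O(h)$ by the Lipschitz bound from part (a); on the boundary strip $F_\nu(t+h,s)\le\norm{f_\nu}_\infty(t+h-s)\le\norm{f_\nu}_\infty h$ (this uses precisely $F_\nu(t+h,t+h)=0$), so the boundary part is $O(h)$ as well after bounding $X$ and using $S_T-S_0<\infty$. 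Hence $B$ is locally Lipschitz, and the identical estimate with Lebesgue measure in place of $\dS$ gives the same for $A$. For the derivative I would divide by $h$ and let $h\downarrow 0$: the boundary quotient vanishes since $F_\nu(t+h,s)/h$ stays bounded while $S_{t+h}-S_t\to 0$ by right-continuity of $S$, and in the interior quotient $h^{-1}[F_\nu(t+h,s)-F_\nu(t,s)]\to\partial_t F_\nu(t,s)=f_\nu(s-t)\,\ind_{(t-q,t]}(s)$ pointwise while remaining bounded by the Lipschitz constant, so dominated convergence against the finite measure $|X_{s-}|\,\dS_s$ gives the right derivative $\int_{(t-q,t]}f_\nu(s-t)X_{s-}\,\dS_s=\int_{-q+}^0 f_\nu(u)X_{t+u-}\,\dS_{t+u}$. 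As $B$ is Lipschitz it is differentiable a.e., and its derivative equals this right derivative at every $t$ that is a continuity point of $S$ with $t-q$ not a jump time, i.e. for a.e. $t$; the $A$-term is handled the same way with Lebesgue measure (no atoms, hence no exceptional times), producing $\int_{-p}^0 f_\mu(u)X_{t+u}\,\du$, and summing yields $\xi(X)_t$.

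The main obstacle will be the differentiation of the Stieltjes term $B$. Because $S$ carries atoms (the jumps of $[L,L]$), for a generic kernel the map $t\mapsto\int_{(-q,t]}(\cdots)\,\dS_s$ would be merely of bounded variation and its ``derivative'' would register the jumps of $S$; Lipschitz continuity and the clean formula for $\xi(X)$ are rescued only by two features coming out of part (a): the kernel vanishes on the diagonal, so mass newly swept in as $t$ grows enters with weight $O(h)$ and contributes nothing to the limit, and $F_\nu$ is Lipschitz in $t$ uniformly in $s$, so the reweighting of mass already present is $O(h)$ and passes to the limit by dominated convergence. Verifying these two facts, together with the measure-zero bookkeeping for the exceptional set of times, is where the real work lies; the remainder is routine Fubini and fundamental-theorem-of-calculus manipulation.
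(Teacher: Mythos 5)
Your proposal is correct and follows essentially the same route as the paper: both work pathwise with Lebesgue--Stieltjes integrals, deduce local Lipschitz continuity of $\Xi(X)$ from the kernel bounds of part (a), and obtain $\xi(X)$ by differentiating under the integral sign with dominated convergence. The only difference is that where the paper invokes a ready-made differentiation lemma (Klenke, Theorem 6.28), you unpack it by hand via the interior/boundary splitting of difference quotients, which in fact makes the bookkeeping of the $dS$-atoms at $s\in\{t,\,t-q\}$ (treated rather lightly in the paper) more explicit.
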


	We now give two different representations of $X$, one as a stochastic integral equation with Volterra type kernels, akin to the fractional L\'evy process, and one in the form of a stochastic functional differential equation driven by the quadratic variation process $S$. 
%%%%%%%%%%%%%%%%%%%%%%%%%%%%%%%%%%%%%%%%%%%%%%%%%%%
	\begin{theorem}\label{theorem/1/semimartingale}
	Let $X$ be the unique strong solution to the CDGARCH$(p,q)$ variance equation \eqref{equation/1/model/CDGARCH_X}, with parameters specified in \eqref{equation/1/model/theta}, \eqref{equation/1/model/munu} and driving noise $S$ defined in \eqref{equation/1/preliminaries/drivingnoise/S}. 
	Then
		the process $X$ satisfies the stochastic (Volterra) integral equation
		\begin{align*}
			\numberthis\label{equation/1/semimartingale/XwithFkernel}
			X_t   =  X_0 &+ 
			\int_{0}^t \big(\eta- c_\mu X_{s}\big) d{s}
			+ c_\nu\int_{0+}^t X_{s-} dS_{s}
			+ \Xi(X)_t, \quad t\ge 0,
		\end{align*}
		which can be rewritten into a stochastic functional differential equation
		\begin{align*}
			dX_t &  =  \big(\eta -c_\mu X_t+ \xi(X)_t\big) dt + c_\nu X_{t-}  dS_t, \quad t\ge 0,
			\numberthis\label{equation/1/semimartingale/XSDDE}
		\end{align*}
		with initial condition $X_u = \Phi_u$ on $[-r,0]$. In particular, $X$ is a semimartingale and has paths of  finite variation on compacts. \qed
	
	\end{theorem}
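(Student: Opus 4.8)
The plan is to work directly from \eqref{equation/1/model/CDGARCH_X} by substituting the explicit specifications \eqref{equation/1/model/theta} and \eqref{equation/1/model/munu}, and splitting each delay measure into its atomic part at $0$ and its absolutely continuous part on $[-r,0)$. Since $\Phi_t=\Phi_0=X_0$ for $t\ge 0$, the term $\theta_t$ contributes $X_0+\eta t$ on $[0,\infty)$. The Dirac mass $-c_\mu\delta_0$ in $\mu$ produces $-c_\mu\int_0^t X_s\,ds$, and the Dirac mass $c_\nu\delta_0$ in $\nu$ produces $c_\nu\int_{0+}^t X_{s-}\,dS_s$ (the exclusion of $0$ coming from the ``$u+$'' lower limit at $u=0$); combined with $\eta t$ these assemble precisely into the drift $\int_0^t(\eta-c_\mu X_s)\,ds$ and the noise term of \eqref{equation/1/semimartingale/XwithFkernel}. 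It then remains to match the two absolutely continuous pieces $\int_{-p}^0 f_\mu(u)\bigl(\int_u^{t+u} X_s\,ds\bigr)\,du$ and $\int_{-q}^0 f_\nu(u)\bigl(\int_{u+}^{t+u} X_{s-}\,dS_s\bigr)\,du$ with the two summands of $\Xi(X)_t$.

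The crux is the interchange of the order of integration in these two double integrals. For the $f_\mu$ term I would apply the classical Fubini--Tonelli theorem on the region $\{(u,s):-p\le u\le 0,\ u\le s\le t+u\}$: fixing $s$ and integrating out $u$ yields the weight $\int_{[-p\vee(s-t),\,s\wedge 0]} f_\mu(u)\,du=F_\mu(t,s)$, with $s$ ranging over $[-p,t]$, giving $\int_{[-p,t]}F_\mu(t,s)X_s\,ds$. For the $f_\nu$ term I would exploit that $S$ is a subordinator, hence of finite variation, so that $\int X_{s-}\,dS_s$ is a pathwise Lebesgue--Stieltjes integral; this lets me invoke Fubini pathwise with respect to the finite product measure $du\otimes dS_s(\omega)$ instead of the full stochastic Fubini theorem. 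Joint measurability of $(u,s)\mapsto f_\nu(u)X_{s-}$ and integrability follow from continuity of $f_\nu$, finiteness of the measures on the compact domain, and the pathwise local boundedness of the c\`adl\`ag process $X$ (which holds since $X\in\cS^2$). The constraint $u<s$ induced by the ``$u+$'' limit forces $s>-q$, so the outer variable ranges over $(-q,t]$, and integrating out $u$ produces $F_\nu(t,s)$, yielding $\int_{(-q,t]}F_\nu(t,s)X_{s-}\,dS_s$. I expect the \emph{main obstacle} to be exactly this step: tracking the boundary conventions (the ``$u+$'' versus ``$u$'' endpoints, the resulting half-open range $(-q,t]$, and the vanishing $F_\nu(t,-q)=0$) and confirming that the finite-variation structure of $S$ legitimately reduces the stochastic Fubini to a pathwise one, the single-point $u$-intervals and the countably many jump times of $S$ being $du$-null and hence immaterial. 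Together these identifications deliver \eqref{equation/1/semimartingale/XwithFkernel}.

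Finally, to pass to \eqref{equation/1/semimartingale/XSDDE} I would invoke Proposition \ref{proposition/1/semimartingale}\eqref{proposition/1/semimartingale/Xi}: since $\Xi(X)$ has locally Lipschitz (hence absolutely continuous) paths with Lebesgue-a.e.\ derivative $\xi(X)$, and since $\Xi(X)_0=0$ (because $F_\mu(0,s)=F_\nu(0,s)=0$ on the relevant $s$-range, the integration collapsing to a single point), one has $\Xi(X)_t=\int_0^t\xi(X)_s\,ds$. Substituting this into \eqref{equation/1/semimartingale/XwithFkernel} and reading off the absolutely continuous drift yields \eqref{equation/1/semimartingale/XSDDE}. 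The semimartingale and finite-variation assertions are then immediate: the drift $\int_0^t(\eta-c_\mu X_s+\xi(X)_s)\,ds$ is absolutely continuous (its integrand being locally integrable, as it is the a.e.\ derivative of a locally Lipschitz primitive), while $c_\nu\int_{0+}^\cdot X_{s-}\,dS_s$ is of finite variation because $S$ is increasing; being a sum of finite-variation processes, $X$ is a finite-variation semimartingale on compacts.
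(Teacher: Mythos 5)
Your proposal is correct and follows essentially the same route as the paper's proof: splitting $\mu$ and $\nu$ into their Dirac and absolutely continuous parts, interchanging the order of integration via (pathwise) Fubini to produce the Volterra kernels $F_\mu$, $F_\nu$, invoking Proposition \ref{proposition/1/semimartingale}(\ref{proposition/1/semimartingale/Xi}) to convert $\Xi(X)$ into $\int_0^t \xi(X)_s\,ds$, and concluding the semimartingale/finite-variation claim from the finite variation of $S$. Your added care about the pathwise Lebesgue--Stieltjes interpretation of the $dS$ integral and the boundary conventions is a slightly more explicit version of the justification the paper gives in one line.
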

%%%%%%%%%%%%%%%%%%%%%%%%%%%%%%%%%%%%%%%%%%%%%%%%%%%
	\begin{remark}\label{remark/1/semimartingale/LSSconnection}
		\begin{enumerate}[(a)]
			\item The stochastic process $t\map \Xi(X)_t$ in \eqref{equation/1/semimartingale/Xi(X)VolterraKernel}
			is an example of a \textit{convoluted L\'evy process}, studied in \citet{BenderMarquardt2008}. In fact, with a different choice of kernel, we could recover a fractional L\'evy process considered in \citet*{BenassiCohenIstas2004} and \citet{Marquardt2006}. \citet*{JaberLarssonPulido2017} also considered a similar process with convolution type kernels and Brownian driving noise, with applications to modeling asset volatility.

			\item Without the term $\xi(X)$, the equation \eqref{equation/1/semimartingale/XSDDE} is exactly the SDE for the COGARCH process, stated previously in \eqref{equation/1/model/COGARCH}. We can therefore interpret the CDGARCH process as a COGARCH process with an extra stochastic delay-type drift term $\xi(X)$, which depends on the sample paths of both $X$ and $S$. The CDGARCH process is hence not a Markovian process. 

		\item	
		The process $\xi(X)$ has been studied by \citeauthor{Barndorff-NielsenBenthVeraart2012} in multiple works over the past few years to model stochastic volatility and turbulent flows. In particular $\xi(X)$ is referred to as a {\it volatility modulated L\'evy driven Volterra (VMLV)}, or more specifically, a {\it L\'evy semi-stationary (LSS) process} in \cite{Barndorff-NielsenBenthVeraartEtAl2013}. Furthermore, these processes are special cases of a much more general class of objects called Ambit fields. We refer to \citet*{Barndorff-NielsenBenthVeraart2012} and \citet{Podolskij2015} for surveys of relevant results.
		\qed
		\end{enumerate}
	\end{remark}

	\begin{remark}\label{remark/1/solution/nobrownian}
		Observe from \eqref{equation/1/preliminaries/drivingnoise/S} that the Brownian component of $L$ appears in $S$ as a positive drift $\sigma_L^2(t+r)$. In light of equations \eqref{equation/1/semimartingale/xi} and \eqref{equation/1/semimartingale/XSDDE}, we could absorb this drift into the constant $c_\mu$ and the function $f_\mu$, by replacing $c_\mu $ with $ c_\mu - \sigma_L^2 c_\nu$ and $f_\mu$ with $f_\mu + \sigma_L^2 f_\nu$ and changing the region of integration accordingly. Therefore from here onwards, without any loss of generality, we will assume $\sigma_L^2=0$ so that $S$ is a pure jump L\'evy process.
	\end{remark}
	% It follows from \eqref{equation/1/semimartingale/Xi} that the process $(\Xi(X)_t)_{t\ge 0}$ is a finite variation semimartingale with jumps given by $\Delta \Xi(X)_t = f_\nu(-q)X_{t-q-}\Delta S_{t-q} + f_\nu(0)X_{t-}\Delta S_t$, $t\ge 0$.
	% Furthermore, if we assume the normalization $f_\nu(-q)=0$, then $\Delta \Xi(X)_t = f_\nu(0)X_{t-}\Delta S_t$.
%%%%%%%%%%%%%%%%%%%%%%%%%%%%%%%%%%%%%%%%%%%%%
	% \begin{remark}\label{remark/1/solution/segmentprocess}
	% From the expression \eqref{equation/1/semimartingale/XSDDE}, it is easy to see that the process $(X_t)_{t\ge 0}$ cannot be a Markov process. The usual practice is to ``lift'' $X$ up into a functional space so that it has the Markov property. 
	% Let the $\scD$-valued \textit{segment process} $(X_{(t)})_{t\ge 0}$ 
	% be defined by $X_{(t), u}^\Phi=X_{t+u}^\Phi, u\in[-r,0]$, where $X_{\bigcdot}^\Phi$ is the solution to \eqref{equation/1/semimartingale/XSDDE} with initial condition $\Phi$.
	% It is straightforward to show that $(X_{(t)})_t$ is a Markov process on $\scD$ (see for instance Prop. 3.3 of \cite{ReissRiedlevanGaans2006}). 
	% \end{remark}
%%%%%%%%%%%%%%%%%%%%%%%%%%%%%%%%%%%%%%%

\subsection{Path properties of the solution}\label{section/1/solution/pathproperties}
	In Section \ref{section/1/solution/compoundpoisson} we show that the solution $X$ to \eqref{equation/1/semimartingale/XSDDE} driven by a general $S$, possibly with infinite activity, can be approximated by solutions $X^n$ to \eqref{equation/1/semimartingale/XSDDE} driven by compound Poisson processes $(S^n)_n$ that approximate $S$.
	We thus formulate some path properties of $X$ in the case when $L$ (and hence $S$) is a compound Poisson process, and compare them to the COGARCH process. 
%%%%%%%%%%%%%%%%%%%%%%%%%%%%%%%%%%%%%%%%%%%%%%%%%%%%%%%%%%%%
	\begin{proposition}\label{proposition/1/semimartingale/pathofX}
		Let $(L_t)_{t\ge 0}$ be a compound Poisson process, i.e.  $\sigma_L =0$ and the L\'evy measure $\Pi_L$ of $L$ has finite total mass. Let $-r< T_0<T_1<\ldots$ and $\Delta L_t$ be the jump times and sizes of $L$.
		\begin{enumerate}[(a)]
			\item \label{proposition/1/semimartingale/pathofX/jumps}
			The jumps of the CDGARCH$(p,q)$ variance process $X$ are driven by the jumps of the quadratic variation process $S=[L,L]$, i.e., 
			\begin{align*}
				\Delta X_t = c_\nu X_{t-}\Delta S_t = c_\nu X_{t-}(\Delta L_t)^2, \quad t\ge -r.
			\end{align*}

			\item \label{proposition/1/semimartingale/pathofX/betweenjumps}
			If $f_\nu(-q)=0$, then on each $[T_n, T_{n+1})$, $\xi(X)$ is continuous and $X$ is continuously differentiable, with derivative given by
			\begin{align*}
				\frac{d}{dt} X_t = \eta -c_\mu X_t
				+\xi(X)_t, \quad t\in (T_j, T_{j+1}).
			\end{align*}
			Furthermore, when $p>0$ and $q=0$, between two consecutive jump times, the CDGARCH$(p,0)$ variance process $X$ follows the deterministic delay differential equation
			\begin{align*}
				\frac{d}{dt}X_t = \eta -c_\mu X_t + \int_{-p}^0 f_\mu(u) X_{t+u} du, 
				\quad t\in (T_j, T_{j+1}).\numberthis\label{equation/1/semimartingale/pathofX/CObetweenjumps}
			\end{align*}
			In the case $p=q=0$, i.e. when $X$ is a COGARCH process, $X$ decays exponentially between its jump times, and we have a closed form solution 
			\begin{equation*}
				X_t = \frac{\eta}{c_\mu} + \left(X_{T_{j}+} - \frac{\eta}{c_\mu}\right)e^{-c_\mu(t-T_j)},
				\quad t\in (T_j, T_{j+1}).  \qed
			\end{equation*}
		\end{enumerate}
	\end{proposition}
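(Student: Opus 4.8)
The plan is to work entirely from the representations established in Theorem \ref{theorem/1/semimartingale}, which already guarantee that $X$ is a finite-variation semimartingale. For part \ref{proposition/1/semimartingale/pathofX/jumps} I would read off the jumps of $X$ directly from the Volterra integral equation \eqref{equation/1/semimartingale/XwithFkernel}. In that equation the constant $X_0$ and the Lebesgue integral $\int_0^t(\eta - c_\mu X_s)\,ds$ are continuous in $t$, and by Proposition \ref{proposition/1/semimartingale} the process $\Xi(X)$ has Lipschitz, hence continuous, sample paths; none of these three terms jumps. The only remaining term is the stochastic integral $c_\nu\int_{0+}^t X_{s-}\,dS_s$, whose jump at $t$ is $c_\nu X_{t-}\Delta S_t$. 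Thus $\Delta X_t = c_\nu X_{t-}\Delta S_t$, and since $\sigma_L = 0$ by Remark \ref{remark/1/solution/nobrownian} the subordinator is $S_t = \sum_{s\le t}(\Delta L_s)^2$ with $\Delta S_t = (\Delta L_t)^2$, which gives part \ref{proposition/1/semimartingale/pathofX/jumps}.

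For part \ref{proposition/1/semimartingale/pathofX/betweenjumps}, the key observation is that on each interval $(T_j, T_{j+1})$ the subordinator $S$ has no jumps and, since $\sigma_L=0$, no drift, so $dS_s = 0$ there and the integral equation \eqref{equation/1/semimartingale/XwithFkernel} collapses to
\begin{align*}
	X_t = X_{T_j+} + \int_{T_j}^t \big(\eta - c_\mu X_s + \xi(X)_s\big)\,ds, \quad t\in(T_j, T_{j+1}).
\end{align*}
By part \ref{proposition/1/semimartingale/pathofX/jumps}, $X$ is continuous on this interval. It therefore suffices to prove that $\xi(X)$ is continuous on $(T_j, T_{j+1})$, after which the fundamental theorem of calculus yields $X\in C^1$ with the asserted derivative. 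The first summand $\int_{-p}^0 f_\mu(u) X_{t+u}\,du$ of $\xi(X)$ is continuous in $t$ for every choice of parameters, by dominated convergence using the local boundedness of the c\`adl\`ag path $X$ together with the continuity of $f_\mu$.

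The main obstacle, and where the hypothesis $f_\nu(-q)=0$ is consumed, is the continuity of the second summand, which after the substitution $s=t+u$ I would rewrite as the finite sum $\sum_{T_n\in(t-q,t]} f_\nu(T_n-t)\,X_{T_n-}\,\Delta S_{T_n}$ over the jumps of $S$ in the window $(t-q,t]$. As $t$ ranges over $(T_j,T_{j+1})$ no new jump enters at the right endpoint, because $t$ is never a jump time there, and each weight $f_\nu(T_n-t)$ varies continuously in $t$; the only candidate discontinuity occurs as $t$ increases through $T_n+q$ for some earlier jump $T_n\le T_j$, the instant at which that jump exits the window carrying the limiting weight $f_\nu(T_n-t)\to f_\nu(-q)$. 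The hypothesis $f_\nu(-q)=0$ exactly annihilates this contribution, so the sum, and hence $\xi(X)$, is continuous on $(T_j,T_{j+1})$.

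Finally, the two displayed special cases are immediate specializations. When $q=0$ the measure $\nu$ reduces to $c_\nu\delta_0$, so the $f_\nu$-integral in \eqref{equation/1/semimartingale/xi} is absent and one is left with the deterministic delay equation \eqref{equation/1/semimartingale/pathofX/CObetweenjumps}; in this case the condition $f_\nu(-q)=0$ is vacuous since there is no continuous $S$-delay. When $p=q=0$ both densities vanish, $\xi(X)\equiv 0$, and the governing relation becomes the linear ODE $\frac{d}{dt}X_t = \eta - c_\mu X_t$, which integrates on $(T_j,T_{j+1})$ with initial value $X_{T_j+}$ to the stated exponential closed form.
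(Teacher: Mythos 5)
Your proposal is correct and follows essentially the same route as the paper: both read off the jumps of $X$ from the representations in Theorem \ref{theorem/1/semimartingale}, then use the constancy of $S$ between jump times together with the normalization $f_\nu(-q)=0$ to conclude that $\xi(X)$ (and hence $\dot X$) is continuous there, specializing to the deterministic DDE and the explicit exponential solution when $q=0$ and $p=q=0$. The only difference is one of detail: where the paper compresses the key step into the single claim $\Delta \xi(X)_t = f_\nu(0)X_{t-}\Delta S_t$, you justify it by explicitly identifying and annihilating the potential ``exit'' discontinuities at times $T_n+q$, which is exactly the content of that claim.
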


	It is clear that the jump structure of the variance process $X$ is the same for the COGARCH process and the COGARCH$(p,q)$ process. However, the behavior of $X$ in between jumps is very different. 
	In fact, from Proposition \ref{proposition/1/semimartingale/pathofX}, we immediately see two levels of generalization from the COGARCH process, which decays exponentially between jumps. We illustrate this graphically by simulating sample paths of the different CDGARCH processes via a simple Euler scheme.

	\begin{figure}[H]
		\centering\vspace{-4mm}\hspace{-0mm}\includegraphics[width=\textwidth]{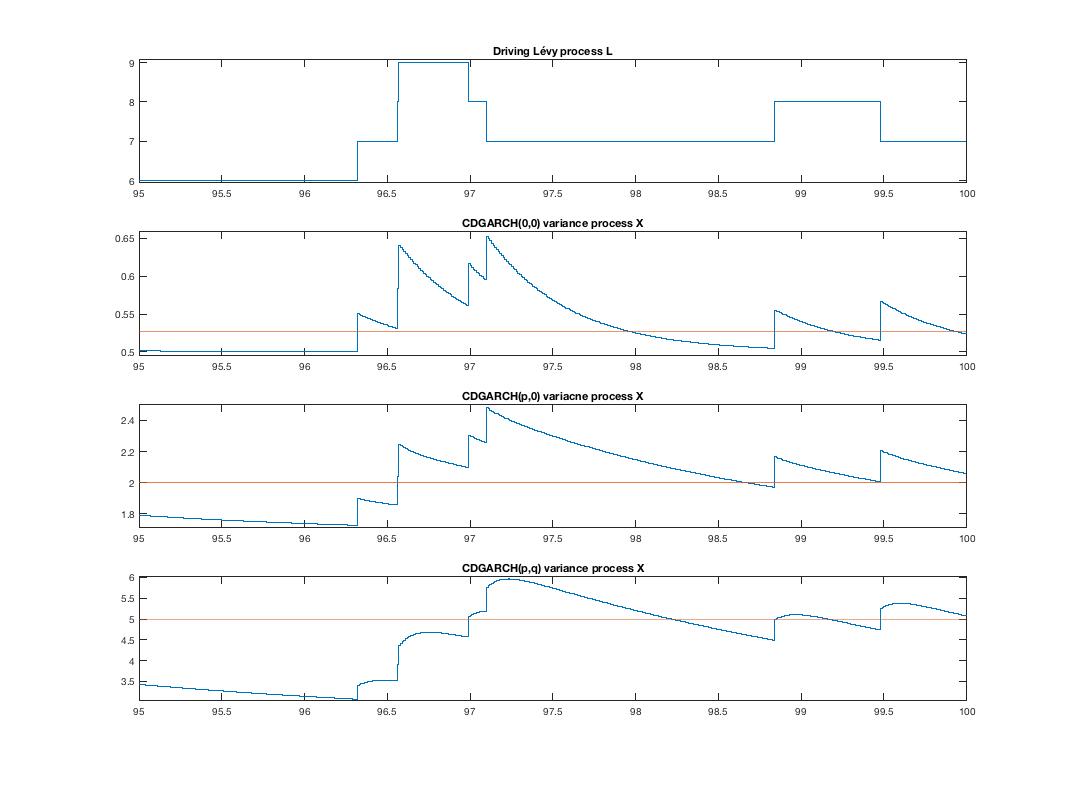}\vspace{-10mm}\caption{Driving noise $L$, CDGARCH(0,0), CDGARCH(p,0) and CDGARCH(p,q) variance processes}\label{fig/1/semimartingale/propogation}
	\end{figure}

	The top figure in Figure \ref{fig/1/semimartingale/propogation} is the simulated path of a compound Poisson process $L$ with unit intensity and jumps equal to $\pm 1$ with equal probability. The processes below are the COGARCH (or CDGARCH(0,0)), the CDGARCH(p,0) and the CDGARCH(p,q) variance processes, driven by the same realization of $L$.  The horizontal lines are the theoretical (stationary) means of the variance processes, computed in Section \ref{section/1/moments}. 
	The delay functions $f_\mu$ and $f_\nu$ are chosen to be exponential, and comparable parameters are chosen between all three processes. 

	In the CDGARCH$(p,0)$ case, the  process $X$ follows a deterministic differential equation (given in Proposition \ref{proposition/1/semimartingale/pathofX}) between the jumps of the driving noise $L$, but the decay towards the baseline level is slower than the exponential function, indicating a longer memory effect. 
	In the case $p,q>0$, $X$ is no longer deterministic between jumps, but is given by a continuous process of finite variation that depends on $\{X_u, u\in [T_j-p, T_j]\}$ as well as $\{\Delta S(u), u\in [T_j-q, T_j]\}$. For this particular simulated process, $X$ increases immediately after a jump, then starts decaying towards the baseline level. Depending on choices and sizes of  $f_\nu$, it is possible to have very different behaviors between jumps.

	We now give conditions for the positivity of $X$ when $S$ is a compound Poisson process, which we extend to the general case in the next section.
	\begin{proposition}\label{proposition/1/solution/CPPpositivity}
		Let $S$ be a compound Poisson process satisfying Assumption \ref{assumptions/1/existence}(\ref{assumptions/1/existence/S}) and $X$ be the unique solution to \eqref{equation/1/semimartingale/XSDDE} driven by $S$. Suppose $\eta>0$, $c_\mu >\norm{f_\mu}_{L^1}$ and 
		\begin{align*}
		 	X_u\ge x^- := \frac{\eta}{c_\mu - \norm{f_\mu}_{L^1}}>0,\numberthis\label{equation/1/semimartingale/lowerbound}
		 	\quad\forall u\in[-r,0].		
		\end{align*} 
		Then $X_t\ge x^-$ for all $t>0$, i.e. $X$ is positive and bounded away from zero.\qed
	\end{proposition}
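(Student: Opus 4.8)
The plan is to show that the level $x^-$ is an impenetrable lower barrier for $X$, exploiting the fact that the drift of $X$ is nonnegative whenever $X$ sits exactly at $x^-$ while its entire past has remained above $x^-$; the value of $x^-$ in \eqref{equation/1/semimartingale/lowerbound} is tuned precisely so that the constant part of the drift cancels at this barrier. First I would record the structural consequences of $S$ being compound Poisson. Writing $T_1<T_2<\cdots$ for the (locally finite) jump times of $S$, the integral $c_\nu\int X_{s-}\,dS_s$ charges only these times, so between consecutive jumps $X$ is absolutely continuous and satisfies $\frac{d}{dt}X_t=\eta-c_\mu X_t+\xi(X)_t$ for a.e.\ $t$ (this is Proposition \ref{proposition/1/semimartingale/pathofX}; only absolute continuity and the a.e.\ identity are needed, so $f_\nu(-q)=0$ is not required here). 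Moreover $\Delta X_t=c_\nu X_{t-}(\Delta L_t)^2\ge 0$ whenever $X_{t-}\ge 0$, so every jump of $X$ is upward, and a downcrossing of the level $x^-$ can occur only during the continuous evolution. Finally, since $S$ is pure jump, the second term of $\xi(X)_t$ reduces to the finite sum $\sum_{T_k\in(t-q,t]}f_\nu(T_k-t)X_{T_k-}\Delta S_{T_k}$, which is nonnegative as soon as $X\ge 0$ on $(t-q,t]$, because $f_\nu\ge0$ and $\Delta S\ge0$.

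The heart of the matter is a comparison for the shifted process $g:=X-x^-$. Substituting $X=g+x^-$ into the drift and using the defining identity $\eta-x^-(c_\mu-\norm{f_\mu}_{L^1})=0$ together with $f_\mu\ge0$, the constant terms cancel exactly, and at a.e.\ time in each inter-jump interval, provided $X\ge0$ on the relevant window, I obtain the quasimonotone delay differential inequality
\[
\frac{d}{dt}g(t)\ \ge\ -c_\mu\,g(t)+\int_{-p}^0 f_\mu(u)\,g(t+u)\,du .
\]
Together with $g\ge0$ on $[-r,0]$ and the upward jumps, this inequality should force $g\ge0$ for all $t$; the nonnegativity of the kernel $f_\mu$ is exactly what makes the delayed term cooperative.

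To make the comparison rigorous and dispose of the usual touch-and-go difficulty at a first zero, I would fix $T>0$ and introduce the barrier $g_\epsilon(t):=g(t)+\epsilon e^{\lambda t}$ with $\lambda>0$ and $\epsilon>0$ small enough that $X>0$ wherever $g_\epsilon\ge0$ on $[0,T]$ (keeping all jumps upward and the $f_\nu$-sum nonnegative). A short computation then upgrades the inequality to the strict form
\[
\frac{d}{dt}g_\epsilon(t)\ \ge\ -c_\mu\,g_\epsilon(t)+\int_{-p}^0 f_\mu(u)\,g_\epsilon(t+u)\,du+\epsilon e^{\lambda t}\Big(c_\mu+\lambda-\int_{-p}^0 f_\mu(u)e^{\lambda u}\,du\Big),
\]
whose last term is strictly positive because $c_\mu>\norm{f_\mu}_{L^1}\ge\int_{-p}^0 f_\mu(u)e^{\lambda u}\,du$ for $\lambda>0$. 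If $g_\epsilon$ had a first zero at some $t_0\in(0,T]$, then $t_0$ would be a continuity point (jumps being upward), with $g_\epsilon\ge0$ on $[-r,t_0]$; the displayed strict inequality then yields $\frac{d}{dt}g_\epsilon(s)>0$ for a.e.\ $s$ in a left-neighborhood of $t_0$ (the term $-c_\mu g_\epsilon(s)$ being negligible as $g_\epsilon(s)\to0$), and integrating gives $g_\epsilon(t_0)>g_\epsilon(s)>0$, contradicting $g_\epsilon(t_0)=0$. Hence $g_\epsilon>0$ on $[-r,T]$; letting $\epsilon\downarrow0$ gives $g\ge0$, and then $T\uparrow\infty$ finishes the proof.

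I expect the main obstacle to be precisely this final comparison step: verifying that the barrier preserves the differential inequality, that $\epsilon$ can be chosen small uniformly on $[0,T]$ so that all jumps stay upward and the $f_\nu$-contribution stays nonnegative, and handling the first-zero analysis with only almost-everywhere differentiability of $X$ between jumps. The perturbation by $\epsilon e^{\lambda t}$ is the device that converts the delicate boundary touch into the strict inequality above, and the quasimonotonicity furnished by $f_\mu\ge0$ is what renders the delay term harmless throughout.
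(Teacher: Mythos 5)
Your proof is correct, and it rests on the same mechanism as the paper's own argument: the exact cancellation $\eta - x^-\bigl(c_\mu - \norm{f_\mu}_{L^1}\bigr) = 0$ at the barrier, the upward jumps $\Delta X_t = c_\nu X_{t-}(\Delta L_t)^2 \ge 0$, the nonnegativity of the $f_\nu$-part of $\xi(X)$ when the recent past of $X$ is nonnegative, and a first-crossing analysis on the inter-jump intervals where $X$ evolves continuously. The genuine difference is how the crossing point is handled. The paper sets $T'' := \inf\{t>T : X_t < x^-\}$, asserts that necessarily $\dot X_{T''} < 0$, and contradicts this with the drift bound $\dot X_{T''} \ge \eta - c_\mu x^- + x^-\norm{f_\mu}_{L^1} = 0$; strictly speaking, the definition of $T''$ only yields $\dot X_{T''} \le 0$, so the touch-and-go case $\dot X_{T''} = 0$ (which can occur, e.g., when $X$ sits identically at $x^-$ over the window feeding the delay term) is not excluded by that argument. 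Your perturbation $g_\epsilon = (X - x^-) + \epsilon e^{\lambda t}$ is designed precisely to close this gap: it makes the delay differential inequality strict, since $c_\mu + \lambda - \int_{-p}^0 f_\mu(u) e^{\lambda u}\, du > \lambda > 0$, and thereby converts the boundary touch into an honest contradiction, at the modest cost of choosing $\epsilon < x^- e^{-\lambda T}$ so that positivity of $X$ (hence upward jumps and the cooperative $f_\nu$-term) persists wherever $g_\epsilon \ge 0$ on $[0,T]$. Your observation that only absolute continuity between jumps is needed (so the normalization $f_\nu(-q)=0$ of Proposition \ref{proposition/1/semimartingale/pathofX}(\ref{proposition/1/semimartingale/pathofX/betweenjumps}) can be dispensed with here) is also accurate. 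In short, your route is the paper's route carried out more carefully; what it buys is rigor at the single step where the paper's proof is informal.
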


%%%%%%%%%%%%%%%%%%%%%%%%%%%%%%%%%%%%%%%%%%%%%%%%%%
\subsection{Approximation}\label{section/1/solution/compoundpoisson}
	
	Following Remark \ref{remark/1/solution/nobrownian}, we will assume that $S$ takes the form
	\begin{align*}
		S_t = \sum_{-r<s\le t} (\Delta L_s)^2.
		\numberthis\label{equation/1/solution/S without BM}
	\end{align*}
	For each $n\in \N$, define the approximating process $S^n$ by
	\begin{align*}
		S^n_t:= \sum_{-r<s\le t} (\Delta L_s)^2 \ind_{ \{|\Delta L_s| \ge \frac{1}{n}\} }.
		\numberthis\label{equation/1/solution/Sn without BM}
	\end{align*}
	Then $(S^n)_n$ is a sequence of compound Poisson processes satisfying $S^n_t \le S_t$ for all $n\in\N$ and $t\ge -r$. 
	For each $n$, we will consider equation \eqref{equation/1/semimartingale/XSDDE} driven by $S^n$:
	\begin{align*}
		dX^n_t = b^n(X^n)_t dt + c_\nu X^n_{t-} dS_t^n, 
		\numberthis\label{equation/1/solution/approximation/sdden}
	\end{align*}
	where the drift coefficient $b^n:\Rm_+\times \Rm\times \Omega$ is defined as
	\begin{align*}
		b^n(H)_t :=  \eta - c_\mu H_t 
		+ \int_{t-p}^t f_\mu (u-t) H_u du 
		+ \int_{t-q+}^t f_\nu (u-t) H_{u-} dS_u^n.
	\end{align*}
	By Theorem \ref{theorem/1/existence}, Equation \eqref{equation/1/solution/approximation/sdden} has a unique solution $X^n$ in $\cS^2$ for each initial value $\Phi$ satisfying Assumption \ref{assumptions/1/existence}(\ref{assumptions/1/existence/Phi}). Similar to $b^n$, we will write $b$ for the drift coefficient of \eqref{equation/1/semimartingale/XSDDE} driven by $S$. 

	Recall from \cite{Protter2004} that a sequence of processes $(H^n)_n$ converges to $H$ uniformly on compacts in probability (ucp) if for each $t>0$, $\sup_{s\le t} |H_s^n - H_s|$ converges to 0 in probability.
	To show $X^n$ approximates $X$ in the ucp topology, we will need the following set of results, which are interesting in their own right. Let $(U_t)_{t\ge 0}$ be the (finite and increasing) process given by
	\begin{align*}
		U_t :=c_\mu + \norm{f_\mu}_{L^1} + (\norm{f_\nu}_{L^1} + f_\nu(0))S_t^*.
		\numberthis\label{equation/1/approximation/lipschitz process}
	\end{align*}

%%%%%%%%%%%%%%%%%%%%%%%%%%%%%%%%%%%%%%%%%%%%%%%%%dd	
	\begin{proposition}\label{proposition/1/solution/approximation}
		% Let $S$, $(S^n)_{n\in\N}$, $b$, $(b^n)_{n\in\N}$ be defined as above. 
		Let $X$ and $(X^n)_{n\in\N}$ be the unique solutions to \eqref{equation/1/semimartingale/XSDDE} and \eqref{equation/1/solution/approximation/sdden}.

		\begin{enumerate}[(a)]
			\item \label{proposition/1/solution/approximation/Sn to S} 
			For each $t\ge 0$, $S^n$ converges to $S$ in $\norm{\cdot}_t$ and hence in ucp.
			
			\item \label{proposition/1/solution/approximation/functional lipschitz}
			The drift coefficient $b$ is functional Lipschitz (page 256, \cite{Protter2004}) with Lipschitz process $(U_t)_t$ defined in \eqref{equation/1/approximation/lipschitz process}. That is, for every $Y$ and $Z$ in $\cS^2$, we have
			\begin{align*}
			|b(Y)_t - b(Z)_t| \le  U_t \sup_{s\le t} |Y_s - Z_s|, \quad \forall t\ge 0. 	
			\end{align*} 
			Furthermore, for every $n\in\N$, the coefficient $b^n$ is functional Lipschitz with the same $U$. 

			\item \label{proposition/1/solution/approximation/bn to b}
			For each $t\ge 0$, the process $b^n(X)$ converges to $b(X)$ in $\norm{\cdot}_t$ and hence in ucp.
			\qed
		\end{enumerate}
		
	\end{proposition}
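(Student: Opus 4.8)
The plan is to handle the three parts in the order stated, since (b) produces the Lipschitz bound and (a), (c) supply the two convergence inputs (integrator convergence and coefficient convergence) that will later feed a stability theorem for SDEs with functional Lipschitz coefficients.

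For part (a), I would first observe that $S-S^n$ is itself an increasing pure-jump process, namely $(S-S^n)_t = \sum_{-r<s\le t}(\Delta L_s)^2\ind_{\{|\Delta L_s|<1/n\}} = \int_{(-r,t]}\int_{|z|<1/n} z^2\,N(dz,ds)$. Since it is nondecreasing and vanishes at $-r$, its running supremum over $[-r,t]$ is attained at $t$, so $\norm{S-S^n}_t^2 = \Em[(S_t-S^n_t)^2]$. Writing $A_n:=\int_{|z|<1/n} z^2\,\Pi_L(dz)$ and $B_n:=\int_{|z|<1/n} z^4\,\Pi_L(dz)$, I would split the integral into its compensated part and its compensator and apply the It\^o isometry for the small-jump integral to get $\Em[(S_t-S^n_t)^2] = (t+r)B_n + (t+r)^2A_n^2$. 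Under Assumption \ref{assumptions/1/existence}(\ref{assumptions/1/existence/S}) both $\int z^2\,\Pi_L$ and $\int z^4\,\Pi_L$ are finite, so $A_n,B_n\to0$ by dominated convergence, whence $\norm{S-S^n}_t\to0$; $L^2$-convergence for each $t$ gives ucp.

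For part (b), I would use $b(H)_t = \eta - c_\mu H_t + \xi(H)_t$, so in $b(Y)_t-b(Z)_t$ the constant $\eta$ cancels, leaving $-c_\mu(Y_t-Z_t)$, the Lebesgue integral $\int_{t-p}^t f_\mu(u-t)(Y_u-Z_u)\,du$, and the Stieltjes integral $\int_{(t-q,t]} f_\nu(u-t)(Y_{u-}-Z_{u-})\,dS_u$. Setting $D_t:=\sup_{s\le t}|Y_s-Z_s|$, the first two terms are bounded by $(c_\mu+\norm{f_\mu}_{L^1})D_t$ using $f_\mu\ge0$ and the substitution $v=u-t$. For the third, nonnegativity of $f_\nu$ and monotonicity of $S$ let me pull $D_t$ out and reduce matters to the deterministic estimate $\int_{(t-q,t]} f_\nu(u-t)\,dS_u \le (\norm{f_\nu}_{L^1}+f_\nu(0))\,S^*_t$, which I obtain by integration by parts, transferring $dS$ onto the continuous finite-variation kernel $u\mapsto f_\nu(u-t)$: this produces a boundary term at the diagonal $u=t$ carrying the factor $f_\nu(0)$ together with a bulk term controlled by $S^*_t$ and the $L^1$-size of $f_\nu$. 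Combining yields $U_t$. The non-anticipative requirement in Protter's definition of functional Lipschitz is immediate since $b(H)_t$ depends only on $\{H_s:s\le t\}$. The same computation applies verbatim with $S^n$ in place of $S$; because $S^n\le S$ and both are increasing, $(S^n)^*_t\le S^*_t$, so the identical process $U$ serves for every $b^n$.

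For part (c), I note the coefficients differ only through the $dS$-term, so $b(X)_t-b^n(X)_t = \int_{(t-q,t]} f_\nu(u-t)X_{u-}\,d(S-S^n)_u$, an integral against the vanishing small-jump measure. Bounding $|f_\nu|$ by $\norm{f_\nu}_\infty$ (finite, as $f_\nu$ is continuous on a compact) and using that $S-S^n$ is increasing, I get $\sup_{s\le t}|b(X)_s-b^n(X)_s| \le \norm{f_\nu}_\infty\int_{(-r,t]}|X_{u-}|\,d(S-S^n)_u$. Splitting this integral into compensated part and compensator, the It\^o isometry contributes $B_n\int_{-r}^t\Em[X_u^2]\,du$ and the compensator contributes $A_n^2\,\Em[(\int_{-r}^t|X_u|\,du)^2]$; both prefactors tend to $0$ as in part (a), while the $X$-factors are finite since $X\in\cS^2$ gives $\int_{-r}^t\Em[X_u^2]\,du \le (t+r)\norm{X}_t^2<\infty$ (with Cauchy--Schwarz for the other). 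Hence $\norm{b(X)-b^n(X)}_t\to0$ and ucp follows. I expect the main obstacle to be the deterministic estimate in (b): converting the Stieltjes integral against the jump-integrator $S$ into a bound featuring $f_\nu(0)$ and an $L^1$-type norm rather than the cruder $\norm{f_\nu}_\infty$, so that the integration by parts absorbs the jumps of $S$ into $S^*_t$ exactly as the stated $U_t$ requires; the second-moment bookkeeping in (c) is then routine given the moment computation from (a).
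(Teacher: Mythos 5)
Your proposal is correct, and for parts (a) and (b) it is essentially the paper's own argument: (a) uses the monotonicity of $S-S^n$ to reduce the supremum to the endpoint and then computes the second moment of the small-jump Poisson integral (your version even includes the compensator-squared term $(t+r)^2A_n^2$ that the paper's displayed identity omits, though this changes nothing in the conclusion), and (b) is the same triangle-inequality estimate followed by the same integration by parts transferring $dS$ onto $f_\nu(\cdot-t)$, with the boundary term $f_\nu(0)S_t$ and the observation $S^n\le S$ handling all $b^n$ simultaneously; you also reproduce the paper's mild abuse in labelling the bulk term by $\norm{f_\nu}_{L^1}$ (strictly it is the total variation of $f_\nu$ on $[-q,0]$), so you match the paper exactly there. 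Where you genuinely diverge is part (c): the paper bounds $|b(X)_t-b^n(X)_t|$ by $\sup_{u\in(t-q,t]}|f_\nu(u-t)X_{u-}|$ times the increment of $S-S^n$ and then simply asserts $\norm{\cdot}_t$-convergence ``by part (a)'', which is a nontrivial leap since it requires $L^2$ control of a product of two random variables each of which is only individually controlled in $L^2$. Your route---keeping the integral $\int_{(-r,t]}|X_{u-}|\,d(S-S^n)_u$, splitting it into compensated martingale plus compensator, and invoking the It\^o isometry together with Cauchy--Schwarz and $X\in\cS^2$ to get the explicit bound $2B_n\int_{-r}^t\Em[X_u^2]\,du + 2A_n^2(t+r)\int_{-r}^t\Em[X_u^2]\,du$---actually closes this gap and yields the stated $\norm{\cdot}_t$-convergence rigorously, at the modest cost of redoing the moment computation from (a) at the level of the $X$-weighted integral.
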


%%%%%%%%%%%%%%%%%%%%%%%%%%%%%%%%%%%%%%%%%%%%%%%%%
	We are now in a position to state the approximation result, which allows us to extend Proposition \ref{proposition/1/solution/CPPpositivity} to a general driving noise $S$, thus showing positivity of the CDGARCH variance process $X$. 
	\begin{theorem}\label{theorem/1/solution/approximation}
		Let $S$ and $(S^n)_n$ be given by \eqref{equation/1/solution/S without BM} and \eqref{equation/1/solution/Sn without BM}. Let $X$ and $(X^n)_n$ be the corresponding solutions of \eqref{equation/1/semimartingale/XSDDE} and \eqref{equation/1/solution/approximation/sdden}. 
		Then as $n\to \infty$, $X^n$ converges to $X$ in ucp.
		\qed
	\end{theorem}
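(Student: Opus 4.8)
The plan is to reduce the ucp assertion to an $L^2$ estimate on a sequence of stopped intervals and close that estimate by Gronwall's inequality, fed by the three parts of Proposition \ref{proposition/1/solution/approximation} and by the uniform control of the compensators of the $S^n$. Writing $\Delta^n := X^n - X$ and subtracting the integral forms of \eqref{equation/1/semimartingale/XSDDE} and \eqref{equation/1/solution/approximation/sdden}, I would use that $X^n$ and $X$ share the initial path $\Phi$ on $[-r,0]$ (so $\Delta^n$ vanishes there) to decompose
\begin{align*}
	\Delta^n_t = \int_0^t \big(b^n(X^n)_s - b^n(X)_s\big)\,ds
	+ c_\nu\int_{0+}^t \Delta^n_{s-}\,dS^n_s + E^n_t,
\end{align*}
where the perturbation term is
\begin{align*}
	E^n_t := \int_0^t \big(b^n(X)_s - b(X)_s\big)\,ds
	+ c_\nu\int_{0+}^t X_{s-}\,d(S^n - S)_s .
\end{align*}
The first two terms are the ``Lipschitz'' terms destined for the Gronwall step, and $E^n$ isolates the genuine error.

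First I would show $\norm{E^n}_T\to 0$. For the drift part, $\sup_{s\le T}\big|\int_0^s (b^n(X)-b(X))_u\,du\big|\le T\sup_{s\le T}|b^n(X)_s-b(X)_s|$, whose $L^2$ norm is at most $T\,\norm{b^n(X)-b(X)}_T\to 0$ by Proposition \ref{proposition/1/solution/approximation}(\ref{proposition/1/solution/approximation/bn to b}). For the stochastic part, the decisive point is that $S-S^n$ is \emph{increasing} by \eqref{equation/1/solution/S without BM}--\eqref{equation/1/solution/Sn without BM}, so the integral is dominated pathwise, $\sup_{s\le T}\big|\int_{0+}^s X_{u-}\,d(S^n-S)_u\big|\le X^*_T\,(S_T-S^n_T)$; Cauchy--Schwarz together with Proposition \ref{proposition/1/solution/approximation}(\ref{proposition/1/solution/approximation/Sn to S}) and $X\in\cS^2$ then gives an $L^2$ bound $\le \norm{X}_T\,\norm{S^n-S}_T\to 0$.

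Next I would localize to tame the random Lipschitz process $U$ of \eqref{equation/1/approximation/lipschitz process}, which is unbounded in general. Set $\tau_m:=\inf\{t\ge0:S^*_t\ge m\}\wedge T$; since $S^*_T<\infty$ a.s. we have $\Pm(\tau_m<T)\to 0$ as $m\to\infty$, and on $[0,\tau_m)$ both $U_s$ and $S^n_s\le S_s$ are bounded by a constant $K_m$. Putting $\phi_n(t):=\Em\big[\sup_{s\le t\wedge\tau_m}|\Delta^n_s|^2\big]$, which is finite because $X,X^n\in\cS^2$, I would bound each piece in $L^2$. The drift term is controlled by functional Lipschitzness (Proposition \ref{proposition/1/solution/approximation}(\ref{proposition/1/solution/approximation/functional lipschitz})) and Jensen's inequality, giving $\lesssim K_m^2\,T\int_0^t\phi_n(s)\,ds$. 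For the stochastic term I split $S^n$ into its martingale part $\td S^n$ and its compensator: Doob's $L^2$ inequality and the It\^o isometry give $\lesssim\kappa_4\int_0^t\phi_n(s)\,ds$ for the martingale part, the crucial observation being that $d\langle\td S^n\rangle_s=\kappa_4^n\,ds$ with $\kappa_4^n=\int_{\{|z|\ge 1/n\}}z^4\,\Pi_L(dz)\le\kappa_4$ \emph{uniformly in $n$} (finite by Assumption \ref{assumptions/1/existence}(\ref{assumptions/1/existence/S})), while the compensator part, with drift rate $\int_{\{|z|\ge 1/n\}}z^2\Pi_L(dz)\le\kappa_2$, is handled like the drift term. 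Collecting the three bounds yields $\phi_n(t)\le C\,\norm{E^n}_T^2+B\int_0^t\phi_n(s)\,ds$ with $B,C$ depending on $m,T$ but \emph{not} on $n$, so Gronwall gives $\phi_n(T)\le C\,\norm{E^n}_T^2\,e^{BT}\to 0$. Finally $\Pm(\sup_{s\le T}|\Delta^n_s|>\varepsilon)\le\Pm(\tau_m<T)+\varepsilon^{-2}\phi_n(T)$; choosing $m$ large and then $n$ large establishes ucp convergence.

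The main obstacle is the stochastic integral $c_\nu\int\Delta^n_{s-}\,dS^n_s$: because the driving noise itself varies with $n$, one must ensure the Gronwall constant $B$ is independent of $n$, which is exactly what the uniform bounds $\kappa_4^n\le\kappa_4$ and $\int_{\{|z|\ge1/n\}}z^2\Pi_L(dz)\le\kappa_2$ deliver. A secondary subtlety is that mere ucp convergence of $S^n$ would not in general permit passage to the limit inside a stochastic integral; here it is the monotone finite-variation structure of $S-S^n$ that lets the noise part of the error $E^n$ be dispatched by a pathwise domination rather than a delicate stochastic-integral continuity argument.
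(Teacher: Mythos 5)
Your route is genuinely different from the paper's. The paper disposes of this theorem in two sentences, citing Proposition \ref{proposition/1/solution/approximation} together with the ucp-stability theorem for functional Lipschitz SDEs (Theorem V.15 of \cite{Protter2004}, extended to finitely many driving semimartingales). You instead re-derive the stability statement by hand: decomposition into a Lipschitz part plus an error $E^n$, localization, and Gronwall. Your version is longer but self-contained, and it confronts explicitly the point the citation leaves implicit, namely that the driver $S^n$ itself varies with $n$ (Protter's theorem perturbs coefficients and initial conditions for a \emph{fixed} driver); your observation that $S-S^n$ is increasing, so the noise part of the error is dominated pathwise, is exactly the right device for this.

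There is, however, a genuine gap in your Step 1, the claim $\norm{E^n}_T\to 0$. After the pathwise bound $\sup_{s\le T}\big|\int_{0+}^s X_{u-}\,d(S^n-S)_u\big|\le X^*_T\,(S_T-S^n_T)$, you invoke Cauchy--Schwarz to get an $L^2$ bound by $\norm{X}_T\,\norm{S^n-S}_T$. No form of Cauchy--Schwarz gives this: for a product one has $\Em[A^2B^2]\le \Em[A^4]^{1/2}\Em[B^4]^{1/2}$, so controlling the $\norm{\cdot}_T$-norm (an $L^2$ norm) of $X^*_T(S_T-S^n_T)$ requires \emph{fourth} moments of $X^*_T$, which are not available: Assumptions \ref{assumptions/1/existence} and Theorem \ref{theorem/1/existence} only give $X\in\cS^2$, and $X^*_T$ is not independent of $S_T-S^n_T$, so the expectation cannot be split. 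What your bound actually yields is convergence of $\sup_{s\le T}|E^n_s|$ in $L^1$ (hence ucp), which is not enough to feed the Gronwall inequality for $\phi_n$, so the chain $\phi_n(T)\le C\norm{E^n}_T^2 e^{BT}\to 0$ breaks as written.

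The fix lives inside your own scheme: let the localization absorb $X^*$ as well. Set $\tau_m:=\inf\{t\ge 0: S_t\ge m \text{ or } X^*_t\ge m\}\wedge T$; since $S_T<\infty$ and $X^*_T<\infty$ a.s., still $\Pm(\tau_m<T)\to 0$. Replace $\norm{E^n}_T^2$ by the stopped quantity $\Em\big[\sup_{s\le \tau_m}|E^n_s|^2\big]$: for $s\le\tau_m$ one has $\sup_{u<s}|X_u|\le m$, hence $\sup_{s\le\tau_m}\big|\int_{0+}^s X_{u-}\,d(S^n-S)_u\big|\le m\,(S_T-S^n_T)$, whose $L^2$ norm vanishes by Proposition \ref{proposition/1/solution/approximation}(\ref{proposition/1/solution/approximation/Sn to S}); the drift part of $E^n$ is handled by Proposition \ref{proposition/1/solution/approximation}(\ref{proposition/1/solution/approximation/bn to b}) exactly as you did. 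The remainder of your Gronwall argument, whose constants are indeed uniform in $n$ thanks to $\kappa_4^n\le\kappa_4$ and $\kappa_2^n\le\kappa_2$, then goes through verbatim with $C\,\Em\big[\sup_{s\le\tau_m}|E^n_s|^2\big]$ in place of $C\norm{E^n}_T^2$.
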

	
	\begin{theorem}\label{theorem/1/solution/positivity}
		Proposition \ref{proposition/1/solution/CPPpositivity} holds for any $S$ of the form \eqref{equation/1/solution/S without BM} satisfying Assumption \ref{assumptions/1/existence}(\ref{assumptions/1/existence/S}). In particular, suppose $\eta>0$ and $c_\mu >\norm{f_\mu}_{L^1}$, then for each $t>0$, $X_t$ is positive and bounded away from zero by $x^-$ defined in \eqref{equation/1/semimartingale/lowerbound}. 
		\qed
	\end{theorem}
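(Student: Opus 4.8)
The plan is to transfer the lower bound already established for compound Poisson driving noise to the general case by approximation, exploiting the ucp convergence from Theorem \ref{theorem/1/solution/approximation} together with the fact that the bound $x^-$ is a closed condition and hence stable under passage to the limit. No new estimate is needed beyond what is already collected in the previous results; the work is entirely in checking that the hypotheses line up across the approximating family.

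First I would observe that each approximating noise $S^n$ from \eqref{equation/1/solution/Sn without BM} is a compound Poisson process, since only jumps of size at least $1/n$ survive the truncation, giving finite jump intensity; moreover $S^n_t \le S_t$ with $S$ square integrable by Assumption \ref{assumptions/1/existence}(\ref{assumptions/1/existence/S}), so each $S^n$ is square integrable as well and thus also satisfies that assumption. The crucial point is that the parameters entering the lower bound, namely $\eta$, $c_\mu$ and $\norm{f_\mu}_{L^1}$, are identical for the equation \eqref{equation/1/solution/approximation/sdden} driven by $S^n$ and for the target equation \eqref{equation/1/semimartingale/XSDDE}, because the truncation only affects the $f_\nu$–$dS^n$ term of the drift $b^n$ and leaves the deterministic part untouched. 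Consequently the constant $x^-$ in \eqref{equation/1/semimartingale/lowerbound} is independent of $n$, and under the standing hypotheses $\eta>0$, $c_\mu>\norm{f_\mu}_{L^1}$, and $\Phi_u \ge x^-$ on $[-r,0]$, Proposition \ref{proposition/1/solution/CPPpositivity} applies verbatim to each $X^n$, yielding $X^n_t \ge x^-$ for all $t>0$ and all $n\in\N$.

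Next I would invoke Theorem \ref{theorem/1/solution/approximation}, which gives $X^n \to X$ in ucp. Fixing $T>0$, ucp convergence implies that $\sup_{s\le T}|X^n_s - X_s| \to 0$ in probability, and hence along some subsequence $(n_k)$ this supremum converges to $0$ almost surely. On the corresponding almost sure event one has $X^{n_k}_s(\omega) \to X_s(\omega)$ for every $s\in[0,T]$, and since $X^{n_k}_s \ge x^-$ for every $k$, the inequality passes to the limit to give $X_s \ge x^-$ for all $s\in[0,T]$. Intersecting these almost sure events over $T\in\N$ yields $X_t\ge x^-$ for all $t>0$ almost surely, which is the claimed conclusion.

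The only genuine subtlety, and the step I would verify most carefully, is the uniformity of the lower bound across the approximating sequence: one must confirm that truncating the small jumps neither changes the constant $x^-$ nor weakens any hypothesis of Proposition \ref{proposition/1/solution/CPPpositivity}, so that a single bound $x^-$ holds simultaneously for every $X^n$. Once this is secured, passing a closed inequality through an almost sure subsequential limit is routine, and in particular no quantitative control on the rate of the ucp convergence is required.
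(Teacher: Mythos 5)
Your proof is correct and takes essentially the same approach as the paper's: apply Proposition \ref{proposition/1/solution/CPPpositivity} to each compound Poisson approximation $X^n$ (observing that $x^-$ does not depend on $n$) and transfer the bound to $X$ via the ucp convergence of Theorem \ref{theorem/1/solution/approximation}. The only difference is the final limit-passing step: the paper fixes $t$, uses convergence in distribution and the Portmanteau theorem on the open set $(-\infty,x^-)$ to get $\Pm(X_t<x^-)\le \liminf_n \Pm(X^n_t<x^-)=0$, whereas you extract an almost surely convergent subsequence on compacts and pass the closed inequality pointwise, which directly gives the simultaneous bound for all $t$ (equivalent to the paper's conclusion by right-continuity of paths).
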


We conclude the current section with simulated paths of the CDGARCH(p,q) price and variance processes.
Figure \ref{fig/1/semimartingale/paths} shows simulated sample paths of the driving noise $L$, the return process $dY$, the CDGARCH process $Y$ and its volatility process $\sqrt{X}$. Note the return process visibly exhibits volatility clustering which is reflected in the process $\sqrt{X}$.
\begin{figure}[H]
		\centering\vspace{-4mm}\hspace{-0mm}\includegraphics[width=\textwidth]{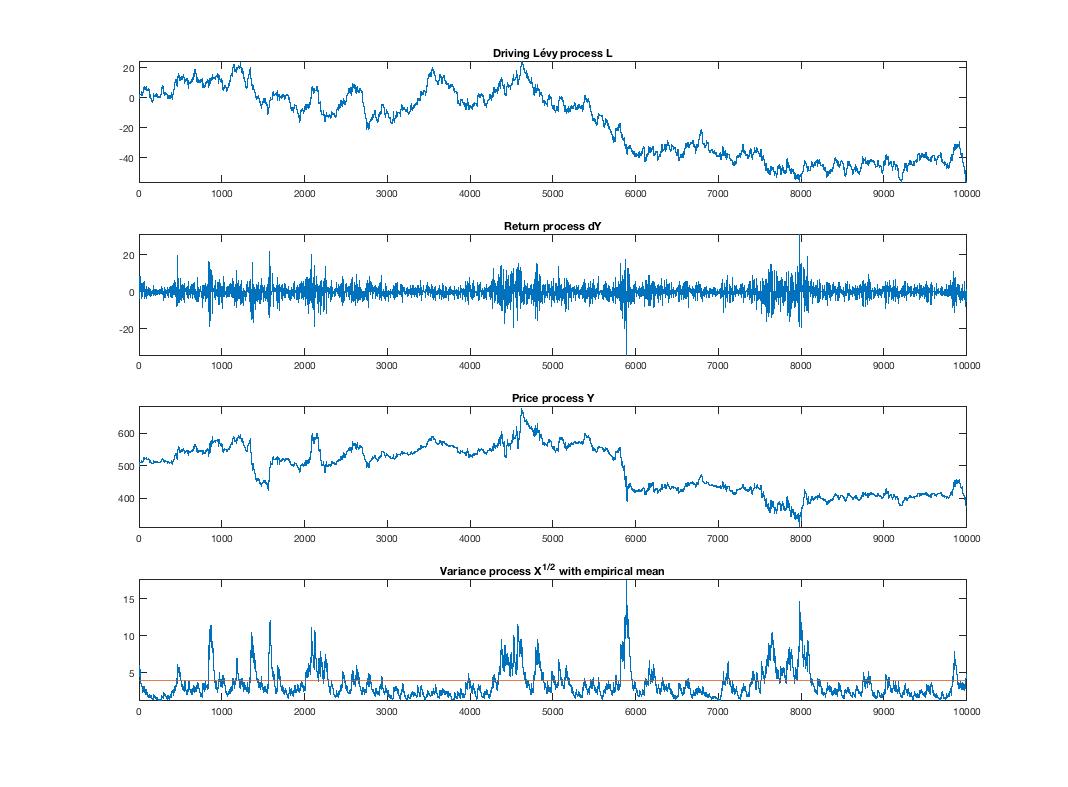}\vspace{-10mm}\caption{Simulated paths of the processes $L$, $dY$, $Y$ and $X$}\label{fig/1/semimartingale/paths}
	\end{figure}

 		%!TEX root = ../main.tex
\section{Moments}\label{section/1/moments}

\subsection{Uniform moment bounds}\label{section/1/moments/uniform bounds}
% Let $\norm{\cdot}_\infty$ be the sup norm on $\scD$ and recall the segment process $(X_{(t)})_{t\ge 0}$ from Remark \ref{remark/1/solution/segmentprocess}. 
We first provide uniform $L^1$ and $L^2$ bounds on $X$. Let $\eta$ be defined as in \eqref{equation/1/model/CDGARCH_X} and the constants $C_1^+, C_1^-, C_2^+, C_2^-$ be given by
	\begin{align*}
		C_1^{\pm} & =  c_\mu - \kappa_2 c_\nu  \pm \Big(\norm{f_\mu}_{L^1} + \kappa_2 \norm{f_\nu}_{L^1}\Big),
		   \\
		C_2^{\pm} & =  c_\mu - \kappa_2 c_\nu - \frac{1}{2}\kappa_4 c_\nu^2  \pm \Big(\norm{f_\mu}_{L^1} + \kappa_4 \norm{f_\nu}_{L^2}\Big).
	\end{align*}

%%%%%%%%%%%%%%%%%%%%%%%%%%%
\begin{theorem}\label{theorem/1/moments/moment bound}
	Suppose Assumptions \ref{assumptions/1/existence} hold and let $(X_t)_{t\ge 0}$ be a positive solution to \eqref{equation/1/semimartingale/XSDDE}. 
	\begin{enumerate}[(a)]
		\item Suppose $\Em[X_0]<\infty$ and $C_1^->0$, or equivalently,\label{theorem/1/moments/moment bound/L1uniform}
		% i.e. $c_\mu > \kappa_2 c_\nu  + \norm{f_\mu}_{L^1} + \kappa_2 \norm{f_\nu}_{L^1}$,
		\begin{align*}
			c_\mu > \kappa_2 c_\nu  + \norm{f_\mu}_{L^1} + \kappa_2 \norm{f_\nu}_{L^1}.
			\numberthis\label{condition/1/moments/L1uniform}
		\end{align*}
		Then $X$ is uniformly bounded in $L^1$ with 
		$\sup_{t}\Em[X_t]  \le {2 \eta}/{C_1^-}+ \Em[X_0]{C_1^+}/{C_1^-}$.
		% \begin{align*}
		% 	\sup_{t}\Em[X_t] &  \le \frac{2 \eta}{C_1^-}+ \frac{C_1^+}{C_1^-}\Em[X_0].	
		% \end{align*}

		\item Suppose $\Em[X_0^2]<\infty$ and $C_2^->0$, or equivalently,\label{theorem/1/moments/moment bound/L2uniform}
		\begin{align*}
			c_\mu > \kappa_2 c_\nu + \frac{1}{2}\kappa_4 c_\nu^2  + \norm{f_\mu}_{L^1} + \kappa_4 \norm{f_\nu}_{L^2}.
		 	\numberthis\label{condition/1/moments/L1uniform}
		\end{align*} 
		Then $X$ is uniformly bounded in $L^2$ with
		$\sup_t \Em[X_t^2]  \le \left({\eta}/{C_2^-}\right)^2 + \Em[X_0^2]{C_2^+}/{C_2^-}$.\qed
		% \begin{align*}
			% \sup_t \Em[X_t^2]  \le \left(\frac{\eta}{C_2^-}\right)^2 + \frac{C_2^+}{C_2^-}\Em[X_0^2].
		% \end{align*}
		
		% \item \label{theorem/1/moments/moment bound/segment process}
		% Furthermore, for $n\in\{1,2\}$,  $\sup_t \Em[X^n_t]<\infty$ implies $\sup_t\Em \norm{X_{(t)}}^2_{\infty}<\infty$. 
	\end{enumerate}
\end{theorem}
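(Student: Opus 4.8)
The plan is to reduce both moment bounds to deterministic differential inequalities for the scalar functions $m(t):=\Em[X_t]$ and $v(t):=\Em[X_t^2]$, obtained by taking expectations in the semimartingale representation \eqref{equation/1/semimartingale/XSDDE}. The analytic engine is the pair of compensator identities for the subordinator $S$: since $\td S_t=S_t-\kappa_2(t+r)$ is a square integrable martingale and $[S,S]$ has compensator $\kappa_4 t$, for any predictable $H$ with $\Em\int_0^t H_s^2\,ds<\infty$ one has $\Em\int_{0+}^t H_{s-}\,dS_s=\kappa_2\int_0^t\Em[H_s]\,ds$ and $\Em\int_{0+}^t H_{s-}^2\,d[S,S]_s=\kappa_4\int_0^t\Em[H_s^2]\,ds$. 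The integrability needed to annihilate the martingale part comes from $X\in\cS^2$ (so $\Em[\sup_{s\le t}X_s^2]<\infty$) together with Assumption \ref{assumptions/1/existence}(\ref{assumptions/1/existence/S}), and positivity of $X$ (Theorem \ref{theorem/1/solution/positivity}) is used throughout so that $m,v\ge0$ and the nonnegative delay kernels $f_\mu,f_\nu$ may be bounded by running suprema.

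For part (a), I would take expectations in \eqref{equation/1/semimartingale/XSDDE} and use \eqref{equation/1/semimartingale/xi} to reach the closed delay integro-differential equation
\[
m'(t)=\eta-(c_\mu-\kappa_2 c_\nu)\,m(t)+\int_{-p}^0 f_\mu(u)\,m(t+u)\,du+\kappa_2\int_{-q}^0 f_\nu(u)\,m(t+u)\,du,\quad t\ge0,
\]
where the second delay term is the compensator of the $dS$-integral hidden inside $\xi(X)$. Since $m\ge0$ and $f_\mu,f_\nu\ge0$, the delay integrals are bounded by $(\norm{f_\mu}_{L^1}+\kappa_2\norm{f_\nu}_{L^1})\sup_{t-r\le s\le t}m(s)$, so $m$ obeys a scalar delay differential inequality whose base damping is $c_\mu-\kappa_2 c_\nu$ and whose delayed feedback has strength $\norm{f_\mu}_{L^1}+\kappa_2\norm{f_\nu}_{L^1}$. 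Because $C_1^->0$ the damping strictly dominates the feedback; an integrating-factor representation of $m$ followed by a Gr\"onwall-type comparison argument for delay differential inequalities then yields the explicit bound $\sup_t\Em[X_t]\le 2\eta/C_1^-+\Em[X_0]\,C_1^+/C_1^-$.

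For part (b), I would apply It\^o's formula to $X_t^2=X_0^2+2\int_{0+}^t X_{s-}\,dX_s+[X,X]_t$. The finite-variation drift contributes $2\int X_{s-}(\eta-c_\mu X_s+\xi(X)_s)\,ds$, the $dS$-integral contributes $2c_\nu\int X_{s-}^2\,dS_s$ with compensator $2\kappa_2 c_\nu\int\Em[X_s^2]\,ds$, and the pure-jump bracket $[X,X]_t=c_\nu^2\int X_{s-}^2\,d[S,S]_s$ has compensator $\kappa_4 c_\nu^2\int\Em[X_s^2]\,ds$; this last term is the source of the $\kappa_4 c_\nu^2$ appearing in $C_2^\pm$. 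After taking expectations and normalizing the factor $2$ coming from It\^o, $v$ satisfies a delay differential inequality with base damping $c_\mu-\kappa_2 c_\nu-\tfrac12\kappa_4 c_\nu^2$. The inhomogeneous term $2\eta\Em[X_{s-}]=2\eta m(s)$ is controlled by $m(s)\le\sqrt{v(s)}$ and Young's inequality $2\eta\sqrt{v}\le\varepsilon v+\eta^2/\varepsilon$, which, after absorbing $\varepsilon v$ into the damping, produces the $(\eta/C_2^-)^2$ term.

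The main obstacle lies in the delay contributions $\Em[X_{s-}\xi(X)_s]$. The deterministic piece $\Em[X_{s-}\int_{-p}^0 f_\mu(u)X_{s+u}\,du]$ is harmless, bounded by $\norm{f_\mu}_{L^1}\sup_{s'\le s}v(s')$ via Cauchy--Schwarz. The delicate piece is the stochastic cross term $\Em[X_{s-}\int_{(s-q,s]}f_\nu(\tau-s)X_{\tau-}\,dS_\tau]$, where $X_{s-}$ is correlated with the delayed $S$-driven feedback and the integration reaches the diagonal $\tau=s$. I would split $dS=\kappa_2\,d\tau+d\td S$; the drift part is a benign delay term, while the martingale part requires estimating the instantaneous covariance through the isometry $\Em[(\int g\,d\td S)^2]=\kappa_4\int g^2$ for $\td S$ together with Cauchy--Schwarz. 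This is exactly where the $L^2$-norm of $f_\nu$ enters: squaring $f_\nu$ inside the isometry and rebalancing by Young's inequality yields the term $\kappa_4\norm{f_\nu}_{L^2}$ in $C_2^\pm$. Once this estimate is in hand, $C_2^->0$ again forces strict dominance of damping over delayed feedback, and the same delay-Gr\"onwall comparison delivers $\sup_t\Em[X_t^2]\le(\eta/C_2^-)^2+\Em[X_0^2]\,C_2^+/C_2^-$.
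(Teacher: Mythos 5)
Your plan is correct and follows essentially the same route as the paper's proof: It\^o's formula for $X_t^n$, taking expectations against the compensators of $S$ and $[S,S]$ (the source of $\kappa_2 c_\nu$ and $\tfrac12\kappa_4 c_\nu^2$), a Cauchy--Schwarz/It\^o-isometry estimate for the cross term $\Em[X_s^{n-1}\xi(X)_s]$ --- this is the paper's Lemma \ref{lemma/1/stationarity/cross}, and it is exactly where $\kappa_4\norm{f_\nu}_{L^2}$ enters --- followed by the Young-type bound on the $\eta$ term and a delay-Gr\"onwall comparison. The comparison step you invoke as ``integrating factor plus Gr\"onwall for delay inequalities'' is precisely what the paper formalizes via the It\^o--Nisio lemmas (Lemma \ref{lemma/1/stationarity/ItoNisio}), yielding the same constants $2\eta/C_1^-+\Em[X_0]C_1^+/C_1^-$ and $(\eta/C_2^-)^2+\Em[X_0^2]C_2^+/C_2^-$.
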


%%%%%%%%%%%%%%%%%%%%%%%%%%%%%%%%%%%%%%%%%%
\subsection{Moment processes}\label{section/1/moments/mean function}
Let $m$ be the mean function of $X$, i.e., $m(t):= \Em [X_t]$, $t\in[-r,\infty)$.
Write $\phi(\cdot)$ for the mean function of the initial segment $\Phi$. 
For $t>0$, define the \textit{segment process} $m_{(t)}:[-r,0]\to\R$ of the process $m$ as $m_{(t)}(u):= m(t+u),  u\in [-r,0]$. For notational simplicity, we will from here onwards write $c_0 := c_\mu  - \kappa_2 c_\nu$ and $f:= f_\mu + \kappa_2 f_\nu$. 

	\begin{theorem}\label{theorem/1/moments/mean}
		If Assumptions \ref{assumptions/1/existence} are satisfied, then for any positive $\phi\in\scD$, 
		\begin{enumerate}[(a)]
			\item \label{theorem/1/moments/meanFDE} The mean function $m:[-r,\infty)\to\Rm$ is finite-valued,
			continuously differentiable on $(0,\infty)$, and satisfies the (deterministic) functional differential equation
			\begin{align*}
					\frac{d}{dt} m(t)
					& =  \eta - c_0 m(t) + \int_{-r}^0 m(t+u) f(u) du,\quad t>0, 
					\numberthis\label{equation/1/moments/meanFDE}	
			\end{align*}
			with the initial condition $m(u) = \phi(u)$ for $u\in[-r,0]$, where $\phi\in \scD$. 

			% \item \label{theorem/1/moments/meanregularity} If the initial condition $\phi$ is $k$-times continuously differentiable, then $m$ is $k+1$-times continuously differentiable  on $(0,\infty)$. In particular, $m$  is infinitely continuously differentiable on $(0,\infty)$ if $\phi$ is infinitely continuous differentiable on $[-r,0]$. 

			\item \label{theorem/1/moments/meanRE} The mean function $m$ also satisfies the renewal equation
			\begin{align*}
				m(t) = \int_0^t \zeta(t-u) m(u) du + h(t),\quad t>0, 
				% \label{equation/1/moments/meanRE}
			\end{align*}
			with initial condition $m(0)= \phi(0)$, with convolution kernel $\zeta$ given by 
			\begin{align*}
			 	\zeta(t)  = - c_0 \ind_{(0,\infty)}(t)+ \int_0^{t\wedge r} f(-u) du, \quad t\in[0,\infty),
			 	\numberthis\label{equation/1/moments/meanNBV}
			\end{align*}
			and forcing function $h:[0,\infty)\to \R$  given by
			\begin{align*}
				h(t) =m_{(0)}(0)+ \frac{\eta}{\zeta(r)}\int_0^t \zeta(u)du
				+\int_0^r (\zeta(t+u) - \zeta(u) )\left(m_{(0)}(-u) +  \frac{\eta}{\zeta(r)}\right) du.\qed
			\end{align*}

		\end{enumerate}
	\end{theorem}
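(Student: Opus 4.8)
The plan is to obtain both deterministic equations by taking expectations in the stochastic representations of $X$ from Theorem \ref{theorem/1/semimartingale}, and then to read off the functional differential equation and the renewal equation from the resulting integral identity. For part (a) I would begin from the integral form of the SFDE \eqref{equation/1/semimartingale/XSDDE},
\begin{align*}
	X_t = X_0 + \int_0^t \big(\eta - c_\mu X_s + \xi(X)_s\big)\,ds + c_\nu \int_{0+}^t X_{s-}\,dS_s,
\end{align*}
and take expectations. Theorem \ref{theorem/1/existence}(\ref{theorem/1/existence/2}) already guarantees that $m(t)=\Em[X_t]$ is finite and c\`adl\`ag, and the membership $X\in\cS^2$ supplies the uniform $L^2$ control on compacts needed to interchange $\Em$ with the $ds$-integral by Fubini. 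Decomposing $S_s=\kappa_2(s+r)+\td S_s$ with $\td S$ a square integrable martingale, the integrand $X_{s-}$ is predictable and lies in $L^2(\td S)$, so by Lemma \ref{lemma/1/preliminaries/Jacod} the process $\int_{0+}^\cdot X_{s-}\,d\td S_s$ is a zero-mean martingale and $\Em\big[\int_{0+}^t X_{s-}\,dS_s\big]=\kappa_2\int_0^t m(s)\,ds$.

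The essential computation is $\Em[\xi(X)_s]$. Using \eqref{equation/1/semimartingale/xi}, the absolutely continuous term contributes $\int_{-p}^0 f_\mu(u)\,m(s+u)\,du$ by Fubini. For the stochastic term I would substitute $v=s+u$ to rewrite it as $\int_{(s-q,s]} f_\nu(v-s)\,X_{v-}\,dS_v$, split $dS_v=\kappa_2\,dv+d\td S_v$, and again invoke Lemma \ref{lemma/1/preliminaries/Jacod} to discard the martingale part, leaving $\kappa_2\int_{-q}^0 f_\nu(u)\,m(s+u)\,du$. Since $f=f_\mu+\kappa_2 f_\nu$ and $c_0=c_\mu-\kappa_2 c_\nu$, assembling the pieces gives
\begin{align*}
	m(t)=m(0)+\int_0^t\Big(\eta-c_0\,m(s)+\int_{-r}^0 f(u)\,m(s+u)\,du\Big)\,ds.
\end{align*}
This representation shows $m$ is locally absolutely continuous, hence continuous, on $(0,\infty)$; continuity and local boundedness of $m$ together with $f\in L^1$ then make the integrand continuous (by dominated convergence), so the fundamental theorem of calculus yields $m\in C^1((0,\infty))$ and the FDE \eqref{equation/1/moments/meanFDE}.

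For part (b) I would integrate \eqref{equation/1/moments/meanFDE} over $[0,t]$ and reorganise the double integral $\int_0^t\!\int_{-r}^0 f(u)\,m(s+u)\,du\,ds$ by splitting it according to whether $s+u\ge 0$ (the ``solution'' part) or $s+u<0$ (the ``initial-segment'' part). Writing $G(x):=\int_0^{x\minn r} f(-\sigma)\,d\sigma$ so that $\zeta(x)=-c_0+G(x)$ on $(0,\infty)$, the substitution $w=s+u$ and an interchange of the order of integration turn the solution part into $\int_0^t G(t-w)\,m(w)\,dw$, which combines with $-c_0\int_0^t m(s)\,ds$ to give exactly the convolution $\int_0^t \zeta(t-u)\,m(u)\,du$. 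The same substitution turns the initial-segment part into $\int_0^r m_{(0)}(-u)\,\big(\zeta(t+u)-\zeta(u)\big)\,du$, so that
\begin{align*}
	m(t)=\int_0^t \zeta(t-u)\,m(u)\,du + \Big(m_{(0)}(0)+\eta t+\int_0^r m_{(0)}(-u)\big(\zeta(t+u)-\zeta(u)\big)\,du\Big).
\end{align*}
To match the stated forcing function I would use that $\zeta$ is constant equal to $\zeta(r)=-c_0+\norm{f}_{L^1}$ on $[r,\infty)$; hence $\int_0^t\zeta(u)\,du+\int_0^r\big(\zeta(t+u)-\zeta(u)\big)\,du=\int_r^{t+r}\zeta(v)\,dv=\zeta(r)\,t$, which lets me replace $\eta t=\frac{\eta}{\zeta(r)}\big[\int_0^t\zeta(u)\,du+\int_0^r(\zeta(t+u)-\zeta(u))\,du\big]$ and regroup the terms into the claimed $h$.

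The genuine difficulty lies in part (a): rigorously interchanging expectation with the $ds$-integral of $\xi(X)$, whose definition itself contains a stochastic integral against $S$. This is a stochastic-Fubini issue, and its justification rests on the $L^2$ bounds coming from $X\in\cS^2$ together with Lemma \ref{lemma/1/preliminaries/Jacod}, which simultaneously supply integrability and the vanishing of every martingale contribution in the mean. Part (b) is then essentially a change-of-variables bookkeeping exercise, the only substantive observation being that the kernel $\zeta$ is eventually constant.
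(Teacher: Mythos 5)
Your proposal is correct. Part (a) is essentially the paper's own proof: take expectations in \eqref{equation/1/semimartingale/XSDDE}, use $X\in\cS^2$ and Lemma \ref{lemma/1/preliminaries/Jacod} to kill the martingale contributions (both from $c_\nu X_{-}\cdot\td S$ and from the stochastic part of $\xi(X)$), apply Fubini to reach the integral equation for $m$, and then upgrade to continuity and $C^1$ so the fundamental theorem of calculus yields \eqref{equation/1/moments/meanFDE}. One small caveat in your continuity step: on $[-r,0]$ the function $m=\phi$ is only c\`adl\`ag, so the pointwise convergence $m(s_n+u)\to m(s+u)$ you invoke under dominated convergence can fail at the (at most countably many) discontinuities of $\phi$; this is a Lebesgue-null set of $u$'s, so your argument survives, but the paper sidesteps the issue entirely by shifting the continuous, compactly supported function $f$ rather than $m$, estimating $\int|f(u-t_2)-f(u-t_1)|\,|m(u)|\,du$.

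Part (b) is where you genuinely diverge, and your route is sound. The paper follows Hale--Lunel: it centers the mean, $\td m:=m-M$ with $M=\eta/(c_0-\norm{f}_{L^1})$, separates the initial segment in the resulting homogeneous delay equation, integrates by parts to obtain first a renewal equation for the \emph{derivative} $\td m'$, then integrates up and interchanges the order of integration to get the renewal equation for $\td m$, and finally undoes the centering via $\td m=m+\eta/\zeta(r)$. You instead integrate \eqref{equation/1/moments/meanFDE} directly and split the double integral $\int_0^t\int_{-r}^0 f(u)m(s+u)\,du\,ds$ at $s+u\ge 0$: Fubini and the substitution $w=s+u$ produce the convolution $\int_0^t\zeta(t-u)m(u)\,du$ and the initial-segment term in one pass, and the identity $\int_0^t\zeta(u)\,du+\int_0^r\bigl(\zeta(t+u)-\zeta(u)\bigr)\,du=\int_r^{t+r}\zeta(v)\,dv=\zeta(r)\,t$ (eventual constancy of $\zeta$) converts $\eta t$ into the stated forcing function $h$; I checked these computations and they reproduce the theorem exactly. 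Your approach is more elementary --- no centering, no intermediate equation for $\td m'$ --- and it makes transparent that the renewal structure itself requires no condition on $\zeta(r)$: the division by $\zeta(r)$ enters only when matching the paper's particular parameterization of $h$ (both proofs implicitly need $\zeta(r)=\norm{f}_{L^1}-c_0\ne 0$ for that). What the paper's detour buys is the centered homogeneous renewal equation for $\td m$, which it reuses directly in the proof of Theorem \ref{theorem/1/moments/stationarymean}; with your derivation that later proof would need the centering introduced there instead, an easy but nonzero amount of extra work.
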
	
	
	From Theorem \ref{theorem/1/moments/mean}, we can formulate necessary and sufficient conditions for the process $X$ to be mean stationary, i.e., for $m(t) = M$ for all $t\in [0,\infty)$ for some $M\ge 0$, or for $X$ to be asymptotically mean stationary, i.e., for $m(t)\to M$ when $t\to \infty$. 
		% The positivity constraint on $M$ guarantees the uniqueness of the stationary/limiting mean. 

	\begin{theorem}\label{theorem/1/moments/stationarymean}
		Assume Theorem \ref{theorem/1/moments/mean} holds so $m(t)$ satisfies the functional differential equation \eqref{equation/1/moments/meanFDE} with some positive initial condition $\phi\in\scD$. Then 
		\begin{enumerate}[(a)]
			\item \label{theorem/1/moments/stationarymean/limitingmean} 
			The mean function $m$ converges to a (positive) limit $M$ exponentially fast as $t\to \infty$, if and only if $c_0>\norm{f}_{1}$. If it exists, the limit $M$ is uniquely given by
			\begin{align}
				M  =\frac{\eta}{c_0- \norm{f}_{1}}.
				\label{equation/1/moments/Mx}
			\end{align}

			\item \label{theorem/1/moments/stationarymean/stationarymean}
			The process $X$ admits a stationary (positive) mean, i.e. $m(t)=M$ for all $t\in[0,\infty)$, if and only if $c_0>\norm{f}_{L^1}$ and $\phi\equiv M$ on $[-r,0]$, where $M$ is given by \eqref{equation/1/moments/Mx}.\qed
		\end{enumerate}

	\end{theorem}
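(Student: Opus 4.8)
The plan is to work entirely at the level of the deterministic functional differential equation \eqref{equation/1/moments/meanFDE} established in Theorem \ref{theorem/1/moments/mean}, reading off both stationarity and its asymptotic counterpart from the location of the roots of the associated characteristic function. The first step is to identify $M$ in \eqref{equation/1/moments/Mx} as the unique constant solution of \eqref{equation/1/moments/meanFDE}: imposing $m\equiv M$ (so $\tfrac{d}{dt}m\equiv 0$) and using $\int_{-r}^0 f(u)\,du=\norm{f}_1$ forces $\eta - M(c_0-\norm{f}_1)=0$, which has the unique root $M=\eta/(c_0-\norm{f}_1)$ whenever $c_0\ne\norm{f}_1$, and this $M$ is positive precisely when $c_0>\norm{f}_1$. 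Substituting $y:=m-M$ then removes the inhomogeneity and yields the homogeneous delay equation
\begin{align*}
\frac{d}{dt}y(t) = -c_0\,y(t) + \int_{-r}^0 y(t+u)\,f(u)\,du = \int_{[-r,0]} y(t+u)\,\mu_0(du),
\end{align*}
with $\mu_0:=-c_0\delta_0 + f(u)\,du$, which is exactly of the form \eqref{equation/1/preliminaries/dde/dde}.

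For the sufficiency direction of part (a) I would compute the characteristic function \eqref{equation/1/preliminaries/dde/characteristicfunction} of $\mu_0$, namely $\Delta(z)=z+c_0-\int_{-r}^0 e^{zu}f(u)\,du$, and show it has no zero in the closed right half-plane when $c_0>\norm{f}_1$. Indeed, if $\Delta(z)=0$ with $\Re z\ge 0$ then $z+c_0=\int_{-r}^0 e^{zu}f(u)\,du$; since $u\le 0$ and $\Re z\ge 0$ give $|e^{zu}|=e^{u\Re z}\le 1$ and $f\ge 0$, the triangle inequality bounds the right side in modulus by $\int_{-r}^0 f=\norm{f}_1$, whereas the left side has modulus at least $\Re z + c_0\ge c_0>\norm{f}_1$, a contradiction. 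Because a retarded characteristic function has only finitely many roots in each right half-plane and none here has nonnegative real part, the real parts of the roots are bounded above by some $-\delta<0$; choosing $\lambda\in(-\delta,0)$ off the spectrum, the leading sum in the asymptotic expansion \eqref{equation/1/preliminaries/dde/asyExpansion} is empty and $y(t)=o(e^{\lambda t})$, so $m(t)\to M>0$ exponentially fast.

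The necessity direction of part (a) is elementary and needs no root analysis. If $m(t)\to M'$ for a positive $M'$, then $m(t+u)\to M'$ for each fixed $u$, and dominated convergence (using $f\in L^1$) gives $\int_{-r}^0 m(t+u)f(u)\,du\to M'\norm{f}_1$; hence the right side of \eqref{equation/1/moments/meanFDE} shows $m'(t)$ converges. A convergent $C^1$ function whose derivative converges must have derivative limit $0$, so $\eta - M'(c_0-\norm{f}_1)=0$, which forces $c_0-\norm{f}_1=\eta/M'>0$ and $M'=M$. This simultaneously pins down the value of $M$ and shows $c_0>\norm{f}_1$ is necessary (and that mere convergence already upgrades to the exponential rate via part~(a)).

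For part (b), sufficiency is immediate: if $c_0>\norm{f}_1$ and $\phi\equiv M$, the constant function $M$ solves \eqref{equation/1/moments/meanFDE} and matches the initial segment, so by uniqueness of solutions (inherited from the uniqueness for \eqref{equation/1/preliminaries/dde/dde} via $y=m-M$) we get $m\equiv M>0$. For necessity, assuming $m\equiv M$ on $[0,\infty)$, evaluating \eqref{equation/1/moments/meanFDE} at any $t>r$ (where the whole delay window lies in $[0,\infty)$) again yields $c_0>\norm{f}_1$ and $M=\eta/(c_0-\norm{f}_1)$, while $\phi(0)=m(0)=M$ by right-continuity. The remaining and most delicate point is to recover $\phi\equiv M$ on all of $[-r,0)$: inserting $m\equiv M$ into \eqref{equation/1/moments/meanFDE} for $t\in(0,r)$ and using $\eta-c_0M=-M\norm{f}_1$ collapses the equation to
\begin{align*}
\int_{-r}^{-t}\big(\phi(t+u)-M\big)f(u)\,du = 0, \qquad t\in(0,r),
\end{align*}
a one-sided convolution identity for the deviation $\phi-M$ against $f$. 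I expect this to be the main obstacle: concluding $\phi\equiv M$ requires exploiting the continuity of $\phi$ and $f$ together with the non-degeneracy of $f$ on $[-r,0]$ (for instance by differentiating the identity in $t$, transferring the vanishing of the convolution to $\phi-M$ pointwise), and one must be careful that the argument only constrains $\phi$ on the part of $[-r,0]$ seen by the support of $f$. Once $\phi-M$ is shown to vanish a.e., c\`adl\`ag regularity upgrades this to $\phi\equiv M$ everywhere, completing the proof.
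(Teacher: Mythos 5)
Your treatment of part (a) and of the sufficiency half of part (b) is correct, and in both places you take a route genuinely different from, and arguably cleaner than, the paper's. For necessity in (a), the paper centers the equation at $M=\eta/(c_0-\norm{f}_{L^1})$ (which already presupposes $c_0\ne\norm{f}_{L^1}$) and argues from the asymptotic expansion \eqref{equation/1/preliminaries/dde/asyExpansion} that $0$ is the only possible limit of $\td m$; your argument --- dominated convergence applied to the delay term plus the elementary fact that a convergent $C^1$ function whose derivative converges must have derivative limit $0$ --- avoids the spectral machinery and proves something stronger, namely that mere convergence of $m$ to a positive limit already forces $c_0>\norm{f}_{L^1}$ and pins down the limit as \eqref{equation/1/moments/Mx}. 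For sufficiency in (a) the two proofs coincide in substance (root-freeness of $\Delta$ in the closed right half-plane; the paper bounds the real part, you bound the modulus). For sufficiency in (b), the paper passes through the renewal equation of Theorem \ref{theorem/1/moments/mean}(\ref{theorem/1/moments/meanRE}) and a representation theorem of Diekmann et al., whereas you simply observe that the constant $M$ solves \eqref{equation/1/moments/meanFDE} with matching initial segment and invoke forward uniqueness; shorter and equally valid.

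The gap is in the necessity half of (b), and your hesitation there is exactly right: the convolution identity $\int_{-r}^{-t}(\phi(t+u)-M)f(u)\,du=0$ for $t\in(0,r)$ is all that the hypothesis $m\equiv M$ on $[0,\infty)$ gives, and it does \emph{not} imply $\phi\equiv M$ under the paper's stated assumptions. Concretely, suppose $f$ vanishes identically on $[-r,-r/2]$ (permitted, since the paper only requires $f_\mu,f_\nu$ to be nonnegative, continuous, and vanishing outside $[-p,0]$ and $[-q,0]$), let $\psi$ be any nonzero continuous function supported in $(-r,-r/2)$ with $|\psi|<M$, and set $\phi=M+\psi$. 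Whenever $f(u)\ne0$ we have $u>-r/2$, so for $t>0$ the point $t+u$ lies in $(-r/2,\infty)$, where the function equal to $\phi$ on $[-r,0)$ and to $M$ on $[0,\infty)$ is identically $M$; hence that function satisfies \eqref{equation/1/moments/meanFDE}, and by forward uniqueness it \emph{is} the mean function. Thus $m\equiv M$ on $[0,\infty)$ while $\phi\not\equiv M$: necessity fails. To rescue the claim one needs a nondegeneracy hypothesis such as $-r\in\mathrm{supp}\,f$, under which your plan does close (Titchmarsh's convolution theorem applied to the identity forces $\phi-M\equiv0$). Note also that the paper's own proof of this step is a non sequitur: it deduces ``the solution $m\equiv M$ corresponds uniquely to the initial condition $\phi\equiv M$'' from the fact that each initial condition determines a unique solution, i.e.\ it infers injectivity of the map $\phi\mapsto m|_{[0,\infty)}$ (backward uniqueness) from forward uniqueness --- and backward uniqueness is precisely what the example above destroys. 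So the step you left open is not an oversight peculiar to your proposal; it is a genuine defect of the theorem as stated, which your cautious phrasing correctly anticipated.
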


	\begin{remark}
		Here, $c_\mu$ is negative and provides a mean reversion effect, while $c_\nu$ is positive representing a positive jump in $X$ whenever there is a jump in $S$ (c.f. Proposition \ref{proposition/1/semimartingale/pathofX}(\ref{proposition/1/semimartingale/pathofX/jumps})). The functions $f_\mu$ and $f_\nu$ are positive functions representing the delayed effects of past values of $X$ and $S$.
		The key condition for first order stability, $c_0>\norm{f}_{1}$, or
		\begin{align*}
			\numberthis\label{equation/1/moments/stationarymean/condition}
			c_\mu > \kappa_2 c_\nu + \int_{-r}^0 (f_\mu + \kappa_2 f_\nu )(u) du,
		\end{align*} 
		then has a very natural interpretation: the speed of mean reversion has to be large enough in comparison to the delay effects. \qed
	\end{remark}

	In general, the second moment of the process $X$ involves the term
	$\Em[X_t\int_{t-q+}^t X_s f_\nu(s) dS_s]$, which is not easily evaluated. 
	However, we can still formulate some asymptotic results.
	\begin{theorem}\label{theorem/1/moments/weakdependence}
	Suppose condition \eqref{equation/1/moments/stationarymean/condition} is satisfied so that $\Em[X_u] \to M$ as $u\to \infty$. For every $t>0$ and $\cF_t$-measurable random variable $Z$ with $\Em[Z^2 X_u^2]<\infty$, we have $\Em[Z X_u]\to M\Em[Z]$
		exponentially fast as $u\to \infty$. 
	\qed
	\end{theorem}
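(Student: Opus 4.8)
The plan is to show that the function $g(u):=\Em[Z X_u]$ solves, for all sufficiently large $u$, the \emph{same} deterministic functional differential equation as the mean function $m$ in Theorem~\ref{theorem/1/moments/mean}, but with the constant forcing $\eta$ replaced by $\eta\,\Em[Z]$. The asymptotic stability already established in Theorem~\ref{theorem/1/moments/stationarymean} under the hypothesis \eqref{equation/1/moments/stationarymean/condition} then forces $g(u)\to M\,\Em[Z]$ exponentially fast, which is the assertion.

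First I would record the integrability used throughout. By Theorem~\ref{theorem/1/existence} the map $s\mapsto\Em[X_s^2]$ is finite-valued and c\`adl\`ag, hence bounded on compacts; and since $X$ is positive and bounded away from $0$ by $x^-$ (Theorem~\ref{theorem/1/solution/positivity}), the hypothesis $\Em[Z^2X_u^2]<\infty$ gives $\Em[Z^2]\le (x^-)^{-2}\Em[Z^2X_u^2]<\infty$. Cauchy--Schwarz then shows $g$ is finite-valued and controls each stochastic integral below; in particular all Fubini interchanges of $\Em$ with the time integrals are justified.

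Next I would integrate the SFDE \eqref{equation/1/semimartingale/XSDDE} from $t$ to $u>t$, multiply by $Z$, and take expectations. Writing $\td S_s=S_s-\kappa_2(s+r)$ for the martingale part of $S$, the key observation is that any It\^o integral against $\mathrm{d}\td S$ over a time window strictly to the right of $t$ has conditional expectation zero given $\cF_t$: for a predictable $H$ and $s-q>t$ one has $\Em\!\big[\int_{s-q}^{s}H_w\,\mathrm{d}\td S_w\mid\cF_t\big]=0$ by the tower property ($\cF_t\subseteq\cF_{s-q}$), so $\cF_t$-measurability of $Z$ makes the corresponding expectation vanish. Applying this to the martingale part of the $c_\nu\int X_{-}\,\dS$ term and, after the substitution $w=s+v$, to the $f_\nu$-part of $\xi(X)_s$ (cf.~\eqref{equation/1/semimartingale/xi}), only the compensator $\kappa_2\,\ds$ survives. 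Collecting terms, using $\Em[ZX_{s-}]=\Em[ZX_s]$ for a.e.\ $s$, and the identities $-c_\mu+\kappa_2 c_\nu=-c_0$ and $f=f_\mu+\kappa_2 f_\nu$, I obtain for all $u>t+r$
\begin{align*}
	\frac{d}{du} g(u) = \eta\,\Em[Z] - c_0\, g(u) + \int_{-r}^0 f(v)\, g(u+v)\, \dv .
\end{align*}
The windows $[s-q,s]$ that straddle $t$ occur only for $s\in[t,t+r]$ and merely fix a finite, continuous history segment for this equation.

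Finally I would subtract the constant $M\,\Em[Z]$, which one checks is the unique equilibrium: $\int_{-r}^0 f(v)M\,\Em[Z]\,\dv = M\,\Em[Z]\norm{f}_1$ since $f\ge0$, so $\eta\,\Em[Z]-c_0 M\,\Em[Z]+M\,\Em[Z]\norm{f}_1=\Em[Z](\eta-\eta)=0$ by $M=\eta/(c_0-\norm{f}_1)$. The difference $D(u):=g(u)-M\,\Em[Z]$ then solves the homogeneous equation $D'(u)=-c_0 D(u)+\int_{-r}^0 f(v)D(u+v)\,\dv$ on $(t+r,\infty)$, whose characteristic function is root-free in the closed right half-plane exactly when $c_0>\norm{f}_1$ (the estimate $\Re\Delta(z)\ge c_0-\norm{f}_1>0$ already used in Theorem~\ref{theorem/1/moments/stationarymean}). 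The expansion \eqref{equation/1/preliminaries/dde/asyExpansion} then yields $D(u)=o(e^{\lambda u})$ for some $\lambda<0$, i.e.\ $g(u)\to M\,\Em[Z]$ exponentially fast. I expect the main obstacle to be the careful bookkeeping of the stochastic-integral terms over the windows straddling $t$, together with the integrability and Fubini justifications; these are precisely what make the clean functional differential equation emerge only for $u>t+r$, and one must argue that this finite transient does not affect the exponential rate.
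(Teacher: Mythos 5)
Your proposal is correct and follows essentially the same route as the paper: the paper likewise derives, for $u>t+r$, the functional differential equation $\frac{d}{du}\Em[ZX_{t+u}]=\eta\,\Em[Z]-c_0\,\Em[ZX_{t+u}]+\int_{-r}^0 f(v)\,\Em[ZX_{t+u+v}]\,\dv$ by using the $\cF_t$-measurability of $Z$ (tower property) to annihilate the $\dd\td S$ martingale integrals, and then invokes the same characteristic-root stability argument as in Theorem \ref{theorem/1/moments/stationarymean} to conclude exponential convergence to $M\Em[Z]$. Your added care with integrability (bounding $\Em[Z^2]$ via the lower bound $x^-$ and Cauchy--Schwarz) only makes explicit what the paper leaves implicit.
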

	
	The asymptotic behavior of the covariance function $\Cov(X_t, X_{t+u})$ of the process $X$ is an immediate corollary to Theorem \ref{theorem/1/moments/weakdependence} by taking $Z = X_t$. 
	\begin{corollary}\label{corollary/1/moments/weakdependence}
		Suppose $X$ is asymptotically mean stationary and has finite fourth moments. Then for every $t>0$, the covariance function $\Cov(X_t, X_{t+u})$ tends to zero exponentially fast as $u\to \infty$. 
		Thus, the process $X$ possesses ``short memory''. \qed
		% If it happens that $X$ is strictly stationary, then $X$ is known as a  \textit{weakly dependent} process.
	\end{corollary}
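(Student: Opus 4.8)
The plan is to obtain the statement as a direct application of Theorem \ref{theorem/1/moments/weakdependence} with the particular choice $Z = X_t$, followed by an elementary splitting of the covariance into two terms that each decay exponentially. First I note that, by Theorem \ref{theorem/1/moments/stationarymean}(\ref{theorem/1/moments/stationarymean/limitingmean}), the assumed asymptotic mean stationarity is equivalent to the condition $c_0>\norm{f}_{1}$, i.e.\ to \eqref{equation/1/moments/stationarymean/condition}. This is precisely the standing hypothesis of Theorem \ref{theorem/1/moments/weakdependence}, and it simultaneously guarantees that $m(v)=\Em[X_v]\to M$ at an exponential rate.

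Next I would fix $t>0$ and set $Z:=X_t$, which is $\cF_t$-measurable. To invoke Theorem \ref{theorem/1/moments/weakdependence} I must verify its integrability hypothesis $\Em[Z^2 X_u^2]=\Em[X_t^2 X_u^2]<\infty$. Since $X$ is assumed to have finite fourth moments, the Cauchy--Schwarz inequality yields $\Em[X_t^2 X_u^2]\le (\Em[X_t^4])^{1/2}(\Em[X_u^4])^{1/2}<\infty$, so the hypothesis holds. The theorem then gives that $\Em[X_t X_u]\to M\,\Em[X_t]$ exponentially fast as $u\to\infty$.

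Finally I would write the covariance as $\Cov(X_t,X_{t+u})=\Em[X_t X_{t+u}]-\Em[X_t]\,\Em[X_{t+u}]$, substitute $v:=t+u$ (so that $v\to\infty$ as $u\to\infty$), and split
\[
\Cov(X_t,X_{t+u})=\big(\Em[X_t X_v]-M\,\Em[X_t]\big)-\Em[X_t]\big(\Em[X_v]-M\big).
\]
By the previous step the first bracket tends to $0$ exponentially fast. For the second, the exponential convergence $\Em[X_v]=m(v)\to M$ (from Theorem \ref{theorem/1/moments/stationarymean}(\ref{theorem/1/moments/stationarymean/limitingmean})), multiplied by the fixed finite constant $\Em[X_t]$, also decays exponentially. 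The sum of two exponentially decaying terms is exponentially decaying, which establishes the claim and justifies the ``short memory'' interpretation.

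There is no genuine obstacle here: the only point requiring care is the verification of $\Em[X_t^2 X_u^2]<\infty$ needed to apply Theorem \ref{theorem/1/moments/weakdependence}, which is exactly where the finite-fourth-moment assumption is used. Everything else is a routine rearrangement combined with the exponential rates already supplied by Theorems \ref{theorem/1/moments/weakdependence} and \ref{theorem/1/moments/stationarymean}.
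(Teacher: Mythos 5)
Your proposal is correct and follows exactly the paper's route: the paper proves this corollary by the one-line observation that it is immediate from Theorem \ref{theorem/1/moments/weakdependence} with $Z = X_t$, which is precisely your argument. Your verification of the hypothesis $\Em[X_t^2 X_u^2]<\infty$ via Cauchy--Schwarz and the explicit splitting of the covariance into the two exponentially decaying brackets simply fill in the routine details the paper leaves implicit.
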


We finally look at the properties of the price and return processes under the CDGARCH model.
Recall the price process $Y_t = Y_0 + \int_{0+}^t \rt{X_{s-}} dL_s, t\ge 0$, 
and define 
the return process $(\td Y_t)_{t> 1}$ by $\td Y_t:= Y_{t} - Y_{t-1} = \int_{t-1+}^t \rt{X_{s-}}dL_s$. 
\begin{theorem}
Let $(X_t)_{t}$ be the solution to \eqref{equation/1/semimartingale/XSDDE} and $Y, \td Y$ be defined as above. 
Suppose $X$ is mean stationary, with mean $M$ defined in \eqref{equation/1/moments/Mx}.
	\label{theorem/1/moments/price}
	\begin{enumerate}[(a)]

		\item \label{theorem/1/moments/price/statioanrycovariance}
		The return process $\td Y$ is covariance stationary, with zero mean and auto-covariance function given by
		$\Cov(\td Y_t, \td Y_{t+u}) = \Em\left[\td Y_t \td Y_{t+u}\right] = \kappa_2 M (1-u)_+$.

		\item \label{theorem/1/moments/price/squaredreturns} Suppose $\td Y$ has finite fourth moments. Then for any $t>1$, the squared return process $(\td Y_t^2)_{t> 1}$ satisfies $\Cov(\td Y_t^2, \td Y_{t+u}^2)\to 0$ exponentially fast as $u\to \infty$, i.e., $Z^2$ has short memory.	\qed

		% \item \label{theorem/1/moments/price/stationaryincrements}
		% If $X$ is strictly stationary, then $Y$ is a process of stationary increments.

	\end{enumerate}
\end{theorem}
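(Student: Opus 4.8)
My starting point is the defining relation $\td Y_t = \int_{t-1+}^t \sqrt{X_{s-}}\,dL_s$, together with the fact (from Section~\ref{section/1/preliminaries/levyprocess}) that $L$ is a centered, locally square integrable L\'evy martingale with $[L,L]=S$ and $\langle L,L\rangle_t = \kappa_2 t$ (after absorbing the Brownian part as in Remark~\ref{remark/1/solution/nobrownian}). For part~(\ref{theorem/1/moments/price/statioanrycovariance}), the zero mean follows immediately from the martingale property of the stochastic integral, since $\sqrt{X_{s-}}$ is predictable and in $L^2(L)$ by the uniform $L^1$ bound of Theorem~\ref{theorem/1/moments/moment bound}. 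For the covariance, I would treat the cases $u\ge 1$ and $0\le u<1$ separately. When $u\ge 1$ the integration windows $[t-1,t]$ and $[t+u-1,t+u]$ are disjoint, and using the tower property together with the independent-increments structure of $L$ (conditioning on $\cF_{t+u-1}$, under which $\td Y_t$ is measurable and $\td Y_{t+u}$ is a martingale increment with conditional mean zero) gives $\Cov(\td Y_t,\td Y_{t+u})=0$, matching $(1-u)_+=0$. When $0\le u<1$ the windows overlap on $[t+u-1,t]$; here I would apply the It\^o isometry for the stochastic integral against the martingale $L$, so that only the diagonal (overlap) contributes:
\begin{align*}
\Em[\td Y_t\td Y_{t+u}]
= \Em\!\left[\int_{t+u-1}^{t} X_{s}\,d\langle L\rangle_s\right]
= \kappa_2\int_{t+u-1}^{t}\Em[X_s]\,ds = \kappa_2 M(1-u),
\end{align*}
using mean stationarity $\Em[X_s]=M$ and $d\langle L\rangle_s=\kappa_2\,ds$. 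Combining the two cases yields $\kappa_2 M(1-u)_+$, and since this depends only on $u$, covariance stationarity follows.

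For part~(\ref{theorem/1/moments/price/squaredreturns}) the object is $\Cov(\td Y_t^2,\td Y_{t+u}^2)$ for large $u$, and the strategy is to reduce the asymptotic decorrelation of the \emph{squared} returns to the weak-dependence result already proved for the variance process. Fix $t$ and let $u$ be large enough that the window $[t+u-1,t+u]$ lies far to the right of $[t-1,t]$. I would condition on $\cF_{t+u-1}$: the inner return $\td Y_t^2$ is $\cF_{t+u-1}$-measurable, and I would expand $\Em[\td Y_{t+u}^2\mid\cF_{t+u-1}]$ using the It\^o isometry conditionally, which produces $\kappa_2\int_{t+u-1}^{t+u}\Em[X_s\mid\cF_{t+u-1}]\,ds$ plus a contribution from the quadratic-variation/jump part of $L$. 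The key point is that each such conditional expectation has the form $\Em[X_s\mid\cF_{t+u-1}]$ with $s\ge t+u-1$, so by Theorem~\ref{theorem/1/moments/weakdependence} (applied with the $\cF_{t+u-1}$-measurable weight $Z=\td Y_t^2$, whose required moment condition $\Em[Z^2 X_s^2]<\infty$ is guaranteed by the assumed finite fourth moments together with the uniform $L^2$ bound) these conditional expectations converge, after taking the outer expectation against $\td Y_t^2$, to $M\,\Em[\td Y_t^2]$ exponentially fast. This forces $\Em[\td Y_t^2\td Y_{t+u}^2]\to \Em[\td Y_t^2]\cdot\kappa_2 M$, which is precisely $\Em[\td Y_t^2]\Em[\td Y_{t+u}^2]$ in the limit by part~(\ref{theorem/1/moments/price/statioanrycovariance}), giving exponential decay of the covariance.

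The main obstacle, and the step requiring the most care, is the second part: the square $\td Y_{t+u}^2$ does not reduce to a single stochastic integral, so I must carefully decompose $\Em[\td Y_{t+u}^2\mid\cF_{t+u-1}]$ into its predictable-variation piece and its martingale remainder, and verify that every term is a linear functional of conditional expectations $\Em[X_s\mid\cF_{t+u-1}]$ (or $\Em[X_{s-}\mid\cF_{t+u-1}]$) to which Theorem~\ref{theorem/1/moments/weakdependence} applies. In particular I must check the moment hypothesis of that theorem holds uniformly in $s$ over the relevant window, which is where the finite-fourth-moment assumption and the uniform $L^2$ bound of Theorem~\ref{theorem/1/moments/moment bound}(\ref{theorem/1/moments/moment bound/L2uniform}) enter, and I must confirm that the exponential rate in Theorem~\ref{theorem/1/moments/weakdependence} survives integration over the unit-length window and the outer expectation. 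The first part, by contrast, is essentially a clean isometry-plus-independence computation and should present no difficulty.
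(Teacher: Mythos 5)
Your proposal is correct and follows essentially the same route as the paper: part (a) is the same It\^o-isometry/overlap computation, and part (b) reduces $\Em[\td Y_t^2\,\td Y_{t+u}^2]$ to $\kappa_2\int_{t+u-1}^{t+u}\Em[\td Y_t^2 X_s]\,ds$ (your conditional-isometry phrasing is equivalent to the paper's explicit It\^o decomposition of $\td Y_{t+u}^2$ with the martingale terms killed by $\cF_{t+u-1}$-measurability) and then invokes Theorem~\ref{theorem/1/moments/weakdependence} with $Z=\td Y_t^2$, exactly as the paper does.
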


 		% \input{results/results_stationarity}

	% \input{numerics/numerics}

		%!TEX root = ../main.tex
\section{Proofs}\label{section/1/proofs}
\subsection{Existence and uniqueness of the solution}\label{section/proofs/1/existence}

Given a signed measure $\mu$ on a measure space $(S, \Sigma)$, we denote its corresponding total variation measure $|\mu|$ by $|\mu|(E)= \sup_{\pi} \sum_{A\in \pi} |\mu(A)|$, for all $E\in \Sigma$, where the supremum is taken over all $\Sigma$-measurable partitions $\pi$ of $E$. We also denote the total variation norm of $\mu$ as $\norm{\mu}:=|\mu|(S)$.

Let $\mu$ and $\nu$ be signed Borel measures on $[-r,0]$ with finite total variations and $S$ be a c\`adl\`ag, adapted process with paths of finite variation. Recall $\theta$ from \eqref{equation/1/model/theta} and $\norm{\cdot}_t$ from \eqref{equation/1/existence/normt}. We can write the variance equation \eqref{equation/1/model/CDGARCH_X} as $X_t= \theta_t + \cR (X)_t$, where $\cR$ is a linear map on $\scD$ given by 
	\begin{align}
	 	\cR(Z)_t= 
	 		\int_{-r}^0  \int_{u}^{t+u} Z_s ds  \mu (du) 
			+ \int_{-r}^0  \int_{u+}^{t+u} Z_{s-} dS_s \nu (du),\quad t>0,
		\label{equation/1/existence/proof/existence/cR}
	\end{align}
and $\cR(Z)_u=0$ for all $u\le 0$. Using Lemma \ref{lemma/1/preliminaries/Jacod}, it is easy to see that $\cR(Z)$ is c\`adl\`ag and adapted, whenever $Z$ is c\`adl\`ag and adapted. 
We first obtain some norm estimates on $\cR$:
	%%%%%%%%%%%%%%%%%%%%%%%% NORM LEMMA %%%%%%%%%%%%
	\begin{lemma}\label{lemma/1/existence/norm} 
		Let $(H_t)_{t\ge -r}$ be a process in $\cS^2$, as defined in \eqref{equation/1/existence/normt}.
		Then for all  $T\ge -r$,
		\begin{align*}
			\norm{\cR(H)}_{T}^2
				\le K_T \int_{-r}^T \Em\left[H_{s}^2 \right]ds\le K'_T \norm{H}_{T}^2,
		\end{align*}
		where $K_T=2(\norm{\mu}^2+ 2\kappa_2 \norm{\nu}^2)T + 16 \kappa_4 \norm{\nu}^2<\infty$ and
		$K'_T=K_T(T+{r})<\infty$.

		\begin{proof}
			For notational convenience, we will define the semi-norm
			\begin{align*}
				\norm{Z}_{[0,t]}:
				=\left( \Em\left[\sup_{s\in[0,t]} |Z_s|^2\right]\right)^{1/2}.
			\end{align*}
			Since $\cR(H)_u=0$ for $u\le 0$, by the inequality $(a+b)^2\le 2a^2 + 2b^2$, 
			we have 
			% \begin{align*}
			% 	\norm{\cR (H)}_{T}^2
			% 	% &=\Em \left[\sup_{t\in[0,T)} \left| 
			% 	% 	\int_{-r}^0  \int_{u}^{t+u}  H_s  ds  \mu (du) 
			% 	% 	+ \int_{-r}^0  \int_{u+}^{t+u}  H_{s-}  dS_s \nu (du) \right|^2\right]
			% 	% 	\\
			% 	&\le 2\Em \left[\sup_{t\in[0,T)} \left| 
			% 			\int_{-r}^0  \int_{u}^{t+u}  H_s  ds  \mu (du) \right|^2\right]
			% 		+2 \Em \left[\sup_{t\in[0,T)} \left| 
			% 			\int_{-r}^0  \int_{u+}^{t+u}  H_{s-}  dS_s \nu (du) \right|^2\right]
			% 	 =: \mathrm{\bf I} + \mathrm{\bf II}.
			% \end{align*}
			\begin{align*}
				\norm{\cR (H)}_{T}^2
				% &=\Em \left[\sup_{t\in[0,T)} \left| 
				% 	\int_{-r}^0  \int_{u}^{t+u}  H_s  ds  \mu (du) 
				% 	+ \int_{-r}^0  \int_{u+}^{t+u}  H_{s-}  dS_s \nu (du) \right|^2\right]
				% 	\\
				&\le  2\normm{ 
						\int_{-r}^0  \int_{u}^{u+\bigcdot}  H_s  ds  \mu (du) }_{[0,T]}^2
					+ 2\normm{ 
						\int_{-r}^0  \int_{u+}^{u+\bigcdot}  H_{s-}  dS_s \nu (du)}_{[0,T]}^2
				 =: \mathrm{\bf I} + \mathrm{\bf II}.
			\end{align*}
			Since $H=0$ on $[-r,0]$, an application of the Cauchy-Schwarz inequality yields the bound
			\begin{align*}
				\mathrm{\textbf I}
				% &\le \Em\left[ \sup_{t\in[0,T)} \left| \int_{-r}^0  
				% 	\left|\int_{u}^{t+u} H_s ds \right| |\mu| (du) \right|^2 \right]
					% \\
				% &\le 
				% 	2\Em \left[\sup_{t\in[0,T)} 
				% 	\left|\norm{\mu}\cdot\sup_{u\in[-r,0]} \left(\int_{u}^{t+u} H_s ds\right)  \right|^2 \right]  
				% 	\\
				&\hspace{0cm}\le 2\norm{\mu}^2
					\Em \left[ \sup_{t\in[0,T]}\sup_{u\in[-r,0]}  
					\left(t \int_{u}^{t+u}\left| H_s\right|^2 ds\right)  \right]
				\le 2T\norm{\mu}^2  \Em \left[ \int_{0}^T |H_s|^2 ds \right].
			\end{align*}
			For $\mathrm{\bf II}$, recall $\td S_t:= S_t- (t+r)\kappa_2$ from \eqref{equation/1/preliminaries/drivingnoise/tdS}. 
			Using the same reasoning as above,  
			% \begin{align*}
			% 	% &2\Em \left[\sup_{t\in[0,T)} 
			% 	% 	\left| \int_{-r}^0 \int_{u+}^{t+u}  H_{s-}  dS_s \nu (du) \right|^2\right]
			% 	% 	\\
			% 	\mathrm{\bf II}&\le 4\kappa_2^2\Em \left[\sup_{t\in[0,T)} 
			% 		\left| \int_{-r}^0 \int_{u}^{t+u}  H_{s-}  ds \nu (du) \right|^2\right]
			% 	+4\Em \left[\sup_{t\in[0,T)} 
			% 		\left| \int_{-r}^0 \int_{u+}^{t+u}  H_{s-}  d\td S_s \nu (du) \right|^2\right]
			% 		=:\mathrm{\bf III}+ \mathrm{\bf IV}
			% \end{align*}
			\begin{align*}
				% &2\Em \left[\sup_{t\in[0,T)} 
				% 	\left| \int_{-r}^0 \int_{u+}^{t+u}  H_{s-}  dS_s \nu (du) \right|^2\right]
				% 	\\
				\mathrm{\bf II}&\le 4\kappa_2^2 \normm{
					\int_{-r}^0 \int_{u}^{u+\bigcdot}  H_{s}  ds \nu (du) }_{[0,T]}^2
				+4  \normm{ \int_{-r}^0 \int_{u+}^{u+\bigcdot}  H_{s-}  d\td S_s \nu (du) }_{[0,T]}^2
					=:\mathrm{\bf III}+ \mathrm{\bf IV}.
			\end{align*}
			By similar workings as in $\mathrm{\bf I}$, we have $\mathrm{\bf III}
				\le 
				4\kappa_2^2T\norm{\nu}^2 \Em \left[ \int_{0}^T |H_s|^2 ds \right].$
			% \begin{align*}
			% 	\mathrm{\bf III}
			% 	\le 
			% 	4\kappa_2^2\norm{\nu}^2T\cdot \Em \left[ \int_{-r}^T |H_s|^2 ds \right]
			% 	.
			% \end{align*}
			For $\mathrm{\bf IV}$, 
			recall $\td S$ is a square integrable martingale and $d\<\td S \>_t = \kappa_4 dt$. 
			Since $\norm{H}_T<\infty$, $H$ is clearly in $L^2(\td S)$, and the process $H\cdot \td S$ is a square integrable martingale by Lemma \ref{lemma/1/preliminaries/Jacod}. By Jensen's inequality, Doob's inequality and the Ito isometry, we have
			\begin{align*}
				\mathrm{\bf IV}
				% \le 4  \Em \left[\sup_{t\in[0,T)}
				% 	 \norm{\nu} \int_{-r}^0 \left|\int_{u+}^{t+u}  H_{s-}  d\td S_s\right|^2 |\nu| (du) \right]
				% 	    \\
				&\hspace{0cm}
				\le  4\norm{\nu} \int_{-r}^0 \Em \left[\sup_{t\in[0,T]} \left|\int_{u+}^{t+u}  H_{s-}  d\td S_s\right|^2\right] |\nu| (du)
				\le  16\norm{\nu} \int_{-r}^0 \Em \left[\left|\int_{u+}^{T+u}  H_{s-}  d\td S_s\right|^2\right] |\nu| (du)
				\\
				&\hspace{0cm}
				\le 16 \norm{\nu}^2 \sup_{u\in[-r,0]} \Em \left[\int_{u}^{T+u}  H_{s}^2  d \langle\td S,\td S\rangle_s\right] 
				\le 16 \kappa_4\norm{\nu}^2 \Em \left[\int_{-r}^{T}  H_{s}^2  ds\right]
			\end{align*}
			The lemma follows immediately by collecting all terms. 
		\end{proof} %%%
	\end{lemma}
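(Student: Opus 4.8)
The plan is to estimate the second moment of the running maximum of $\cR(H)$ by splitting $\cR$ into its deterministic-drift piece (the $\mu$-integral) and its stochastic piece (the $\nu$-integral against $S$), and then reducing the stochastic piece to a martingale after compensating $S$. Since $\cR(H)_u = 0$ for $u \le 0$, the supremum defining $\norm{\cR(H)}_T$ is effectively taken over $[0,T]$, so I would open with the inequality $(a+b)^2 \le 2a^2 + 2b^2$ to write $\norm{\cR(H)}_T^2 \le \mathrm{I} + \mathrm{II}$, where $\mathrm{I}$ collects the $\mu$-term and $\mathrm{II}$ the $S$-term.

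For the drift term $\mathrm{I}$, I would first apply Cauchy--Schwarz against the total-variation measure $|\mu|$ to extract the $\mu$-integral, at the cost of a factor $\norm{\mu}$ and a supremum over $u \in [-r,0]$, and then use Cauchy--Schwarz in the time variable on $\int_u^{u+t} H_s\, ds$ to produce the bound $t\int_u^{u+t} H_s^2\, ds$. Because $H$ vanishes on $[-r,0]$ and $u+t \le T$ for $u\le 0$, $t\le T$, this last integral is dominated by $\int_0^T H_s^2\, ds$, giving $\mathrm{I} \le 2\norm{\mu}^2 T\, \Em[\int_0^T H_s^2\, ds]$.

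For the stochastic term $\mathrm{II}$ the first move is to compensate: writing $dS_s = \kappa_2\, ds + d\td S_s$ with $\td S$ the square-integrable martingale of \eqref{equation/1/preliminaries/drivingnoise/tdS} and splitting once more via $(a+b)^2 \le 2a^2 + 2b^2$ separates a drift-type term $\mathrm{III}$ (handled verbatim like $\mathrm{I}$, yielding the $\kappa_2^2\norm{\nu}^2 T$ contribution) from a genuine martingale term $\mathrm{IV}$. I expect $\mathrm{IV}$ to be the main obstacle, since it couples an outer finite-variation integral against $\nu$ with an inner stochastic integral whose running supremum must be controlled. The route I would take is threefold: apply Jensen (equivalently Cauchy--Schwarz) with respect to the normalized measure $|\nu|/\norm{\nu}$ to move the square, and then by Tonelli push the supremum and the expectation inside the $\nu$-integral; apply Doob's $L^2$ maximal inequality to the inner martingale $t \mapsto \int_{u+}^{u+t} H_{s-}\, d\td S_s$, which is square integrable by Lemma \ref{lemma/1/preliminaries/Jacod} because $\norm{H}_T<\infty$ forces $H\in L^2(\td S)$, to remove the supremum; and finally invoke the It\^o isometry with $d\langle\td S\rangle_s = \kappa_4\, ds$ to convert the second moment into $\kappa_4\,\Em[\int_u^{u+T} H_s^2\, ds] \le \kappa_4\,\Em[\int_{-r}^T H_s^2\, ds]$. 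Tracking the constants (the factor $4$ from Doob, the two $\norm{\nu}$-factors, and the leading $4$) assembles the $16\kappa_4\norm{\nu}^2$ contribution.

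Adding $\mathrm{I} + \mathrm{III} + \mathrm{IV}$ then produces the first inequality with the stated $K_T$, whose finiteness is immediate from $\norm{\mu}, \norm{\nu} < \infty$ together with $\kappa_2, \kappa_4 < \infty$ under Assumption \ref{assumptions/1/existence}(\ref{assumptions/1/existence/S}). The second inequality is routine: bounding $\Em[H_s^2] \le \norm{H}_T^2$ pointwise and integrating over $[-r,T]$ contributes the factor $(T+r)$, giving $K'_T = K_T(T+r)$. The only points demanding care are the integrability hypotheses behind the martingale inequalities and the bookkeeping of the integration regions, namely that $\int_u^{u+t} \subseteq \int_{-r}^T$ whenever $u \in [-r,0]$ and $t \le T$, which is what allows the region-dependent bounds to be replaced by fixed integrals over $[0,T]$ or $[-r,T]$.
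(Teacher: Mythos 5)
Your proposal follows essentially the same route as the paper's proof: the identical splitting into the $\mu$-term and the $S$-term, compensation of $S$ by $\td S_t = S_t - (t+r)\kappa_2$ to isolate a drift piece and a martingale piece, Cauchy--Schwarz for the drift-type integrals, and Jensen/Doob/It\^o isometry (with $d\langle \td S\rangle_t = \kappa_4\, dt$) for the martingale piece, with the same constant bookkeeping yielding $K_T$ and $K'_T = K_T(T+r)$. The argument is correct as proposed.
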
 %%%

	\begin{proof}[Proof of Theorem \ref{theorem/1/existence}]
	%%%%%%%%%%%%%%%%%%%%%%%%%%%%%%%%%%%%%%%%%%%%%
	\pf{(\ref{theorem/1/existence/1}).}
		% First, observe that by definition, $\norm{|\cdot|}_T=\norm{\cdot}_T$. 
		Let $S$ and $\Phi$ satisfy Assumptions \ref{assumptions/1/existence} and $\theta$ be defined in \eqref{equation/1/model/theta}. For the existence of a solution, we use a  Picard iteration to produce a sequence of $\cS^2$-processes that converges to a limit. 
		Set the initial term $X^{(0)}:= \theta\in \cS^2$, and define recursively for each $n\ge  1$ the process $X^{(n)}:= \theta+\cR X^{(n-1)}$. We see that the differences between each term are given by $X^{(1)}-X^{(0)}=\cR(\theta)$ and $X^{(n)}- X^{(n-1)}= \cR(X^{(n-1)}- X^{(n-2)})$, for $n\ge 2$. 
		
		Write $D_{n,T}:=\norm{X^{(n+1)}-X^{(n)}}_T$, so that $D_{0,T}=  \norm{\cR(\theta)}_T$  and $D_{n,T} = \norm{\cR\left(X^{(n)}-X^{(n-1)}\right)}_T$, for $n\ge 1$. 
		The first term $D_0$ is finite by an application of Lemma \ref{lemma/1/existence/norm} to $H=\theta$. Since $X^{(n+1)} = \theta + \cR(X^{(n)})$, by Lemma \ref{lemma/1/preliminaries/Jacod} and the second bound in Lemma \ref{lemma/1/existence/norm}, $X^{(n+1)}$ is in $\cS^2$  whenever $X^{(n)}$ is in $\cS^2$. Therefore by induction, for each $n\ge 1$, the difference $X^{(n)}-X^{(n-1)}$ is in $\cS^2$ and we can apply Lemma \ref{lemma/1/existence/norm} to each $D_{n,T}$. 

		Since $t\mapsto D_{n,t}$ is non-decreasing and non-negative on $[-r,T]$, applying Lemma \ref{lemma/1/existence/norm} to each $D_{n,T}$ and expanding the recursion yields a Gronwall type inequality
		\begin{align*}
			D_{n,T}^2 &\le K_T  \int_{0}^{T} D_{n-1,t_2}^2 dt_2
			\le K_T ^{n} \int_{0}^{T} \int_{0}^{t_2}\cdots \int_{0}^{t_{n}} 
				D_{0,t_{n+1}}^2 d{t_{n+1}}\cdots d{t_3} d{t_2}
			\le \frac{K_T ^{n} T^n}{n!} D_{0,T}^2.
		\end{align*}
		The sequence $(D_{n,T})_n$ is therefore Cauchy for each $T>0$. Since $\scD[-r,\infty)$ is complete in $\norm{\cdot}_T$, taking $n\to \infty$, the sequence of processes 
		\begin{align*}
			X^{(n)}=X^{(0)}+\sum_{k=1}^n \left(X^{(k)}-X^{(k-1)}\right)
		\end{align*}
		converges in $\norm{\cdot}_T$ to a limit $X$, which is also in $\cS^2$. 

		It remains to show that this limit $X$ is indeed a solution to \eqref{equation/1/model/CDGARCH_X}, i.e. satisfies $X=\theta+\cR(X)$. First, observe that since $(D_{n,T})_n$ is summable for every $T>0$, 
		\begin{align}
			\norm{X-X^{(n)}}_T
			&=\normm{\sum_{k=n}^\infty X^{(k+1)}-X^{(k)}}_T\le \sum_{k=n}^\infty D^{(k)}(T)\to0,
			\label{equation/1/existence/proof/Xn_norm}
		\end{align}
		as $n\to\infty$. Using $X^{(n)} = \theta + \cR(X^{(n-1)})$, by the second bound from Lemma \ref{lemma/1/existence/norm}, we also have
		\begin{align*}
		 	\norm{\theta+\cR X-X^{(n)}}^2_T=\norm{\cR (X-X^{(n-1)})}^2_T\le K'_T \norm{X-X^{(n-1)}}^2_T\to0.
		\end{align*}
		Then by the triangle inequality, for any $n\ge 1$, 
		\begin{align*}
			\norm{\theta+\cR X-X}_T\le \norm{\theta+\cR X- X^{(n)} }_T+\norm{X- X^{(n)} }_T,
		\end{align*}
		which implies that $\sup_{0\le s\le  T}|\theta_s+\cR X_s-X_s|=0$ a.s. and  hence  \eqref{equation/1/model/CDGARCH_X} is satisfied on any $[0,T]$. 

		To establish uniqueness, suppose $X$ and $X'$ are two strong solutions to \eqref{equation/1/model/CDGARCH_X}, i.e., we have $X=\theta+\cR(X)$ and $X'=\theta+\cR(X')$. 
		% Since $X$ is $\norm{\cdot}_T$ limit of $ L ^2\loc(\td S)$ sequence $X^{(n)}$, by \eqref{equation/1/existence/proofXnnormBounded}, \eqref{equation/1/existence/proof/Xn_norm} and the triangle inequality,
		% \begin{align}
		% 	\norm{X}_T\le \norm{X^{(n)}}_T+ \norm{X-X^{(n)}}_T<\infty,\label{equation/pq_L2BoundForXX'}
		% \end{align} 
		% and $\sup_{t\in [-r,T)}|X_t|^2<\infty$ almost surely for any $T$. 
		%  $L^2\loc(\td S)$ processes, since
		% \begin{align*}
		% 	\Em \left[\int_{-r+}^T X_{s-}^2 d\<\td S,\td S\>_s\right]
		% 	&\le \Em \left[ \<\td S,\td S\>_T\sup_{t\in[-r,T)} X_t^2\right]
		% 	<\infty.
		% \end{align*}
		Let $D_T:=\norm{X-X'}_T=\norm{\cR(X-X')}_T$.
		By Lemma \ref{lemma/1/existence/norm}, for any $0\le T< \infty$,  we have
		$D_T^2\le K_T \int_{-r}^T D_t^2 dt$,
		where $K_T$ is defined in Lemma \ref{lemma/1/existence/norm}. By Gronwall's inequality (Thm V.68, \cite{Protter2004}), $\norm{X-X'}^2_T=0$ for all $t\ge 0$  which implies that $\sup_{-r\le s\le T}\left|X_s-X'_s\right|=0$, almost surely, and the two solutions are indistinguishable. 

%%%%%%%%%%%%%%%%%%%%%%%%%%%%%%%%%%%%%%%%%%%%%
\vspace{2mm}
	\pf{(\ref{theorem/1/existence/2}).}
		Since $|x|^\alpha\le  1+ |x|^2$ for any $0\le \alpha\le 2$ and $x\in\Rm$, we have, 
		\begin{align*}
			\Em \left[(X_T^*)^\alpha \right]
			\le 1 + \Em \left[(X_T^*)^2\right] <\infty.
		\end{align*}
		Hence the function $t\mapsto \Em [|X_t|^\alpha]$ is finite-valued. Since $X$ is c\`adl\`ag, by the dominated  convergence theorem with $|X^*_T|^\alpha$ as dominating functions, $t\mapsto \Em [|X_t|^\alpha]$ is a c\`adl\`ag function on $[0,\infty)$ for $0\le \alpha\le 2$. 
	\end{proof}

\subsection{Properties of the solution}\label{section/proofs/1/semimartingale}
\begin{proof}[Proof of Proposition \ref{proposition/1/semimartingale}]
%%%%%%%%%%%%%%%%%%%%%%%%%%%%%%%%%%%%%%%%%%%%%
\pf{(\ref{proposition/1/semimartingale/kernelF}).}
	We first rewrite $F_\nu(t,s) = \int_{-q}^0 \ind_{[s-t,s]}(u)f_{\nu}(u)du$. For any $(t_2, s_2)$ and $(t_1,s_1)\in\Rm^2$, by the triangle inequality, $F_\nu$ is Lipschitz on $\Rm^2$ since
	\begin{align*}
		&|F_\nu(t_2,s_2) - F_\nu(t_1, s_1)| 
		 \le |F_\nu(t_2,s_2) - F_\nu(t_1, s_2)|  + |F_\nu(t_1,s_2) - F_\nu(t_1, s_1)| 
		   \\
		&\qquad \le \int_{-q}^0 \left|\ind_{[s_2-t_2, s_2]}(u) - \ind_{[s_2-t_1, s_2]}(u)\right| f_{\nu}(u) du
		+ \int_{-q}^0 \left|\ind_{[s_2-t_1, s_2]}(u) - \ind_{[s_1-t_1, s_1]}(u)\right| f_{\nu}(u)du
		   \\
		&\qquad \le \norm{f_\nu}
			\Big(|t_2-t_1| + 2|s_2-s_1|\Big) 
		\le 2 \norm{f_\nu} \Big|(t_2,s_2) - (t_1,s_1)\Big|,
	\end{align*}
	where $\norm{\cdot}$ is the sup-norm and $|\cdot|$ is the Euclidean distance on $\Rm^n$. Similarly for $F_\mu$.

%%%%%%%%%%%%%%%%%%%%%%%%%%%%%%%%%%%%%%%%%%%%%%%%
\vspace{2mm}
\pf{(\ref{proposition/1/semimartingale/Xi}).} 
	Since $F_\mu$ and $F_\nu$ are identically zero whenever $s\ge t$ or $s\le -r$, we will omit the region of integration and write $\Xi(X)_t = \int F_\mu(t,s) X_{s-}   ds + \int F_\nu(t,s) X_{s-}   dS_s$.
	
	Since for almost every $\omega\in\Omega$, $t\mapsto S_{t}(\omega)$ is a non-decreasing c\`adl\`ag function, we will fix such an  $\omega$ and treat the stochastic integral above as a Lebesgue-Stieljes integral with respect to the function $t\mapsto S_{t}(\omega)$. 
	Since $F_\mu$ and $F_\nu$ vanishes for $s\notin(-r,t)$, by Proposition \ref{proposition/1/semimartingale}(\ref{proposition/1/semimartingale/kernelF}), for any $t_2, t_1\in\Rm_+$, 
	\begin{align*}
		\left| \Xi(X)_{t_2} - \Xi(X)_{t_1} \right| 
		& \le 
		\int\left|F_\mu(t_2,s)-F_\mu(t_1,s)\right| |X_{s-}| ds
		+
		\int\left|F_\nu(t_2,s)-F_\nu(t_1,s)\right| |X_{s-}| dS_s
		   \\
		&\le  2 \norm{f_\mu} \left(\int_{-p }^{t_2\maxx t_1}|X_s| ds\right)|t_2-t_1|
		+ 2 \norm{f_\nu} \left(\int_{-q+}^{t_2\maxx t_1}|X_{s-}| dS_s\right)|t_2-t_1|.
	\end{align*}
	It follows that $t\mapsto \Xi(X)_t$ is locally Lipschitz continuous almost surely, since with probability one $X$ is locally bounded and $S$ has finite variation on compacts.

	We first compute $d F_\nu(t,s)/dt$ - the case of $F_\mu$ is identical and omitted. 
	In the expression $F_\nu(t,s) = \int_{-q}^0 \ind_{[s-t,s]}(u)f_{\nu}(u)du$,
	the integrand clearly does not depend on $t$ whenever $t\notin(s\maxx0, s+q)$, 
	hence $t\mapsto F_\nu(t,s)$ is constant on these regions and $d F_\nu(t,s)/dt=0$. 
	On the interval $t\in (s\vee0, s+q)$, we have $F_\nu(t,s) = \int^{s\wedge0}_{s-t} f_\nu(u) du$ so by the Fundamental Theorem of Calculus, $t \mapsto F_\nu(t,s)$ is continuously differentiable and $d F_\nu(t,s)/dt = f_\nu(s-t)$ on the interval $t\in (s\vee0, s+q)$. 
	We can therefore write $d F_\nu(t,s)/dt = f_\nu(s-t)\ind_{[s\maxx0, s+q]}(t)$ for almost every $t\ge 0$. 
	Clearly, $t\mapsto F_\nu(t,s)$ is not differentiable at $t = s\maxx 0$ or $t = s+q$, unless $f_\nu(0)$ and $f_\nu(-q)$ are equal to zero. 
	% Therefore, we can write $\frac{d}{dt}F_\nu(t,s) = f_\nu(s-t)\ind_{[s\maxx0, s+q]}(t)$ for almost every $t\ge 0$. 

	We now compute the derivative of the second integral in \eqref{equation/1/semimartingale/Xi(X)VolterraKernel}:
	\begin{align*}
		I_t:=\int F_\nu(t,s) X_{s-} dS_s, t\ge 0. \numberthis\label{equation/1/semimartingale/proof/Iasintegral}
	\end{align*}
	The case of the first integral is similar and omitted. Again, we fix an $\omega\in \Omega$ such that $t\mapsto S_t$ is a non-decreasing c\`adl\`ag function and treat the $dS$ integral as a Stieljes integral. 
	For every $t\in\Rm_+$, the map $s\mapsto F_\nu(t,s)X_{s-}$ is in $L\loc^1(\Rm, dS)$ since $X$ and $S$ are locally bounded and $s\mapsto F_\nu(t,s)$ is supported on a compact set.
	For every $s\in\Rm$, the map $t\mapsto F_\nu(t,s)X_{s-}$ is continuously differentiable in $(s\maxx0, s+q)$ by the previous argument. For every $t$, the derivative $s\mapsto \frac{d}{dt} F_\nu(t,s)X_{s-}$ is locally bounded and hence also in $L\loc^1(\Rm, dS)$. Then by the differentiation lemma (\cite[Theorem 6.28]{Klenke2013}), $t\mapsto I_t$ is differentiable almost everywhere with derivative
	\begin{align*}
		\frac{d}{dt} I_t & 
		= \int f_\nu(u-t) \ind_{[u\vee0, u+q]}(t)  X_{u-} dS_u
		= \int_{(t-q, t]} f_\nu(u-t) X_{u-} dS_u,\quad t>0.
		\numberthis\label{equation/1/semimartingale/proof/Iasintegralderivative}
	\end{align*}
	The expression \eqref{equation/1/semimartingale/xi} then follows with a simple change of variable.
\end{proof}

%%%%%%%%%%%%%%%%%%%%%%%%%%%%%%%%%%%%%%%%%%%%%%%%
\begin{proof}[Proof of Theorem \ref{theorem/1/semimartingale}]
	Recalling $\nu(du) = c_\nu \delta_0(du) + f_\nu(u) du$, we have
	\begin{align*}
		\int_{-q}^0\int_{u+}^{t+u}  X_{s-}   dS_s \nu(du) 
		& =  c_\nu \int_{0+}^{t} X_{s-} dS_s + \int_{-q}^{0-} \int_{u+}^{t+u} X_{s-} dS_s f_\nu(u) du
		:= \mathrm{\bf I}+\mathrm{\bf II}.
	\end{align*}
	Since $X\in \cS^2$  and is hence locally bounded and progressively measurable, by Fubini's theorem, 
	exchanging the order of integration of $\mathrm{\bf II}$ gives
	\begin{align*}
		\mathrm{\bf II}
		& = \int_{(-q,t]}\left(\int \ind_{[-q,0)}(u) \ind_{(u,t+u]}(s)  X_{s-} f_\nu(u) du \right)  dS_s
		  = \int_{(-q,t]} F_\nu(t,s) X_{s-} dS_s,
	\end{align*}
	for $t\ge 0$, where the kernel $F_\nu$ is given by \ref{equation/1/semimartingale/kernelF}. 
	% Similarly, the $d\mu$ integral in \eqref{equation/1/model/CDGARCH_X} is given by
	% \begin{align*}
	% 	\int_{-q}^0\int_{u+}^{t+u}  X_{s-}   ds \mu(du) 
	% 	& =  -c_\mu \int_{0}^{t} X_{s} ds + \int_{-p}^{0} \int_{u}^{t+u} X_{s} ds f_\mu(u) du,
	% \end{align*}
	% and the integral equation \eqref{equation/1/semimartingale/XwithFkernel} follows immediately from the same computation as $\mathrm{\bf II}$.
	The computations for the $d\mu$ integral in \eqref{equation/1/model/CDGARCH_X} are exactly the same and the integral equation \eqref{equation/1/semimartingale/XwithFkernel} follows immediately.

	For the functional differential equation, first observe that $t \mapsto I_t$ in \eqref{equation/1/semimartingale/proof/Iasintegral} is Lipschitz and hence absolutely continuous. Hence $I_t  -I_0 = \int_{(0, t]} \frac{d}{dt}I_s ds$, where $\frac{d}{dt}I_t$ is given by \eqref{equation/1/semimartingale/proof/Iasintegralderivative},
	with $I_0 =0 $ since $F_\nu(0,s)=0$ for any $s$. The integral involving $F_\mu$ can be differentiated in exactly the same way. The functional differential equation follows immediately.

	Finally, since $S$ is of finite variation and $X$ is c\`adl\`ag, $X$ is a semimartingale with finite variations by Theorem I.4.31 of \citet{JacodShiryaev2013}. 
\end{proof}

	%%%%%%%%%%%%%%%%%%%%%%%%%%%%%%%%%%%%%%%%%%%%
\begin{proof}[Proof of Proposition \ref{proposition/1/semimartingale/pathofX}]
	\pf{(\ref{proposition/1/semimartingale/pathofX/jumps})} follows immediately from \eqref{equation/1/semimartingale/XSDDE}. 

	\pf{(\ref{proposition/1/semimartingale/pathofX/betweenjumps}).}
	On $(T_j, T_{j+1})$, $S_t = S(T_j) $, so by (\ref{proposition/1/semimartingale/pathofX/jumps}) of Proposition \ref{proposition/1/semimartingale/pathofX}, $X$ is continuous on $(T_j, T_{j+1})$. 
	With the normalization $f(-q) =0$, $\Delta \xi(X)_t = f_\nu(0)X_{t-}\Delta S_t$, so $\xi(X)$ is continuous on $(T_j, T_{j+1})$ as well. The rest of the proposition follows immediately.
\end{proof}

%%%%%%%%%%%%%%%%%%%%%%%%%%%%%%%%%%%%%%%%%%%%%%%%%%%%%%
\begin{proof}[Proof of Proposition \ref{proposition/1/solution/CPPpositivity}]
		Suppose $X_t\ge x^-$ for all $t\in[-r,T]$,  for some $T\ge 0$ and let $T' := \inf\{t>T, \Delta S >0\}$. Since $S$ is a compound Poisson process, almost surely $T'>T$ and the interval $[T, T')$ is non-empty.
		Then by Proposition \ref{proposition/1/semimartingale/pathofX}(\ref{proposition/1/semimartingale/pathofX/betweenjumps}), $X$ is continuously differentiable in $[T, T')$ with derivative given by $\dot X_t = \eta - c_\mu X_t + \xi(X)_t$. 
		Since $\Delta X_t\ge 0$ whenever $X_{t-}\ge 0$, by iterating this argument, it suffices to show that $X_t \ge x^-$ for all $t\in[T,T')$.
		
		Let $T'' := \inf\{t>T, X_t< x^-\}$ and suppose for a contradiction that $T''\le T'$ with positive probability. Since $X$ is a.s. continuous at $T''$, necessarily $X_{T''} = x^-$ and $\dot X_{T''} <0$. But 
		\begin{align*}
			\frac{d}{dt}X_t\Big|_{t = T''} & \ge   \eta - c_\mu X_{T''} + \int_{-p}^0 X_{T''+u} f_\mu(u) du \ge 0
		\end{align*}
		almost surely, which contradicts our assumption. 		
\end{proof}

%%%%%%%%%%%%%%%%%%%%%%%%%%%%%%
\subsection{Approximation by processes of finite activity}
%%%%%%%%%%%%%%%%%%%%%%%%%%%%%%%%%%%%%%%%%%%%%%%%%%%%%%
\begin{proof}[Proof of Proposition \ref{proposition/1/solution/approximation}]
	\pf{(\ref{proposition/1/solution/approximation/Sn to S}).}
	From the construction of $S^n$ in \eqref{equation/1/solution/Sn without BM}, we have
	\begin{align*}
		S_u  - S_u^n = \int_{-r}^u \int_{0<|z|<\frac{1}{n}} z^2 N(dz,ds),
	\end{align*}
	which in non-decreasing in $u$. Fixing $t>0$, we have
	\begin{align*}
		\Em\left[\sup_{u\le t}|S_u-S^n_u|^2\right] 
		& =  \Em\left[\left(\int_{-r}^t\int_{0<|z|<\frac{1}{n}}z^2 N(dz, ds)\right)^2 \right]
		= \int_{-r}^t\int_{0<|z|<\frac{1}{n}}z^4 \Pi_L(dz) .
	\end{align*}
	Since $n\ge 1$, the integrand is dominated by $z^2$ which is $d \Pi_L$ integrable. By the dominated convergence theorem,
	$\Em\left[\sup_{u\le t}|S_u-S^n_u|^2\right] \to 0$ as $n\to \infty$. That is, $S^n$ approximates $S$ in each $\norm{\cdot}_t$, $t>0$. This clearly implies convergence in the ucp topology.

%%%%%%%%%%%%%%%%%%%%%%%%%%%%%%%%%%%%%%%%%%%%%%%
	\vspace{2mm}
	\pf{(\ref{proposition/1/solution/approximation/functional lipschitz}).}
	Let $Y$ and $Z$ be c\`adl\`ag processes in $\cS^2$,  then
	\begin{align*}
		|b(Y)_t - b(X)_t|
		& \le c_\mu |Y_t - Z_t| 
		+ \int_{t-p}^t f_\mu (u-t)|Y_u- Z_u| du
		+ \int_{t-q+}^t f_\nu (u-t)|Y_u- Z_u| dS_u
		   \\
		& \le \sup_{s\le t} |Y_s - Z_s| \left(c_\mu + \norm{f_\mu }_{L^1} + \int_{t-q+}^t f_\nu (u-t) dS_u\right).
	\end{align*}
	Since $f_\nu$ is assumed to be continuous and normalized to $f(-q)=0$, integrating by parts gives
	\begin{align*}
		\int_{t-q+}^t f_\nu (u-t) dS_u = S_t f_\nu(0)  - \int_{q+}^0 S_{t+u} df_\nu(u)\le S_t^* (f_\nu(0)+ \norm{f_\nu}_{L^1}),
	\end{align*}
	which implies that $b$ is functional Lipschitz. For each $b^n$, it suffices to carry through the same computation and observe that by construction, $S^n_t\le S_t$ for each $n\ge 1$ and $t>0$.

%%%%%%%%%%%%%%%%%%%%%%%%%%%%%%%%%%%%%%%%%%%%%%%
	\vspace{2mm}
	\pf{(\ref{proposition/1/solution/approximation/bn to b}).}
	From the definitions of $b^n$, for each $t\ge 0$, 
	\begin{align*}
		b(X)_t -  b^n(X)_t 
		=
		\int_{t-q+}^t f_\nu(u-t) X_{u-} dS_u - \int_{t-q+}^t f_\nu(u-t) X_{u-} dS_u^n.
	\end{align*}
	By the construction of $S^n$, we have
	% \begin{align*}
	% \left|b(X)_t -  b^n(X)_t  \right|
	% & \le  \sum_{t-q<u\le t} |f_\nu(u-t)| |X_{u-}| \Delta S_u^n\ \ind_{ \left\{\Delta S_u < \frac{1}{n^2}\right\} }
	%    \\
	% & \le \frac{1}{n} \sum_{a<u\le b} |f(u)| |X_{u-}| \ind_{ \{(\Delta L_u)^2 >0\} }
	% \end{align*}
	\begin{align*}
		\left|b(X)_t -  b^n(X)_t  \right|
		\le  \sup_{u\in(t-q, t]} |f_\nu (u-t) X_{u-}| \big(S_t - S_{t}^n - (S_{t-q+} - S_{t-q+}^n)\big),
	\end{align*} 
	which converges to zero in $\norm{\cdot}_t$ for each $t$ and hence in ucp by Proposition \ref{proposition/1/solution/approximation}(\ref{proposition/1/solution/approximation/Sn to S}).
\end{proof}

%%%%%%%%%%%%%%%%%%%%%%%%%%%%%%%%%%%%%%%%%%%%%%%%%%%
\begin{proof}[Proof of Theorem \ref{theorem/1/solution/approximation}]
	The claim directly follows from Proposition \ref{proposition/1/solution/approximation} and Theorem V.15 of \cite{Protter2004}. More accurately, we invoke a trivial extension of Theorem V.15 of \cite{Protter2004} to the case with multiple driving semimartingales (see comments on page 257 of \cite{Protter2004}).  
\end{proof} 

%%%%%%%%%%%%%%%%%%%%%%%%%%%%%%%%%%%%%%%%%%%%%%%%%%%
\begin{proof}[Proof of Theorem \ref{theorem/1/solution/positivity}]
	For a given $S$ of the form \eqref{equation/1/solution/S without BM} satisfying Assumption \ref{assumptions/1/existence}(\ref{assumptions/1/existence/S}), let $(S^n)_n$ be as defined in \eqref{equation/1/solution/Sn without BM}. By Theorem \ref{theorem/1/existence}, we can set $X$ and $(X^n)_n$ to be unique solutions to \eqref{equation/1/semimartingale/XSDDE} and \eqref{equation/1/solution/approximation/sdden} driven by $S$ and $(S^n)_n$ respectively.

	By Theorem \ref{theorem/1/solution/approximation}, $X^n$ converges to $X$ in ucp, which trivially implies that for each $t>0$, $X^n_t \to X_t$ in probability and hence in distribution.  
	Furthermore, since each $S^n$ is a compound Poisson process by construction, by Proposition \ref{proposition/1/solution/CPPpositivity}, for each $n\ge 1$ and $t>0$, we have $X^n_t\ge  x^-$ with probability one, where $x^->0$ is defined in \eqref{equation/1/semimartingale/lowerbound}. 
	Finally, since $(-\infty, x^-)$ is open in $\Rm$, by the Portmanteau theorem of weak convergence (Theorem 2.1 \citet{Billingsley2013}), we have for each $t>0$,
	\begin{align*}
		\Pm(X_t <x^-)\le   \liminf_n \Pm(X_t^n<x^-) =0,
	\end{align*}   
	which completes the proof.
\end{proof}

		%!TEX root = ../main.tex
\subsection{Moment bounds}\label{section/proofs/1/moments}

We precede the proof of Theorem \ref{theorem/1/moments/moment bound} with the following two lemmas.
%%%%%%%%%%%%%%%%%%%%%%%%%%%%%%%%%%55
\begin{lemma}[Lemma 8.1 - 8.2,  \citet{ItoNisio1964}]\label{lemma/1/stationarity/ItoNisio}
	Suppose $x,y:[0, \infty)\to \Rm_+$ are continuous functions, $\alpha>0$ and $\lambda_1>\lambda_2>0$. For every $0\le t<\infty$, 
	\begin{enumerate}[(a)]
	\item \label{lemma/1/stationarity/ItoNisio/1} if $x_t\le  x_0 - \lambda_1\int_{0}^t x_u du + \int_0^t y_u du$, then $x_t\le x_0 + \int_0^t e^{-\lambda_1(t-u)} y_u du$;

	\item \label{lemma/1/stationarity/ItoNisio/2} if  $x_t\le  \alpha + \lambda_2\int_{0}^t e^{-\lambda_1 (t-s)}x_u du$, then $x_t \le \alpha\lambda_1/({\lambda_1 - \lambda_2})$.\qed
	\end{enumerate}
\end{lemma}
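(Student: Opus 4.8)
The plan is to treat both parts as scalar comparison (Gronwall-type) estimates, in each case reducing a self-referential integral inequality to a first order linear differential inequality that can be integrated in closed form. For part (a) I would work from the differential inequality that the hypothesis encodes, namely $\frac{d}{dt} x_t \le -\lambda_1 x_t + y_t$ (equivalently, the integral bound is assumed across every subinterval, not merely those starting at $0$; this is the form in which such an estimate is produced when one integrates a pointwise drift bound). Multiplying by the integrating factor $e^{\lambda_1 t}$ converts the left side into an exact derivative, $\frac{d}{dt}(e^{\lambda_1 t} x_t) \le e^{\lambda_1 t} y_t$. Integrating from $0$ to $t$ and dividing by $e^{\lambda_1 t}$ gives $x_t \le x_0 e^{-\lambda_1 t} + \int_0^t e^{-\lambda_1(t-u)} y_u\,du$. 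Since $x_0 \ge 0$ and $e^{-\lambda_1 t} \le 1$, the first term is bounded by $x_0$, which yields the (slightly weaker) stated conclusion; nonnegativity of $x$ is precisely what licenses this last replacement.

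For part (b) the key device is the auxiliary function $v(t) := \int_0^t e^{-\lambda_1(t-u)} x_u\,du$, which satisfies $v(0) = 0$ and, upon differentiating the equivalent expression $v(t) = e^{-\lambda_1 t}\int_0^t e^{\lambda_1 u} x_u\,du$, the identity $\dot v(t) = x_t - \lambda_1 v(t)$. Substituting the hypothesis $x_t \le \alpha + \lambda_2 v(t)$ closes this into the differential inequality $\dot v(t) \le \alpha - (\lambda_1 - \lambda_2) v(t)$. Because $\lambda_1 > \lambda_2 > 0$, the integrating factor $e^{(\lambda_1 - \lambda_2) t}$ applies, and integration gives the uniform bound $v(t) \le \frac{\alpha}{\lambda_1 - \lambda_2}(1 - e^{-(\lambda_1 - \lambda_2) t}) \le \frac{\alpha}{\lambda_1 - \lambda_2}$. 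Feeding this back into the hypothesis produces $x_t \le \alpha + \lambda_2 v(t) \le \alpha + \frac{\lambda_2 \alpha}{\lambda_1 - \lambda_2} = \frac{\lambda_1 \alpha}{\lambda_1 - \lambda_2}$, as claimed.

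The two integrating-factor computations are routine, and the finiteness and differentiability of $v$ needed in (b) follow from continuity of $x$ (hence its local boundedness on $[0,t]$). The step I would be most careful about is the passage from the self-referential hypothesis to an honest differential inequality: in (b) it is the observation that the exponential kernel makes $v$ differentiable with $\dot v = x_t - \lambda_1 v$, turning a Volterra-type inequality into a genuine ODE; in (a) it is that one must bound $x_t$ itself (rather than its running integral), so the estimate has to be read as a pointwise/increment inequality and the nonnegativity of $x$ and $y$ must be invoked to reach the stated constant $x_0$.
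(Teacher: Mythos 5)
The paper never proves this lemma at all: it is imported as a black box from \citet{ItoNisio1964} (their Lemmas 8.1--8.2), so your self-contained argument is necessarily a different route, and it is worth having. Your part (b) is correct and complete exactly as written: continuity of $x$ makes $v(t)=\int_0^t e^{-\lambda_1(t-u)}x_u\,du$ continuously differentiable with $\dot v=x_t-\lambda_1 v(t)$, the hypothesis closes this into $\dot v\le \alpha-(\lambda_1-\lambda_2)v$, and the integrating factor gives $v\le \alpha/(\lambda_1-\lambda_2)$ and hence $x_t\le \lambda_1\alpha/(\lambda_1-\lambda_2)$. Note that in (b) the integral-from-zero hypothesis genuinely suffices, so that half needs no reinterpretation.

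Part (a) is the delicate half, and the caveat you flagged is not a stylistic preference but a necessity: as literally stated (inequality required only over intervals $[0,t]$), part (a) is false. Take $\lambda_1=1$, $x_0=1$, $y$ continuous and supported on $[0,1]$ with $\int_0^1 y_u\,du=10$, and let $x$ fall linearly from $1$ to $0$ on $[0,\epsilon]$, vanish on $[\epsilon,10-\epsilon]$, rise linearly to the value $10$ at $t=10$, then return to $0$; for small $\epsilon$ one has $\int_0^t x_u\,du\le 0.11$ for every $t$, so $x_t\le 1+\int_0^t y_u\,du-\int_0^t x_u\,du$ holds everywhere, yet $x_{10}=10$ while the asserted bound at $t=10$ is $1+10e^{-9}$. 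The lemma is true only under the stronger reading you adopt, namely that the inequality holds across every subinterval, $x_t-x_s\le-\lambda_1\int_s^t x_u\,du+\int_s^t y_u\,du$, and that is precisely the form supplied where the paper uses it: in the proof of Theorem \ref{theorem/1/moments/moment bound} the hypothesis arises by bounding the integrand of the exact identity \eqref{equation/1/proofs/1/stationarity/pqUI/E_n(t)}, so it holds between any two times. The one repair I would make to your write-up of (a) is the appeal to $\frac{d}{dt}x_t$, which need not exist for merely continuous $x$: either run your computation with Dini derivatives and a comparison argument, or avoid differentiating $x$ altogether by multiplying the subinterval inequality by $\lambda_1 e^{-\lambda_1(t-s)}$, integrating in $s$ over $[0,t]$, and adding $e^{-\lambda_1 t}$ times the $s=0$ inequality; after Fubini the $x$-integrals cancel and one gets $x_t\le x_0e^{-\lambda_1 t}+\int_0^t e^{-\lambda_1(t-u)}y_u\,du$, from which your final observation $x_0e^{-\lambda_1 t}\le x_0$ gives the stated conclusion.
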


%%%%%%%%%%%%%%%%%%%%%%%%%%%%%%%%%%%%%%%%%
\begin{lemma}\label{lemma/1/stationarity/cross}
	Suppose Assumptions \ref{assumptions/1/existence} hold, let $(X_t)_{t\ge 0}$ be the unique strong solution to \eqref{equation/1/semimartingale/XSDDE} with initial condition $\Phi$ and let $(\xi(X)_t)_{t\ge 0}$ be as defined in \eqref{equation/1/semimartingale/xi}. 
	For $n\in\{1,2\}$, we have
	\begin{align*}
		\Em [|X_t^{n-1}\xi(X)_t|]\le C_k \sup_{u\in [t-r, t]}\Em[|X_u|^n],\quad t>0, 
	\end{align*}
	where $C_1 = \norm{f_\mu}_{L^1} + \kappa_2 \norm{f_\nu}_{L^1}$ and $C_2 = \norm{f_\mu}_{L^1} +  \kappa_4 \norm{f_\nu}_{L^2}$. 

	\begin{proof}
		For the case of $n=2$, by Fubini's theorem and the Cauchy-Schwartz inequality, 
		\begin{align*}
			\Em[|X_t\xi(X)_t| ] 
			&\le \Em\left[|X_t| \int f_\mu(u-t)|X_u| du\right] + \Em\left[|X_t| \int f_\nu(u-t)|X_{u-}| dS_u\right]
			\\
			&\le \int f_\mu(u-t)\Em[|X_t| |X_u|  ]du 
			+ \Em [X_t^2]^{\frac{1}{2}} \Em\left[\left(\int f_\nu(u-t) |X_{u-}| dS_u\right)^2 \right]^{\frac{1}{2}}
			   \\
			& \le \int f_\mu(u-t)\Em[X_t^{2}]^{\frac{1}{2}} \Em [X_u^2]^{\frac{1}{2}}du
			+ \Em [X_t^2]^{\frac{1}{2}} \kappa_4 \left(\int f_\nu(u-t)^2 \Em[X_u ^2] du\right)^{\frac{1}{2}}
			   \\
			& \le \Big(\norm{f_\mu}_{L^1} + \kappa_4 \norm{f_\nu}_{L^2}\Big) \sup_{u\in[t-r,t]}\Em[X_u^2].
		\end{align*}
		The case of $n=1$ easily follows from the same computations.
	\end{proof}

\end{lemma}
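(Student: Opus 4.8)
The plan is to estimate the two additive components of $\xi(X)_t$ separately. After the substitution $s=t+u$, \eqref{equation/1/semimartingale/xi} reads $\xi(X)_t = \int_{t-p}^t f_\mu(s-t)X_s\,ds + \int_{(t-q,t]} f_\nu(s-t)X_{s-}\,dS_s$; writing $A_t$ and $B_t$ for the Lebesgue and stochastic integrals respectively, the triangle inequality gives $\Em[|X_t^{n-1}\xi(X)_t|]\le \Em[|X_t|^{n-1}|A_t|]+\Em[|X_t|^{n-1}|B_t|]$, and I would bound each summand by a constant multiple of $\sup_{u\in[t-r,t]}\Em[|X_u|^n]$. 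The crucial elementary fact used throughout is that the delay supports force every argument $s$ above into $[t-p,t]$ or $(t-q,t]$, both contained in $[t-r,t]$, so any intermediate bound of the form $\Em[|X_s|^n]$ can be replaced by the asserted supremum.

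For the drift component I would first bound $|A_t|\le\int_{t-p}^t f_\mu(s-t)|X_s|\,ds$ and then interchange expectation and integration by Tonelli, the integrand being non-negative, to get $\Em[|X_t|^{n-1}|A_t|]\le \int_{t-p}^t f_\mu(s-t)\,\Em[|X_t|^{n-1}|X_s|]\,ds$. When $n=2$, Cauchy--Schwarz on the probability space bounds $\Em[|X_t||X_s|]$ by $\Em[X_t^2]^{1/2}\Em[X_s^2]^{1/2}\le\sup_{u\in[t-r,t]}\Em[X_u^2]$; when $n=1$ the factor $|X_t|^{n-1}$ is unity and the estimate is immediate. Integrating the weight produces the contribution $\norm{f_\mu}_{L^1}\sup_{u\in[t-r,t]}\Em[|X_u|^n]$ in both cases, matching the $f_\mu$-part of $C_1$ and $C_2$.

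The stochastic component $B_t$ is the crux. The key preparatory step is the decomposition $dS_s=d\td S_s+\kappa_2\,ds$ from \eqref{equation/1/preliminaries/drivingnoise/tdS}, where $\td S$ is a square integrable martingale with $d\langle\td S\rangle_s=\kappa_4\,ds$; Assumption \ref{assumptions/1/existence}(\ref{assumptions/1/existence/S}) secures this square integrability, and since $X\in\cS^2$ the predictable integrand $s\mapsto f_\nu(s-t)X_{s-}$ lies in $L^2(\td S)$, so by Lemma \ref{lemma/1/preliminaries/Jacod} the martingale part of $B_t$ is a true square integrable martingale and the It\^o isometry applies. For $n=1$ I would use the monotonicity of the subordinator $S$ and non-negativity of $f_\nu$ to write $|B_t|\le\int_{(t-q,t]}f_\nu(s-t)|X_{s-}|\,dS_s$; taking expectations annihilates the martingale part and leaves only the compensator, giving $\Em[|B_t|]\le\kappa_2\int_{t-q}^t f_\nu(s-t)\Em[|X_s|]\,ds\le\kappa_2\norm{f_\nu}_{L^1}\sup_{u\in[t-r,t]}\Em[|X_u|]$, which is the remaining term of $C_1$. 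For $n=2$ I would instead peel off the leading factor by Cauchy--Schwarz, $\Em[|X_t||B_t|]\le\Em[X_t^2]^{1/2}\Em[B_t^2]^{1/2}$, and control $\Em[B_t^2]$ via Minkowski applied to the decomposition: the It\^o isometry on the martingale part brings in $f_\nu$ to the \emph{second} power together with the fourth-moment constant, yielding an $\norm{f_\nu}_{L^2}$ factor, while Cauchy--Schwarz on the compensator part yields an $\norm{f_\nu}_{L^1}$ factor with $\kappa_2$; these together furnish the $f_\nu$-dependent part of $C_2$.

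The main obstacle I anticipate is the rigorous justification of the interchange of expectation with the stochastic integral and of the membership $f_\nu(\cdot-t)X_{\cdot-}\in L^2(\td S)$, both of which rest on the fourth-moment hypothesis Assumption \ref{assumptions/1/existence}(\ref{assumptions/1/existence/S}) and on the a priori regularity $X\in\cS^2$ granted by Theorem \ref{theorem/1/existence}. Once these are in place the appeal to Lemma \ref{lemma/1/preliminaries/Jacod} and the It\^o isometry is routine, and collecting the drift and stochastic contributions in each case yields the constants $C_1$ and $C_2$ as stated.
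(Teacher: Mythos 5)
Your overall strategy coincides with the paper's proof: split $\xi(X)_t$ into the $f_\mu$-drift part and the $f_\nu$-$dS$ part, handle the drift by Tonelli and Cauchy--Schwarz (your $n=1$ and $n=2$ computations there, and your $n=1$ compensator argument giving $\kappa_2\norm{f_\nu}_{L^1}$, match the paper exactly), and peel off $X_t$ from the stochastic part by Cauchy--Schwarz before controlling the second moment of the $dS$-integral. The one point of genuine divergence is that second moment, and there your bookkeeping does not land on the advertised constant. The paper bounds $\Em\bigl[\bigl(\int f_\nu(u-t)|X_{u-}|\,dS_u\bigr)^2\bigr]^{1/2}$ in a single step by $\kappa_4\bigl(\int f_\nu(u-t)^2\,\Em[X_u^2]\,du\bigr)^{1/2}$, never decomposing $dS$; your route via $dS_u = d\td S_u + \kappa_2\,du$, Minkowski, and the It\^o isometry is the careful one, but carried out it yields $\Em[B_t^2]^{1/2}\le\bigl(\kappa_4^{1/2}\norm{f_\nu}_{L^2}+\kappa_2\norm{f_\nu}_{L^1}\bigr)\bigl(\sup_{u\in[t-r,t]}\Em[X_u^2]\bigr)^{1/2}$, hence a final constant $\norm{f_\mu}_{L^1}+\kappa_4^{1/2}\norm{f_\nu}_{L^2}+\kappa_2\norm{f_\nu}_{L^1}$ rather than $C_2=\norm{f_\mu}_{L^1}+\kappa_4\norm{f_\nu}_{L^2}$. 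Your closing claim that the martingale and compensator pieces ``together furnish the $f_\nu$-dependent part of $C_2$'' is therefore not literally correct: $C_2$ contains no $\kappa_2\norm{f_\nu}_{L^1}$ term, and neither constant dominates the other in general.

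To be fair, your more careful computation exposes that the paper's own one-line estimate silently drops the compensator contribution of $dS$ and writes $\kappa_4$ where the isometry produces $\kappa_4^{1/2}$, so the discrepancy is as much a defect of the stated $C_2$ as of your proposal. The lemma as you prove it holds with your modified constant, and the downstream use in Theorem \ref{theorem/1/moments/moment bound} survives verbatim with $C_2$ replaced accordingly (the sufficient condition $C_2^->0$ shifts in the obvious way). But as a proof of the statement with the constant exactly as printed, your argument does not close, and you should either state the corrected constant or justify the paper's one-step bound --- which, as written, you cannot.
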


%%%%%%%%%%%%%%%%%%%%%%%%%%%%%%%%%%%
\begin{proof}[Proof of Theorem \ref{theorem/1/moments/moment bound}]
\pf{(\ref{theorem/1/moments/moment bound/L1uniform}) and (\ref{theorem/1/moments/moment bound/L2uniform}).}
%%%%%%%%%%%%%%%%%%%%%%%%%%%%%%%%%%%%%
	The following proof holds for both $n=1$ and $n=2$, with different corresponding constants.
	Let $X$ be a positive solution to \eqref{equation/1/semimartingale/XSDDE} with $\eta>0$. For $n=2$, it follows from Ito's Lemma (\cite[Theorem I.4.57]{JacodShiryaev2013}) that (the $n=1$ case is trivial), 
		\begin{align*}
			X_t^{n} =X_0^n+ n\int_0^t X_s^{n-1} \Big(\eta - c_\mu X_s + \xi(X)_s\Big) ds
			+ \sum_{0<s\le t} \left\{X_s^n - X_{s-}^n \right\}
			\numberthis\label{equation/1/proofs/1/stationarity/X^n by Ito's formula}
		\end{align*}
		where $\Delta X_t = c_\nu X_{t-} \Delta S_t$ and
		\begin{align*}
			\sum_{0<s\le t} \left\{X_s^2 - X_{s-}^2 \right\}
			=
			\sum_{0<s\le t} \left\{ (X_{s-} + c_\nu X_{s-} \Delta S_s)^2 - X_{s-}^2 \right\}
			=
			\sum_{0<s\le t} \left\{ X_{s-}^2 \big(2c_\nu \Delta S_s + c_\nu^2 (\Delta S_s)^2\big) \right\}.
		\end{align*}
		Let $E_n(t) := \Em[X_t^{n}]$ and put $K_1 := \kappa_2 c_\nu$ and $K_2:= \kappa_2 c_\nu + \frac{1}{2}\kappa_4c_\nu^2$.
		From \eqref{equation/1/proofs/1/stationarity/X^n by Ito's formula}, we have
		\begin{align*}
		E_n(t)  
		&= E_n(0) 
			+n \Em\left[ \int_0^t X_s^{n-1}
			 \Big( \eta  + (-  c_\mu+ K_n) X_s  \Big)  ds\right]
			+n \Em\left[ \int_0^t X_s^{n-1} \xi(X)_s ds 
				\right].
			\numberthis\label{equation/1/proofs/1/stationarity/pqUI/E_n(t)}
		\end{align*}
		Let $C_1, C_2$ be given as in Lemma \ref{lemma/1/stationarity/cross} and suppose
		\begin{align*}
			\lambda_n:= c_\mu - K_n - C_n >0.\numberthis\label{equation/1/proofs/1/stationarity/pqUI/lambda}
		\end{align*}
		An exercise in calculus gives $\sup_{x\ge 0} 2x\Big(\eta - \frac{1}{2}\lambda_2 x\Big) = \eta^{2}/\lambda_2 =:a_2$ and 
		$\sup_{x\ge 0} \Big(\eta - \frac{1}{2}\lambda_1 x\Big) = \eta =:a_1$ 
		% ,
		% and we write $a:= c+\epsilon$, for any $\epsilon>0.$
		Then for all $X_s\ge 0$,
		\begin{align*}
			n X_s^{n-1}\Big(
			\eta + \frac{1}{2}\big(-c_\mu + K_n +  C_n \big)
			 X_s \Big) < a_n,
		\end{align*}
		which we rearrange to get a bound for the integrand in the first integral in \eqref{equation/1/proofs/1/stationarity/pqUI/E_n(t)}:
		\begin{align*}
			&n X_s^{n-1}
			\Big(\eta + \big( - c_\mu+K_n\big) X_s \Big)
			< a_n - \frac{n}{2}\big( c_\mu - K_n + C_n  \big) X_s^{n}.
			\numberthis\label{equation/1/proofs/1/stationarity/pqUI/bound_on_eta_term}
		\end{align*}
		For the second integral of \eqref{equation/1/proofs/1/stationarity/pqUI/E_n(t)},  Lemma \ref{lemma/1/stationarity/cross} gives the bound
		\begin{align*}
			&n\Em\left[X_s^{n-1} \xi(X)_s\right]
			\le 
			n C_n \sup_{u\le s}\Em[X_u ^n].
		\end{align*}
		Combining this with \eqref{equation/1/proofs/1/stationarity/pqUI/E_n(t)} and \eqref{equation/1/proofs/1/stationarity/pqUI/bound_on_eta_term} 
		and writing $\bar E_n(s) = \sup_{u\le s} E(u)$,
		we have 
		\begin{align*}
			E_n(t)\le E_n(0)
			- \lambda'_n
			\int_0^t E_n(s) ds
			 +  \int_0^t\left( a_n+ \lambda_n ''\bar E_n(s)\right) ds,
		\end{align*}
		where the constants $\lambda'_n$ and $\lambda_n''$ are given by $\lambda_n':  =  \frac{1}{2}n(c_\mu - K_n + C_n )$ and $\lambda_n'' := nC_n$.
		Our assumed condition \eqref{equation/1/proofs/1/stationarity/pqUI/lambda} gives
		$\lambda'_n - \lambda''_n  = \frac{1}{2} n\lambda_n >0$.
		By Lemma \ref{lemma/1/stationarity/ItoNisio} (\ref{lemma/1/stationarity/ItoNisio/1}), 
		\begin{align*}
			E_n(t)\le  E_n(0) + \int_0^t e^{-\lambda'_n(t-s)}  \left(a_n+\lambda_n''\bar E_n(s)\right) ds. 
			\numberthis\label{equation/1/proofs/1/stationarity/pqUI/E_n(t)afterfirstGronwall}
		\end{align*}
		Since $\bar E$ is non-decreasing and $\lambda_n'>0$, an integration by parts shows 
		\begin{align*}
			\int_0^t e^{-\lambda_n'(t-s)} & \left(a_n+\lambda_n''\bar E_n(s)\right) ds
			= \frac{(1-e^{-\lambda_n' t})(a_n + \lambda_n''\bar E_n(0))}{\lambda_n'}
			+\frac{\lambda_n''}{\lambda_n'}\int_0^t \left(1-e^{-\lambda_n'(t-s)}\right)  d\bar E_n(s)
			   \\
			& = \frac{(1-e^{-\lambda_n' t})(a_n + \lambda_n''\bar E_n(0))}{\lambda_n'}
			+\frac{\lambda_n''}{\lambda_n'} (\bar E_n(t)- \bar E_n(0)) -e^{-\lambda_n't}\int_0^t e^{\lambda_n's} d\bar E_n(s).
		\end{align*}
		The last expression is an non-decreasing function of $t$.
		Hence taking supremums in \eqref{equation/1/proofs/1/stationarity/pqUI/E_n(t)afterfirstGronwall}, we have
		\begin{align*}
			\bar E_n(t): = \sup_{u\in [0,t]} E(u) 
			& \le E_n(0) + \sup_{u\in [0,t]} \int_0^u e^{-\lambda_n'(u-s)}\left(a_n+\lambda_n''\bar E_n(s)\right) ds
			   \\
			% & \le E_n(0) + \int_0^t e^{-\lambda_1(t-s)} \left(a+\lambda_2\bar E_n(s)\right) ds
			   % \\
			& \le \Em[X_0^n] +  \frac{a_n}{\lambda_n'}
			+ \lambda_n'' \int_0^t e^{-\lambda_n'(t-s)} \bar E_n(s)ds.
		\end{align*}
		By Lemma \ref{lemma/1/stationarity/ItoNisio} (\ref{lemma/1/stationarity/ItoNisio/2}), since $\lambda_n'>\lambda_n''>0$ and $\lambda_n' - \lambda_n'' = \frac{1}{2}n\lambda_n$ for all $t\ge 0$, we have
		\begin{align*}
			\bar E_n(t)
			\le \left(\Em[X_0^n] + \frac{a_n}{\lambda_n'}\right) \frac{\lambda_n'}{\lambda_n'-\lambda_n''}
			= \frac{2a_n/n}{c_\mu - K_n - C_n}  + \Em[X_0^n]\frac{ c_\mu - K_n + C_n }{c_\mu - K_n - C_n}  <\infty.
		\end{align*}
		The Theorem follows immediately.
\end{proof}

\begin{remark*}
	The above arguments are partly adapted from \cite{BaoYinYuanEtAl2014}, but we expand on their arguments and derive explicit bounds and constants to have explicit bounds for the first and second moments. 
\end{remark*}

		%!TEX root = ../main.tex

	%%%%%%%%%%%%%%%%%%% FIRST MOMENT $$$$$$$$
	\subsection{Moment processes}\label{section/proofs/mean}
	\begin{proof}[Proof of Theorem \ref{theorem/1/moments/mean}]\pf{(\ref{theorem/1/moments/meanFDE}).}
		Since $X\in \cS^2$, the stochastic integral process $X \cdot \td S$ is a true martingale. 
		Taking expectation of the equation \eqref{equation/1/semimartingale/XSDDE}, we get 
		\begin{align*}
			m(t) & =  m(0) + \int_{0}^t \Big(\eta - c_\mu m(s) + \Em[\xi(X)_s]\Big)ds 
			+ \kappa_2c_\nu\int_0^t m(s) ds
			   \\
			& = m(0) + \int_{0}^t \left(\eta - c_0 m(s) 
						+ \int_{s-p}^s f_\mu(u-s) m(u) du + \kappa_2\int_{s-q}^s f_\nu(u-s) m(u) du\right)ds.
		\end{align*}
		Recalling the definitions of $f$ and $c_0$ before Theorem \ref{theorem/1/moments/mean}, we have the integral equation
		\begin{align*}
			m(t)& = m(0) + \int_{0}^t \left(\eta - c_0 m(s) 
						+ \int_{-r}^0 f(u) m(s+u) du \right)ds.
		\end{align*}
		From Theorem \ref{theorem/1/existence}, we know that the function $t\mapsto m(t)$ is c\`adl\`ag and hence locally bounded. Since $f$ is integrable, for any $t_1\le t_2$, by the dominated convergence theorem, 
			\begin{align*}
				&\left|\int_{-r}^0 f(u) \Big(m(t_2+u)-m(t_1+u)\Big) du \right|
				\le \int _{t_1-r}^{t_2} \big| f(u-t_2)- f(u-t_1)\big|  |m(u)| du
					\\
				&\qquad \le \left(\sup_{u\in [t_1 -r, t_2]} |m(u)|\right) \int \big|f(u-t_2)- f(u-t_1)\big| du
				 \to 0, \quad\text{as } {|t_2-t_1|\to 0},
			\end{align*}
			so the function $t\mapsto \int_{-r}^0 f(u) m(t+u) du$ is continuous.
	 	Furthermore, 
		\begin{align*}
			|m(t_2) - m(t_1)| \le |t_2 - t_1|
			\left( \eta + c_0 \sup_{u\in[t_1, t_2]} |m(u)| + \norm{f}_{L^1} \sup_{u\in[t_1-r, t_2] }|m(u)|\right),
		\end{align*}
		so the function $t\mapsto m(t)$ is continuous as well. Therefore $t\mapsto m(t)$ is continuously differentiable and the differential equation follows.

		\vspace{1.2mm}\pf{(\ref{theorem/1/moments/meanRE}).} The proof is adapted from Section 6.1 of \citet{HaleLunel2013}.
		Put $M:= \eta/(c_0 - \norm{f}_{L^1})$ and $\td m := m - M$ and $\td \phi := \phi - M$, then clearly $\td m$ is the solution to the linear delay equation
		\begin{align*}
			 \td m'(t)& = -c_0 \td m(t) + \int_{-r}^0  \td m(t+u) f(u) du,\quad t\in[0,\infty)
			\numberthis\label{equation/proofs/1/moments/Xnorm_mean_fde},
		\end{align*}
		with initial condition $\td m = \td \phi$ on $[-r,0]$.
		With $\zeta$ defined in \eqref{equation/1/moments/meanNBV}, we can rewrite \eqref{equation/proofs/1/moments/Xnorm_mean_fde} into
		\begin{align*}
			\td m'(t) =  \int_0^r \td m(t-u) d \zeta(u). 
		\end{align*}
		For $0\le t\le r$, we can separate the initial condition in \eqref{equation/proofs/1/moments/Xnorm_mean_fde} to obtain
		\begin{align}
			\td m'(t) = \int_0^t  \td m(t-u) f(u)du+ \int_t^r  \td \phi(t-u)f(u)du.\label{equation/proofs/1/moments/Xnorm_mean_separateinitial}
		\end{align}
		Now, since $\zeta$ is by construction constant for $t\ge r$, \eqref{equation/proofs/1/moments/Xnorm_mean_separateinitial} holds for $t>r$ also. Integrating by parts, we obtain a renewal equation for $\td m'$:
		\begin{align}
			\td m'(t) & = \int_0^t  \td m'(t-u) \zeta(u)du + g(t), \quad t\in[0,\infty),\label{equation/proofs/1/moments/Xnorm'_RE}
		\end{align}
		with initial condition $\td m(0)=\td \phi(0)$, where $g(t):=\zeta(t)\td m(0) +\int_t^r  \td \phi(t-u) f(u)du$.
		Integrating \eqref{equation/proofs/1/moments/Xnorm'_RE} and changing the order of integration, we obtain
		\begin{align*}
			&\td m(t)- \td m(0)  =  \int_0^t \int_0^s \zeta(u)  \td m'(s-u) du\ ds + \int_0^t g(s) ds
				\\
			&\qquad= \int_0^t \zeta(u) \int_u^t   \td m'(s-u) ds\ du + \int_0^t g(s) ds
			= \int_0^t \zeta(u) \td m(t-u)du -\int_0^t \zeta(u)\td m(0) du + \int_0^t g(s) ds.
		\end{align*}
		Changing variables $u\mapsto t-u$, we arrive at a renewal equation for $\td m$:
		\begin{align*}
			\td m(t) & 
				 = \int_0^t  \zeta(t-u)\td m(u) du + \td h(t), \quad t\in[0,\infty),\label{equation/proofs/1/moments/Xnorm_RE}\numberthis
		\end{align*}
		with initial condition $\td m(0)=\td \phi(0)$. The forcing function, $\td h$, given by 
		\begin{align*}
			\td h(t) :& =  \td \phi(0) -\int_0^t \zeta(u)\td \phi(0) du + \int_0^t g(s) ds
				= \td \phi(0) + \int_{-r}^0 (\zeta(t+u) - \zeta(u) )\td \phi(-u) du, \numberthis\label{equation/proofs/1/moments/Xnorm_h}
		\end{align*}
		is Lipschitz continuous on $[0,r]$ and constant for $t\ge r$ \citep[p.18]{DiekmannGilsLunelEtAl2012}. Since $\zeta(-r) = -M$, substituting $\td m=m+ \eta/\zeta(r)$ and $\td \phi=  \phi + \eta/\zeta(r)$ back into \eqref{equation/proofs/1/moments/Xnorm_RE} and \eqref{equation/proofs/1/moments/Xnorm_h} completes the computations. 
		% Indeed, \eqref{equation/proofs/1/moments/Xnorm_RE} gives 
		% \begin{align*}
		% 	m(t) + \frac{\eta}{\zeta(r)} = \int_0^t  \zeta(t-u)\left(m(u) + \frac{\eta}{\zeta(r)}\right) du + \td h(t),
		% \end{align*}
		% which implies that the forcing function $h$ in the renewal equation $m= \zeta * m + h$ is given by 
		% \begin{align*}
		% 	\eta(t) & =  -\frac{\eta}{\zeta(r)}+\frac{\eta}{\zeta(r)}\int_0^t \zeta(t-u)du
		% 		 + h(t)
		% 		\\
		% 	% & = -\frac{\eta}{\zeta(r)}+\frac{\eta}{\zeta(r)}\int_0^t \zeta(u)du
		% 	% + \left( \phi(0) + \frac{\eta}{\zeta(r)}\right) 	+ \int_{-r}^0 (\zeta(t+u) - \zeta(u) )\left( \phi(-u) +  \frac{\eta}{\zeta(r)}\right) du 
		% 	% 	\\
		% 	& =  \phi(0)+ \frac{\eta}{\zeta(r)}\int_0^t \zeta(u)du
		% 		+\int_{-r}^0 (\zeta(t+u) - \zeta(u) )\left( \phi(-u) +  \frac{\eta}{\zeta(r)}\right) du.
		% \end{align*}
	\end{proof}

	%%%%%%%%%%%%%%%%%%%%% theorem $$$$$$$$$$$$$$$$$$$$$
	\begin{proof}[Proof of Theorem \ref{theorem/1/moments/stationarymean}]
	\pf{(\ref{theorem/1/moments/stationarymean/limitingmean}).}
		Let $M$, $\td m$ and $\td \phi$ be as defined in the proof of Theorem \ref{theorem/1/moments/mean} (\ref{theorem/1/moments/meanRE}). 
		The {\it characteristic function} $\Delta$ of \eqref{equation/proofs/1/moments/Xnorm_mean_fde} defined in \eqref{equation/1/preliminaries/dde/characteristicfunction} is given by
		\begin{align*}
			\Delta(z) = z + c_0 - \int_{-r}^0 e^{z u} f(u)du. 
		\end{align*}
		It's clear from \eqref{equation/1/preliminaries/dde/asyExpansion} that if $\Delta(z)$ is root free in the right half-plane $\{z|\Re z\ge 0\}$, then all solutions $\td m$ of the functional differential equation \eqref{equation/proofs/1/moments/Xnorm_mean_fde} converge to zero exponentially fast as $t\to\infty$. 

		For sufficiency, it is enough to show that $c_0 > \norm{f}_{L^1}$ implies $\Delta(z)\ne0$ for any $z$ with $\Re z\ge 0$.  Let $z = \alpha + i \beta$ where $\alpha\ge 0$. Then  the real part of $\Delta$  can be written as
		\begin{align*}
			\Re \Delta(z)& =  \alpha +c_0 - \int_{-r}^0 e^{\alpha u} \cos(\beta u) f(u)du
		\end{align*}
		Since $e^{\alpha u}$ and $\cos(\beta u)$ are no greater than 1 on $[-r,0]$, we have 
		$\Re \Delta(z)	\ge \alpha + c_0 - \int_{-r}^0 f(u) du >0,$
		whenever $c_0 > \norm{f}_{L^1}$, so $\Delta$ is root free on $\{z|\Re z\ge 0\}$.
		For necessity, the expansion \eqref{equation/1/preliminaries/dde/asyExpansion} implies that 0 is the only possible limit of $\td m(t)$, which gives the uniqueness of $m$ as a limiting mean. Since we require this limit to be positive, necessarily we require $c_0>\norm{f}_{L^1}$.

	%%%%%%%%%%%%%%%%%%%%%%%%%%%%%%%%%%%%%%%%%%
		\vspace{2mm}
		\pf{(\ref{theorem/1/moments/stationarymean/stationarymean}).}
		Suppose that $ \phi\equiv M$ where $M$ is defined in \eqref{equation/1/moments/Mx}, and assume $c_0>\norm{f}_{L^1}$ so that $M>0$. Then $\td \phi$ is identically zero on $[-r,0]$, and the function $h$ defined in \eqref{equation/proofs/1/moments/Xnorm_h} is identically zero on $[-r,\infty)$. From \eqref{equation/proofs/1/moments/Xnorm_RE}, the centered mean process $\td m(\cdot)$ satisfies satisfies the homogeneous renewal equation 
		\begin{align*}
			\td m(t) = \int_0^t \zeta(u) \td m(t-u) du.
		\end{align*}
		Applying the representation in Theorem 2.12 of \citet{DiekmannGilsLunelEtAl2012} shows that the only solution to this renewal equation is $\td m(t)=0$ for all $t\in[0,\infty)$. This gives $m(t)= M$ for all $t\in[-r,\infty)$.
		Conversely, suppose that for all  $t\in[0,\infty)$, $m(t)=M$ for some positive $M$. Then \eqref{equation/1/moments/meanFDE} gives
		$0= \eta + M \left(-c_0+ \norm{f}_{L^1}\right)$,
		which implies that $M$ is uniquely given by \eqref{equation/1/moments/Mx} and $c_0> \norm{f}_{L^1}$. Recall that the delay equation \eqref{equation/1/moments/meanFDE} has a unique solution once the initial condition $ \phi$ is fixed. Therefore the solution $m \equiv M$ for all $t\ge 0$ then corresponds uniquely to the initial condition $\phi\equiv M$ on $[-r,0]$, and the proof is complete. 
	\end{proof}

%%%%%%%%%%%%%%%%%%%%%%%%%%%%%%%%%%%%%%%%%%%%%%%

%%%%%%%%%%%%%%%%%%%%%%%%%%%%%%%%%%%%%%%%%%%%%%%
\begin{proof}[Proof of Theorem \ref{theorem/1/moments/weakdependence}]
	Let $Z$ be an $\cF_t$ measurable random variable with $\Em[Z^2 X^2_t]<\infty$ for any $t>0$. Since $X$ has finite variations, for any $t>0$ and  $u>r$, we have
	\begin{align*}
		 X_{t+u} & =  Z X_{t+r} + \int_{t+r+}^{t+u} Z dX_s
		= Z X_{t+r} + \int_{t+r+}^{t+u} Z
		\left\{\Big(\eta - c_0X_s + \xi(X)_s \Big)ds + c_\nu X_{s-} d\td S_s
		\right\}.
	\end{align*}
	Taking expectations and using Fubini's theorem gives
	\begin{align*}
		\Em[Z X_{t+u}] 
		& = \Em\left[Z X_{t+r}\right] 
		+ \eta (u-r) \Em[Z] - c_0 \int_{t+r}^{t+u} \Em[Z X_s] ds 
		+ \int_{t+r}^{t+u} \int_{s-r}^s \Em[Z X_u]f(u-s) du ds
			\\
		& \hspace{1cm} + \int_{t+r}^{t+u} \Em\left[Z\int_{s-q+}^s X_{u-}f_\nu(u-s) d\td S_u \right]ds
		+ c_\nu \Em\left[\int_{t+r+}^{t+u} Z X_{s-} d\td S_s\right].
	\end{align*}
	Since $Z$ is $\cF_t$ measurable and hence $\cF_{s-q+}$ measurable for any $s\ge t+r$, the two stochastic integrals in the last expression  have zero expectation.
	Therefore
	\begin{align*}
		\Em[Z X_{t+u}] 
		& = \Em\left[Z X_{t+r}\right] 
		+\eta (u-r) \Em[Z] - c_0 \int_{t+r}^{t+u} \Em[Z X_s] ds
		+ \int_{t+r}^{t+u} \int_{-r}^0 \Em[Z X_{s+u}]f(u) du ds,
	\end{align*}
	from which we obtain a functional differential equation,
	\begin{align*}
		\frac{d}{du}\Em[Z X_{t+u}]  = \eta \Em[Z] - c_0 \Em[Z X_{t+u}] + \int_{-r}^0 \Em[Z X_{u+u}] f(u) du ds.
	\end{align*}
	Since we assumed $c_0>\norm{f}_{L^1}$, by the same argument as in the proof of Theorem \ref{theorem/1/moments/stationarymean} (\ref{theorem/1/moments/stationarymean/stationarymean}), we have
	\begin{align*}
		\Em[Z X_{t+u}]  \to \frac{\eta \Em[Z]}{c_0 - \norm{f}_{L^1}} = M\Em[Z] 
	\end{align*}
	exponentially fast.
\end{proof}

% %%%%%%%%%%%%%%%%%%%%%%%%%%%%%%%%%%%%%%%%%%%%%%%
% \begin{proof}[Proof of Corollary \ref{corollary/1/moments/weakdependence}]
% 	Follows directly from Theorem \ref{theorem/1/moments/weakdependence} by taking $Z = X_t$. 
% \end{proof}

		%!TEX root = ../main.tex
% \subsection{Price process}
\begin{proof}[Proof of Theorem \ref{theorem/1/moments/price}]
% \pf{(\ref{theorem/1/moments/price/stationaryincrements})} and \pf{(\ref{theorem/1/moments/price/statioanrycovariance})} are obvious from definition.

%%%%%%%%%%%%%%%%%%%%%%%%%%%%%%%%%%%%%%%%%%%%%%%%%%%%%%%%%%%
\pf{(\ref{theorem/1/moments/price/statioanrycovariance})}
	Since $\td Y_t = \int_{t-1+}^{t} \rt{X_{s-}} dL_s$ and $L$ has zero mean, we have
	$\Em[\td Y_t]=0$
	and $\Cov(\td Y_t, \td Y_{t+u}) = \kappa_2\int \ind_{[t-1, t]}(s)\ind_{[t+u-1, t+u]}(s)\Em[ X_s]  ds=\kappa_2 M (1-u)_+$.

%%%%%%%%%%%%%%%%%%%%%%%%%%%%%%%%%%%%%%%%%%%%
\vspace{2mm}
\pf{(\ref{theorem/1/moments/price/squaredreturns}).}
	Write $\kappa_1:= \int_{\Rm_0} z \Pi_L(dz)$. Since 
	$dY_t = -\kappa_1 \rt{X_{t}} dt + \int_{\Rm_0} \sqrt{X_{t-}}z \td N(dz, dt)$, by Ito's lemma, it holds that
	$dY_t^2  = -2Y_{t} \kappa_1 \rt{X_{t}} dt +  Y_t^2 - Y_{t-}^2$,
	where 
	\begin{align*}
		Y_t^2 -Y_{t-}^2 =\int_{\Rm_0} \left((Y_{t-} + \rt{X_{t-}}z )^2  -Y_{t-}^2\right) N(dz,dt)
		= \int_{\Rm_0} \left( 2 Y_{t-}\rt{X_{t-}}z + X_{t-}z^2 \right) N(dz,dt).
	\end{align*}
	Then, since $\td Y_{t+u}^2 = (Y_{t+u} - Y_{t+u-1})^2 = Y_{t+u}^2 - Y_{t+u-1}^2 - 2Y_{t+u-1}(Y_{t+u} - Y_{t+u-1})$, we have
	\begin{align*}
		\td Y_{t+u}^2 & = \int_{t+u-1+}^{t+u} 2Y_{s-} \sqrt{X_{s-}}  d L_s
		+ \int_{t+u-1+}^{t+u}\int_{\Rm_0} X_{t-} z^2 N(dz,dt)
		- 2Y_{t+u-1}\int_{t+u-1+}^{t+u} \sqrt{X_{s-}}dL_s.
	\end{align*}	
	Now suppose $u>1$ so that $\td Y_t$ is $\cF_{t+u-1}$ measurable. Taking expectations, we obtain
	\begin{align*}
		\Em[\td Y_t^2 \td Y_{t+u}^2] = \kappa_2\int_{t+u-1}^{t+u} \Em[\td Y_t^2 X_{s}] ds. 
	\end{align*}
	% By Theorem \ref{theorem/1/moments/weakdependence} and Theorem \ref{theorem/1/moments/price} (\ref{theorem/1/moments/price/statioanrycovariance}), $\Em[\td Y_{t}^2X_u]\to \kappa_2 M^2$ as $u\to \infty$. That is, for any $\epsilon>0$, there exists $T>0$ such that  $\left|\Em[\td Y_{t}^2X_u]- \kappa_2 M^2\right|<\epsilon$ whenever $u>T$. Take $u>T$, then 
	% \begin{align*}
	% 	&\left|\kappa_2\int_{t+u-1}^{t+u} \Em[\td Y_t^2 X_s] ds  - \kappa_2^2 M^2 \right|
	% 	 \le \kappa_2 \int_{t+u-1}^{t+u}\left| \Em[\td Y_t^2 X_s]  - \kappa_2 M^2 \right| ds
	% 	 \le \kappa_2 \epsilon,
	% \end{align*}
	% which finishes the proof.
	By Theorem \ref{theorem/1/moments/weakdependence} and Theorem \ref{theorem/1/moments/price} (\ref{theorem/1/moments/price/statioanrycovariance}), $\Em[\td Y_{t}^2X_u]\to \kappa_2 M^2$ exponentially fast as $u\to \infty$, i.e. there exists constants $C$ and $T,\lambda>0$ such that 
	$\left|\Em[\td Y_{t}^2X_u]- \kappa_2 M^2\right| \le C e^{-\lambda u},$ for all $u>T$. 
	Therefore
	\begin{align*}
		&\left|\Em[\td Y_t^2 \td Y_{t+u}^2]  - \kappa_2^2 M^2 \right|
		 \le \kappa_2 \int_{t+u-1}^{t+u}\left| \Em[\td Y_t^2 X_s]  - \kappa_2 M^2 \right| ds
		  \le \kappa_2 \int_{t+u-1}^{t+u} C e^{-\lambda s} ds= \frac{\kappa_2 C e^{1- \lambda t}}{\lambda}e^{-\lambda u},
	\end{align*}
	for all $u>T$, which finishes the proof.
\end{proof}

\section*{Acknowledgment}
The author's research is supported financially by the Australian Government Research Training Program (AGRTP). 
The author wishes to  thank Prof. Boris Buchmann and Prof. Ross Maller for their help and support, as well and Prof. William Dunsmuir for all the insightful discussions. 
% \appendix
	% \input{WIP/WIP}
	% 	\input{WIP/WIP_stationarity}
	% 	\input{WIP/WIP_secondmoment}
		% \input{WIP/WIP_simulations}

		% \input{WIP/WIP_SDE}
		% \input{WIP/WIP_WIP1}
		% \input{WIP/WIP_WIP2}

\bibliographystyle{plainnat1}
\bibliography{../../Ref/Zotero/CDGARCHpaper}

\begin{thebibliography}{1}
\providecommand{\natexlab}[1]{#1}
\providecommand{\url}[1]{\texttt{#1}}
\expandafter\ifx\csname urlstyle\endcsname\relax
  \providecommand{\doi}[1]{doi: #1}\else
  \providecommand{\doi}{doi: \begingroup \urlstyle{rm}\Url}\fi

\bibitem[Di~Nunno et~al.(2009)Di~Nunno, {\O}ksendal, and
  Proske]{DiNunnoOksendalProske2009}
Giulia Di~Nunno, Bernt {\O}ksendal, and Frank Proske.
\newblock \emph{Malliavin {{Calculus}} for {{L{\'e}vy Processes}} with
  {{Applications}} to {{Finance}}}.
\newblock {Springer Berlin Heidelberg}, Berlin, Heidelberg, 2009.

\end{thebibliography}


\begin{thebibliography}{42}
\providecommand{\natexlab}[1]{#1}
\providecommand{\url}[1]{\texttt{#1}}
\expandafter\ifx\csname urlstyle\endcsname\relax
  \providecommand{\doi}[1]{doi: #1}\else
  \providecommand{\doi}{doi: \begingroup \urlstyle{rm}\Url}\fi

\bibitem[Applebaum(2009)]{Applebaum2009}
David Applebaum.
\newblock \emph{L{\'e}vy Processes and Stochastic Calculus}.
\newblock {Cambridge University Press}, Cambridge; New York, 2009.

\bibitem[Bao et~al.(2014)Bao, Yin, Yuan, and Wang]{BaoYinYuanEtAl2014}
Jianhai Bao, George Yin, Chenggui Yuan, and Le~Yi Wang.
\newblock Exponential ergodicity for retarded stochastic differential
  equations.
\newblock \emph{Applicable Analysis}, 93\penalty0 (11):\penalty0 2330--2349,
  2014.
\newblock \doi{10.1080/00036811.2014.952291}.

\bibitem[Bao et~al.(2016)Bao, Yin, and Yuan]{BaoYinYuan2016}
Jianhai Bao, George Yin, and Chenggui Yuan.
\newblock \emph{Asymptotic {{Analysis}} for {{Functional Stochastic
  Differential Equations}}}.
\newblock SpringerBriefs in Mathematics. {Springer International Publishing},
  2016.

\bibitem[Barndorff-Nielsen et~al.(2012)Barndorff-Nielsen, Benth, and
  Veraart]{Barndorff-NielsenBenthVeraart2012}
Ole~E. Barndorff-Nielsen, Fred~Espen Benth, and Almut~ED Veraart.
\newblock Recent advances in ambit stochastics with a view towards
  tempo-spatial stochastic volatility/intermittency.
\newblock \emph{arXiv preprint arXiv:1210.1354}, 2012.

\bibitem[Barndorff-Nielsen et~al.(2013)Barndorff-Nielsen, Benth, Veraart, and
  {others}]{Barndorff-NielsenBenthVeraartEtAl2013}
Ole~E. Barndorff-Nielsen, Fred~Espen Benth, Almut~ED Veraart, and {others}.
\newblock Modelling energy spot prices by volatility modulated
  {{L{\'e}vy}}-driven {{Volterra}} processes.
\newblock \emph{Bernoulli}, 19\penalty0 (3):\penalty0 803--845, 2013.

\bibitem[Benassi et~al.(2004)Benassi, Cohen, and Istas]{BenassiCohenIstas2004}
Albert Benassi, Serge Cohen, and Jacques Istas.
\newblock On roughness indices for fractional fields.
\newblock \emph{Bernoulli}, pages 357--373, 2004.

\bibitem[Bender and Marquardt(2008)]{BenderMarquardt2008}
Christian Bender and Tina Marquardt.
\newblock Stochastic calculus for convoluted {{L{\'e}vy}} processes.
\newblock \emph{Bernoulli}, 14\penalty0 (2):\penalty0 499--518, 2008.
\newblock \doi{10.3150/07-BEJ115}.

\bibitem[Billingsley(2013)]{Billingsley2013}
Patrick Billingsley.
\newblock \emph{Convergence of {{Probability Measures}}}.
\newblock {John Wiley \& Sons}, June 2013.

\bibitem[Bollerslev(1986)]{Bollerslev1986}
Tim Bollerslev.
\newblock Generalized autoregressive conditional heteroskedasticity.
\newblock \emph{Journal of Econometrics}, 31\penalty0 (3):\penalty0 307--327,
  1986.

\bibitem[Bougerol and Picard(1992)]{BougerolPicard1992}
Philippe Bougerol and Nico Picard.
\newblock Stationarity of {{GARCH}} processes and of some nonnegative time
  series.
\newblock \emph{Journal of econometrics}, 52\penalty0 (1-2):\penalty0 115--127,
  1992.

\bibitem[Buchmann and M{\"u}ller(2012)]{BuchmannMuller2012}
Boris Buchmann and Gernot M{\"u}ller.
\newblock Limit experiments of {{GARCH}}.
\newblock \emph{Bernoulli}, 18\penalty0 (1):\penalty0 64--99, 2012.
\newblock \doi{10.3150/10-BEJ328}.

\bibitem[Corradi(2000)]{Corradi2000}
Valentina Corradi.
\newblock Reconsidering the continuous time limit of the {{GARCH}}(1,1)
  process.
\newblock \emph{Journal of Econometrics}, 96\penalty0 (1):\penalty0 145--153,
  May 2000.
\newblock \doi{10.1016/S0304-4076(99)00053-6}.

\bibitem[Diekmann et~al.(2012)Diekmann, van Gils, Lunel, and
  Walther]{DiekmannGilsLunelEtAl2012}
Odo Diekmann, Stephan~A. van Gils, Sjoerd M.~V. Lunel, and Hans-Otto Walther.
\newblock \emph{Delay {{Equations}}: {{Functional}}-, {{Complex}}-, and
  {{Nonlinear Analysis}}}.
\newblock {Springer Science \& Business Media}, 2012.

\bibitem[Duan(1997)]{Duan1997}
JC~Duan.
\newblock Augmented {{GARCH}} (p, q) process and its diffusion limit.
\newblock \emph{Journal of Econometrics}, 1997.

\bibitem[Dunsmuir et~al.({\natexlab{a}})Dunsmuir, Goldys, and
  Tran]{DunsmuirGoldysTran}
William Dunsmuir, Ben Goldys, and Cuong~Viet Tran.
\newblock On {{Limits}} of {{Continuous Time Bilinear Processes}} (working
  paper).
\newblock {\natexlab{a}}.

\bibitem[Dunsmuir et~al.({\natexlab{b}})Dunsmuir, Goldys, and
  Tran]{DunsmuirGoldysTrana}
William Dunsmuir, Ben Goldys, and Cuong~Viet Tran.
\newblock Stochastic {{Delay Differential Equations}} as {{Weak Limits}} of
  {{GARCH}} processes (working paper).
\newblock {\natexlab{b}}.

\bibitem[Engle(1982)]{Engle1982}
Robert~F. Engle.
\newblock Autoregressive {{Conditional Heteroscedasticity}} with {{Estimates}}
  of the {{Variance}} of {{United Kingdom Inflation}}.
\newblock \emph{Econometrica}, 1982.

\bibitem[Hairer et~al.(2011)Hairer, Mattingly, and
  Scheutzow]{HairerMattinglyScheutzow2011}
M.~Hairer, J.~C. Mattingly, and M.~Scheutzow.
\newblock Asymptotic coupling and a general form of {{Harris}}' theorem with
  applications to stochastic delay equations.
\newblock \emph{Probability Theory and Related Fields}, 149\penalty0
  (1-2):\penalty0 223--259, 2011.
\newblock \doi{10.1007/s00440-009-0250-6}.

\bibitem[Hale and Lunel(2013)]{HaleLunel2013}
Jack Hale and Sjoerd M~Verduyn Lunel.
\newblock \emph{Introduction to {{Functional Differential Equations}}}.
\newblock {Springer Science \& Business Media}, 2013.

\bibitem[Haug et~al.(2007)Haug, Kl{\"u}ppelberg, Lindner, and
  Zapp]{HaugKluppelbergLindnerEtAl2007}
S.~Haug, C.~Kl{\"u}ppelberg, A.~Lindner, and M.~Zapp.
\newblock Method of moment estimation in the {{COGARCH}}(1,1) model.
\newblock \emph{The Econometrics Journal}, 10\penalty0 (2):\penalty0 320--341,
  2007.
\newblock \doi{10.1111/j.1368-423X.2007.00210.x}.

\bibitem[Ito and Nisio(1964)]{ItoNisio1964}
K.~Ito and Makiko Nisio.
\newblock On stationary solutions of a stochastic differential equation.
\newblock \emph{J. Math. Kyoto Univ}, 4\penalty0 (1):\penalty0 1--75, 1964.

\bibitem[Jaber et~al.(2017)Jaber, Larsson, and Pulido]{JaberLarssonPulido2017}
Eduardo~Abi Jaber, Martin Larsson, and Sergio Pulido.
\newblock Affine {{Volterra}} processes.
\newblock \emph{arXiv:1708.08796}, 2017.

\bibitem[Jacod and Shiryaev(2013)]{JacodShiryaev2013}
Jean Jacod and Albert Shiryaev.
\newblock \emph{Limit {{Theorems For Stochastic Processes}}}.
\newblock {Springer}, 2nd ed edition, 2013.

\bibitem[Kallsen and Taqqu(1998)]{KallsenTaqqu1998}
Jan Kallsen and Murad~S. Taqqu.
\newblock Option {{Pricing}} in {{ARCH}}-type {{Models}}.
\newblock \emph{Mathematical Finance}, 8\penalty0 (1):\penalty0 13--26, 1998.

\bibitem[Kallsen and Vesenmayer(2009)]{KallsenVesenmayer2009}
Jan Kallsen and Bernhard Vesenmayer.
\newblock {{COGARCH}} as a continuous-time limit of {{GARCH}}(1,1).
\newblock \emph{Stochastic Processes and their Applications}, 119\penalty0
  (1):\penalty0 74--98, 2009.

\bibitem[Klenke(2013)]{Klenke2013}
Achim Klenke.
\newblock \emph{Probability {{Theory}}: {{A Comprehensive Course}}}.
\newblock {Springer Science \& Business Media}, 2013.

\bibitem[Kl{\"u}ppelberg et~al.(2004)Kl{\"u}ppelberg, Lindner, and
  Maller]{KluppelbergLindnerMaller2004}
Claudia Kl{\"u}ppelberg, Alexander Lindner, and Ross~A. Maller.
\newblock A continuous-time {{GARCH}} process driven by a {{L{\'e}vy}} process
  stationarity and second-order behaviour.
\newblock \emph{Journal of Applied Probability}, 2004.

\bibitem[Kl{\"u}ppelberg et~al.(2006)Kl{\"u}ppelberg, Lindner, and
  Maller]{KluppelbergLindnerMaller2006}
Claudia Kl{\"u}ppelberg, Alexander Lindner, and Ross Maller.
\newblock Continuous time volatility modelling: {{COGARCH}} versus
  {{Ornstein}}\textendash{}{{Uhlenbeck}} models.
\newblock In \emph{From Stochastic Calculus to Mathematical Finance}, pages
  393--419. {Springer}, 2006.

\bibitem[Kl{\"u}ppelberg et~al.(2010)Kl{\"u}ppelberg, Maller, and
  Szimayer]{KluppelbergMallerSzimayer2010}
Claudia Kl{\"u}ppelberg, Ross Maller, and Alexander Szimayer.
\newblock The {{COGARCH}}: A review, with news on option pricing and
  statistical inference.
\newblock 2010.

\bibitem[Lindner and Maller(2005)]{LindnerMaller2005}
Alexander Lindner and Ross Maller.
\newblock L{\'e}vy integrals and the stationarity of generalised
  {{Ornstein}}\textendash{}{{Uhlenbeck}} processes.
\newblock \emph{Stochastic Processes and their Applications}, 115\penalty0
  (10):\penalty0 1701--1722, October 2005.
\newblock \doi{10.1016/j.spa.2005.05.004}.

\bibitem[Lindner(2009{\natexlab{a}})]{Lindner2009}
Alexander~M. Lindner.
\newblock Continuous time approximations to {{GARCH}} and stochastic volatility
  models.
\newblock \emph{Handbook of Financial Time Series}, pages 481--496,
  2009{\natexlab{a}}.

\bibitem[Lindner(2009{\natexlab{b}})]{Lindner2009a}
Alexander~M. Lindner.
\newblock Stationarity, mixing, distributional properties and moments of
  {{GARCH}} (p, q)\textendash{}processes.
\newblock In \emph{Handbook of Financial Time Series}, pages 43--69.
  {Springer}, 2009{\natexlab{b}}.

\bibitem[Lorenz(2006)]{Lorenz2006}
Robert Lorenz.
\newblock \emph{Weak {{Approximation}} of {{Stochastic Delay Differential
  Equations}} with {{Bounded Memory}} by {{Discrete Time Series}}}.
\newblock Phd {{Thesis}}, 2006.

\bibitem[Maller et~al.(2008)Maller, M{\"u}ller, and
  Szimayer]{MallerMullerSzimayer2008}
Ross~A. Maller, Gernot M{\"u}ller, and Alex Szimayer.
\newblock {{GARCH}} modelling in continuous time for irregularly spaced time
  series data.
\newblock \emph{Bernoulli}, 14\penalty0 (2):\penalty0 519--542, 2008.

\bibitem[Marquardt(2006)]{Marquardt2006}
Tina Marquardt.
\newblock Fractional {{L{\'e}vy}} processes with an application to long memory
  moving average processes.
\newblock \emph{Bernoulli}, 12\penalty0 (6), 2006.

\bibitem[M{\"u}ller(2010)]{Muller2010}
Gernot M{\"u}ller.
\newblock {{MCMC Estimation}} of the {{COGARCH}}(1,1) {{Model}}.
\newblock \emph{Journal of Financial Econometrics}, 8\penalty0 (4):\penalty0
  481--510, October 2010.
\newblock \doi{10.1093/jjfinec/nbq029}.

\bibitem[Nelson(1990)]{Nelson1990}
Daniel~B. Nelson.
\newblock {{ARCH}} models as diffusion approximations.
\newblock \emph{Journal of Econometrics}, 45\penalty0 (1):\penalty0 7--38,
  1990.

\bibitem[Podolskij(2015)]{Podolskij2015}
Mark Podolskij.
\newblock Ambit fields survey and new challenges.
\newblock In \emph{{{XI Symposium}} on {{Probability}} and {{Stochastic
  Processes}}}. {Springer}, 2015.

\bibitem[Protter(2004)]{Protter2004}
Philip~E. Protter.
\newblock \emph{Stochastic Integration and Differential Equations}.
\newblock Number~21 in Applications of mathematics. {Springer}, Berlin ; New
  York, 2nd ed edition, 2004.

\bibitem[Rei{\ss} et~al.(2006)Rei{\ss}, Riedle, and {van
  Gaans}]{ReissRiedlevanGaans2006}
M.~Rei{\ss}, M.~Riedle, and O.~{van Gaans}.
\newblock Delay differential equations driven by {{L{\'e}vy}} processes:
  {{Stationarity}} and {{Feller}} properties.
\newblock \emph{Stochastic Processes and their Applications}, 116\penalty0
  (10):\penalty0 1409--1432, 2006.
\newblock \doi{10.1016/j.spa.2006.03.002}.

\bibitem[Tran(2013)]{Tran2013}
Cuong~Viet Tran.
\newblock \emph{Convergence of {{Time Series Processes}} to {{Continuous Time
  Limits}}}.
\newblock Phd {{Thesis}}, University of New South Wales, 2013.

\bibitem[Wang(2002)]{Wang2002}
Yazhen Wang.
\newblock Asymptotic nonequivalence of {{GARCH}} models and diffusions.
\newblock \emph{The Annals of Statistics}, 30\penalty0 (3):\penalty0 754--783,
  June 2002.
\newblock \doi{10.1214/aos/1028674841}.

\end{thebibliography}
\end{document}